\title[QUASISYMMETRIC EMBEDDING OF THE INTEGER SET...]{QUASISYMMETRIC EMBEDDING OF THE INTEGER SET AND ITS QUASICONFORMAL EXTENSION}
\author[H. FUJINO]{HIROKI FUJINO}
\address{Graduate~School~of~Mathematics, Nagoya~University, Furo-cho Chikusa-ku Nagoya 464-8602, Japan}
\email{m12040w@math.nagoya-u.ac.jp}
\subjclass[2010]{Primary~51M04, Secondary~51M05.}
\keywords{quasiconformal mapping, quasisymmetric mapping.}
\date{\today}
  \def\@seccntformat#1{\csname the#1\endcsname.\quad}
\newtheorem{defthm}{Theorem}[section]
\newtheorem{alpthm}{Theorem}   
\newtheorem{empthm}{Theorem}   
\newcounter{claim}{}
\newtheorem{defclaim}{Claim}[claim] 
\newtheorem{defprop}[defthm]{Proposition}
\newtheorem{defdef}[defthm]{Definition}
\newtheorem{deflem}[defthm]{Lemma}
\newtheorem{defex}[defthm]{Example}
\newtheorem{defremark}[defthm]{Remark}
\newcommand{\re}{{\rm Re}}
\newcommand{\im}{{\rm Im}}
\begin{document}

\begin{abstract}
	We prove that an injection from the integer set into the real line
	admits a quasiconformal extension to the complex plane if and only if
	it is quasisymmetric.
\end{abstract}

\maketitle　


\section{Introduction}   
\label{introduction}
    
Let $E\subset \mathbb{C}$ be a discrete subset. 
In \cite{fujino2}, to study the Teichm\"uller space of
the punctured plane $\mathbb{C}\setminus \mathbb{Z}$,
the author gave some criteria for
$\mathbb{C}\setminus E$ to be quasiconformally equivalent to
$\mathbb{C}\setminus \mathbb{Z}$ (that is, there exists a quasiconformal
mapping $F:\mathbb{C}\rightarrow \mathbb{C}$ such that $F(E)=\mathbb{Z}$).
In this paper, furthermore, we investigate the correspondences between
$E$ and $\mathbb{Z}$ which are the restrictions of global quasiconformal mappings 
$F:\mathbb{C}\rightarrow \mathbb{C}$ such that $F(E)=\mathbb{Z}$.
A motivation of this attempt is to
study the Teichm\"uller modular group of $\mathbb{C}\setminus \mathbb{Z}$
and its action.

Let $\eta:[0,+\infty )\rightarrow [0,+\infty )$ be a homeomorphism and
$f:X\rightarrow \mathbb{R}^n$ be an $\eta-$quasisymmetric embedding from
a subset $X\subset \mathbb{R}^n$ into $\mathbb{R}^n$.
The theory of the quasisymmetry and its quasiconformal extension originated
from the well known study for $X=\mathbb{R}$ and $n=1$ 
by Beurling--Ahlfors \cite{beurling1}.
They proved that a homeomorphism $f:\mathbb{R}\rightarrow
\mathbb{R}$ admits a quasiconformal extension $F:\mathbb{C}\rightarrow
\mathbb{C}$ if and only if $f$ is quasisymmetric.
This result enables us to treat the universal Teichm\"uller space, the Teichm\"uller
space of the unit disk, as the space of all orientation preserving 
quasisymmetric homeomorphisms of the unit circle which fix given three points.
Later, V\"ais\"al\"a posed the following question in \cite[Question 8]{vaisala4}
which is still open;
can $f$ be extended to a $K-$quasiconformal mapping $F:\mathbb{R}^{2n}
\rightarrow \mathbb{R}^{2n}$ with a constant $K=K(n,\eta)\geq 1$
which depends only on $n$ and $\eta$?

For example, Alestalo--V\"ais\"al\"a showed 
that if $f:X\rightarrow \mathbb{R}^n$ is $M-$ biLipschitz, then there always 
exists a $\sqrt{7}M^2-$ biLipschitz extension $F:\mathbb{R}^{2n}\rightarrow
\mathbb{R}^{2n}$ of $f$ (see \cite[Theorem 5.5]{alestalo1}). 
On the other hand, for quasisymmetric embeddings, there is an obstacle; 
Trotsenko--V\"ais\"al\"a proved in \cite[Theorem 6.6]{trotsenko1} that
if $X\subset \mathbb{R}^n$ is not relatively connected, then 
there exists a quasisymmetric embedding $f:X\rightarrow \mathbb{R}^n$
which cannot be extended to a quasisymmetric embedding 
$F:\mathbb{R}^n\rightarrow\mathbb{R}^N$ for any $N\geq n$.
Since global quasiconformal mappings $F:\mathbb{R}^{2n}\rightarrow
\mathbb{R}^{2n}$ are also quasisymmetric (see \cite[Theorem 11.14]{heinonen1}),
this fact implies that the V\"ais\"al\"a problem cannot be solved
affirmatively for general subsets $X$ even if $n=1$.


According to the recent study by Vellis \cite{vellis1}, he showed that
if $X\subset \mathbb{R}$ is $M-$relatively connected, then 
every $\eta-$quasisymmetric embedding $f:X\rightarrow \mathbb{R}^n$
can be extended to an $\eta'-$quasisymmetric embedding $F:\mathbb{R}
\rightarrow \mathbb{R}^N$, where $\eta'$ depends only on $\eta$ 
and $M$, and $N(\geq n)$ depends only on $n$, $\eta$, and $M$.
Considering the one dimensional case of the V\"ais\"al\"a problem,
it is interesting to find out whether we can choose $N=2$ uniformly
when $n=1$ in the Vellis's result.\\


Let us consider the case of $X=\mathbb{Z}$ and $n=1$. 
In this paper, we would like to give 
detailed observations on quasisymmetric embeddings $f:\mathbb{Z}\rightarrow
\mathbb{R}$, as an example of a relatively connected set
 for which the V\"ais\"al\"a problem can be solved affirmatively;

\setcounter{alpthm}{0}
\begin{alpthm}{\rm (Extensibility of quasisymmetric embeddings of
	$\mathbb{Z}$)} \label{T5.1}\\
	Every $\eta-$quasisymmetric embedding $f:\mathbb{Z}\rightarrow 
	\mathbb{R}$ admits a $K=K(\eta)$ $-$quasiconformal extension
	$F:\mathbb{C}\rightarrow \mathbb{C}$ where
	$K=K(\eta)$ is a constant depending only on $\eta$.
\end{alpthm}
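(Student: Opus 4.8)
The plan is to factor the construction into two independent steps: first extend $f$ to a quasisymmetric self-embedding of the real line whose constant depends only on $\eta$, and then feed the resulting boundary map into the one-dimensional Beurling--Ahlfors theorem (the case $X=\mathbb{R}$, $n=1$ recalled in the introduction) to obtain the quasiconformal extension $F\colon\mathbb{C}\to\mathbb{C}$. Because the Beurling--Ahlfors dilatation depends only on the quasisymmetry constant of the boundary data, the resulting $K$ will depend only on $\eta$. Thus essentially all of the difficulty is concentrated in producing an $\eta'$-quasisymmetric extension $g\colon\mathbb{R}\to\mathbb{R}$ of $f$ with $\eta'=\eta'(\eta)$.

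Before interpolating I would record the coarse structure of $f$ forced by quasisymmetry. Writing $d_n=|f(n+1)-f(n)|$, the three-point condition for the triple $(n-1,n,n+1)$ gives $\eta(1)^{-1}\le d_n/d_{n-1}\le\eta(1)$, so consecutive gaps are comparable; and for the triples $(m,m+k,m+2k)$ it gives the \emph{block comparability} $|f(m+2k)-f(m+k)|\asymp|f(m+k)-f(m)|$ with constant $\eta(1)$ for \emph{every} $k$. The latter is exactly what forbids the gaps from growing geometrically (an exponential profile $d_n\sim c^n$ is ruled out by taking $k\to\infty$, since $\eta$ is a fixed homeomorphism). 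These estimates also show that $f$ is coarsely monotone with only bounded local disorder: after possibly replacing $f$ by $-f$ there is an $N=N(\eta)$ with $f(n+N)>f(n)$ for all $n$, and the permutation $\pi$ of $\mathbb{Z}$ that sorts $f$ into increasing order displaces each integer by at most $N$ (a genuinely non‑monotone but bounded example such as a single transposition shows this step cannot be omitted). Such a bounded‑displacement permutation is realized by a $K(\eta)$-quasiconformal self‑map $\Pi$ of $\mathbb{C}$ with $\Pi|_{\mathbb{Z}}=\pi$ performed in regions of bounded geometry, so that it suffices to build the extension for the increasing map $f\circ\pi$ and then set $F=G\circ\Pi^{-1}$.

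For monotone $f$, let $g\colon\mathbb{R}\to\mathbb{R}$ be the piecewise‑affine interpolation: $g(n)=f(n)$ and $g$ affine on each $[n,n+1]$. This is an increasing homeomorphism, and by the Beurling--Ahlfors criterion it is quasisymmetric as soon as it satisfies the symmetric condition
\[
H^{-1}\le \frac{g(x+t)-g(x)}{g(x)-g(x-t)}\le H \qquad (x\in\mathbb{R},\ t>0)
\]
for a single $H=H(\eta)$. For $t$ lying inside one affine piece the ratio is $1$; for $t$ crossing one integer it is controlled by consecutive‑gap comparability. The substantive regime is $t\gtrsim 1$, where $g(x+t)-g(x)$ and $g(x)-g(x-t)$ are sums of roughly $t$ consecutive gaps over two adjacent windows, up to two fractional endpoint terms. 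Replacing $x$ and $x\pm t$ by nearby integers and applying block comparability to the two adjacent blocks of length $\lfloor t\rfloor$ bounds the ratio of the main terms by $\eta(1)$, while the fractional corrections are single gaps dominated (via consecutive‑gap comparability) by the blocks; this yields the desired $H=H(\eta)$.

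The main obstacle is precisely this last uniform estimate. The discrete hypotheses give clean comparisons only between blocks whose endpoints are integers, whereas the quasisymmetry of $g$ must hold for all real $x$ and $t$ with one constant valid \emph{simultaneously at every scale}. Reconciling the integer block comparisons with the fractional endpoint terms, and in particular guaranteeing that the constant does not degenerate as $t\to\infty$ — the exact regime in which mere consecutive‑gap comparability fails and where block comparability is indispensable — is the crux of the argument. Once this is established the two reductions combine: the disorder map $\Pi$, the monotone piecewise‑affine extension $g$, and the Beurling--Ahlfors extension of $g$ compose to the required $K(\eta)$-quasiconformal $F$.
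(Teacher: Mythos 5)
Your decomposition --- a permutation of $\mathbb{Z}$ realized by a quasiconformal self-map of $\mathbb{C}$, composed with a monotone embedding handled by a Beurling--Ahlfors type extension --- is in fact the same one the paper uses (there $f=F\circ(F^{-1}\circ f)$, with $F$ supplied by Theorem \ref{T4.1} and the automorphism $F^{-1}\circ f$ extended by Theorem \ref{T3.2}). The trouble is that the two claims by which you dispose of the permutation part are exactly where the paper's real work lies, and neither is justified. First, the bounded-displacement claim: consecutive-gap and block comparability control \emph{distances} in $E=f(\mathbb{Z})$, whereas displacement of the sorting permutation is a \emph{count} of points of $E$, and $E$ carries no a priori lower bound on its gaps at the scale $d=|f(n)-f(m)|$. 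Concretely, if you try to bound $\#\{m>n:\ f(m)<f(n)\}$ from the forward estimates, quasisymmetry separates the intermediate points only by about $d/(2\eta(N))$, while the three-point condition confines them to an interval of length $O(\eta(1)\,d)$; packing then gives $N\leq C\,\eta(1)\,\eta(N)$, which is vacuous because $\eta$ is an arbitrary homeomorphism and $\eta(N)$ may grow linearly or faster. Converting distance bounds into counting bounds requires the quasisymmetry of the \emph{inverse} map $f^{-1}$ together with a pigeonhole over the integers; this is precisely Lemma \ref{L4.2} and its Claims 1--2 in the paper. The claim you assert is true, but it is a substantive step, not a corollary of the estimates you recorded.

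Second, even granting bounded displacement, ``realized by a $K(\eta)$-quasiconformal self-map performed in regions of bounded geometry'' is not an argument, and as a mechanism it is false: a bounded-displacement permutation of $\mathbb{Z}$ need not decompose into disjoint bounded blocks, so there are no disjoint regions in which to perform it. The paper's own example (Figure \ref{sbs1}: $a_{6n}=6n$, $a_{6n+1}=6n+2$, $a_{6n+2}=6n-2$, $a_{6n+3}=6n+3$, $a_{6n+4}=6n+5$, $a_{6n+5}=6n+1$) has displacement at most $4$, yet no interval splits it, so Lemma \ref{L1} and Lemma \ref{L3} cannot be applied to it directly. One must either first apply an auxiliary quasiconformal map that re-sorts the sequence into a $(2\lambda+3)$-splittable one --- this is Theorem \ref{T1}, whose proof occupies all of Section \ref{proofT1} --- or supply a genuine substitute, e.g.\ factoring $\pi$ into $O(N)$ rounds of disjoint adjacent transpositions (odd--even transposition sort), each round realizable in disjoint unit rectangles, and composing the $O(N)$ resulting quasiconformal maps. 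Your monotone step (piecewise-affine interpolation plus Beurling--Ahlfors) is sound in outline and corresponds to what the paper imports from \cite[Theorem A]{fujino2}, though you yourself label its uniform estimate ``the crux'' without proving it. In short: the skeleton matches the paper's proof, but both load-bearing claims about the permutation part are left unproven, and the ``bounded geometry regions'' justification for the second one does not work.
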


Compared to the Beurling--Ahlfors extension theorem, the difficulty
in our case is that $f$ can change the magnitude relation.
To prove Theorem \ref{T5.1}, first, we will observe the extensibility
of quasisymmetric automorphisms $f:\mathbb{Z}\rightarrow \mathbb{Z}$
in Section \ref{automorphism} and \ref{proofT1}.
\setcounter{alpthm}{1}
\begin{alpthm}{\rm (Extensibility of quasisymmetric automorphisms of $\mathbb{Z}$)}
\label{T3.2}
	For a bijection $f:\mathbb{Z}\rightarrow \mathbb{Z}$, the following
	conditions are quantitatively equivalent;
	\begin{enumerate}
		\item $f$ is $\eta-$quasisymmetric.
		\item $\{a_n:=f(n)\}_{n\in\mathbb{Z}}$ satisfies the $\lambda-$three point condition.
		\item $f$ admits a $K-$quasiconformal extension $F:
			\mathbb{C}\rightarrow \mathbb{C}$.
	\end{enumerate}
\end{alpthm}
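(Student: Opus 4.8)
The three conditions are best organized as a cycle of implications, $(1)\Rightarrow(2)\Rightarrow(3)\Rightarrow(1)$, in which the two ``soft'' implications $(3)\Rightarrow(1)$ and $(1)\Rightarrow(2)$ follow from standard facts and direct substitution, while the constructive implication $(2)\Rightarrow(3)$ carries essentially the entire weight of the argument. Throughout I would track how each constant depends on the previous one, so that the resulting equivalence is quantitative as claimed.

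For $(3)\Rightarrow(1)$ I would invoke the fact that a global $K$-quasiconformal self-map of $\mathbb{C}$ is $\eta$-quasisymmetric with $\eta$ depending only on $K$ (\cite[Theorem 11.14]{heinonen1}); since the restriction of an $\eta$-quasisymmetric map to any subset is again $\eta$-quasisymmetric, $f=F|_{\mathbb{Z}}$ is $\eta=\eta(K)$-quasisymmetric. For $(1)\Rightarrow(2)$ I would feed carefully chosen integer triples into the quasisymmetry inequality: testing consecutive triples $(n-1,n,n+1)$ forces the adjacent gaps $|a_{n}-a_{n-1}|$ and $|a_{n+1}-a_{n}|$ to be comparable up to a factor depending only on $\eta$, which is exactly the content of the $\lambda$-three point condition, and one also tests triples straddling a change of magnitude relation to confirm the bound survives the non-monotonicity of $f$. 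Thus $\lambda$ depends only on $\eta$.

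The core is $(2)\Rightarrow(3)$: from the $\lambda$-three point condition I must manufacture an explicit $K(\lambda)$-quasiconformal $F:\mathbb{C}\to\mathbb{C}$ with $F(n)=a_n$ for every $n$. The plan is to triangulate the plane using the integer points $n$ on the real axis together with an auxiliary lattice of vertices placed above and below the axis, and to prescribe $F$ as a piecewise affine (or piecewise smooth) map sending each triangle onto a corresponding image triangle erected over the points $a_n$. The three point condition bounds the ratios of adjacent edge lengths, hence controls the shapes of the image triangles, so that the dilatation of each affine piece is bounded by a constant depending only on $\lambda$; matching the pieces along shared edges yields a globally continuous map, and the uniform dilatation bound makes it $K(\lambda)$-quasiconformal once injectivity is known.

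The principal obstacle is precisely the difficulty highlighted after Theorem \ref{T5.1}: the sequence $\{a_n\}$ need not be monotone, so the naive Beurling--Ahlfors strategy of extending into a single half plane fails, because the image strips erected over the intervals $[a_n,a_{n+1}]$ interleave and overlap wherever $f$ reverses the magnitude relation. Crucially one cannot simply reflect the offending cells, since that would break injectivity along the seams; instead I would exploit the full plane rather than a half plane, routing the image triangulation so that it \emph{folds} through $\mathbb{C}$ to absorb each reversal while remaining an orientation-preserving homeomorphism. Concretely I expect to reduce the local behaviour near each index to finitely many model patterns of consecutive signed gaps, build a bounded-dilatation injective model for each, and glue them. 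The delicate points — and what I anticipate to be the hardest part of the proof — are verifying global injectivity across the folds and checking that the dilatation remains bounded, uniformly in $\lambda$, in the transition regions where the orientation of $f$ changes.
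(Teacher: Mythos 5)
Your cycle $(1)\Rightarrow(2)\Rightarrow(3)\Rightarrow(1)$ matches the paper's organization, and your $(3)\Rightarrow(1)$ is exactly the paper's argument (global quasiconformal maps of $\mathbb{C}$ are quasisymmetric, and restriction preserves quasisymmetry). But your $(1)\Rightarrow(2)$ misreads what the three point condition is. It is not a statement about adjacent gaps: it requires $|a_n-a_m|/|a_n-a_k|\leq\lambda$ for \emph{all} triples $n<m<k$, and the correct (one-line) proof is to apply the quasisymmetry inequality to the triple $(n,m,k)$ itself, noting $|n-m|/|n-k|\leq 1$, which gives $\lambda=\eta(1)$. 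Comparability of consecutive gaps is strictly weaker. For example, the bijection that enumerates the even integers in the order $0,2,-2,4,-4,\dots$ on nonnegative indices and the odd integers in the order $1,-1,3,-3,\dots$ on negative indices has all consecutive-gap ratios at most $2$, yet the triple with $a_{n_{-1}}=2j-1$ ($n_{-1}<0$), $a_0=0$, $a_{n_1}=2j$ gives ratio $2j-1$, unbounded; indeed this sequence does not even satisfy $a_n\to\pm\infty$, which Proposition \ref{P1} shows is forced by the three point condition. So a test of consecutive triples cannot establish $(2)$.

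The genuine gap is in $(2)\Rightarrow(3)$, which you rightly identify as carrying all the weight, but for which your proposal is a statement of intent rather than an argument: the two points you defer, injectivity of the ``folds'' and uniform control of the dilatation where the magnitude relation reverses, are precisely the entire difficulty. Moreover, the reduction you hope for, to ``finitely many model patterns of consecutive signed gaps'' glued independently, cannot work as a local statement. Consider the paper's example $a_{6n}=6n$, $a_{6n+1}=6n+2$, $a_{6n+2}=6n-2$, $a_{6n+3}=6n+3$, $a_{6n+4}=6n+5$, $a_{6n+5}=6n+1$: one checks it satisfies the three point condition with a small constant, yet no interval splits it in the sense of Definition \ref{D3.1}; every fold reaches back into the territory of the previous block and this chaining never terminates, so the plane admits no decomposition into disjoint bounded cells inside which the folds could be resolved separately. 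The paper's solution to exactly this problem is global, not local: after Proposition \ref{P1} (three point condition forces $a_n\to\pm\infty$, up to the conformal flip $z\mapsto -z$), Theorem \ref{T1} -- the real core, proved in Section \ref{proofT1} by locating the sign-crossing indices and quasiconformally sorting bounded neighborhoods of them -- produces a $K_1(\lambda)$-quasiconformal $\widetilde{f}_1$ of $\mathbb{C}$, supported in a strip, such that $b_n:=\widetilde{f}_1(a_n)$ is $(2\lambda+3)$-splittable. Only then does the local gluing become available: Lemma \ref{L3}, built on Lemma \ref{L1} (any permutation of $\{1,\dots,n\}$ extends to a quasiconformal map supported in a rectangle around it), yields a $K_2(\lambda)$-quasiconformal $\widetilde{f}_2$ with $\widetilde{f}_2(n)=b_n$, and the extension is $F=\widetilde{f}_1^{-1}\circ\widetilde{f}_2$. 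Your triangulation scheme could at best substitute for Lemma \ref{L1}/\ref{L3}, i.e.\ for the easy finite-block step; the straightening step that converts the three point condition into a bounded block structure is absent from your proposal, and without it there is nothing against which to verify the injectivity and dilatation bounds you postpone.
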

\noindent
We say that a sequence $\{a_n\}_{n\in \mathbb{Z}}$ satisfies the $\lambda-$three point
condition for $\lambda \geq 1$ if $|a_n-a_m|/|a_n-a_k|\leq \lambda$ holds
for any integers $n<m<k$. Thus Theorem \ref{T3.2} does not only state
every quasisymmetric automorphism of $\mathbb{Z}$ is
quasiconformally extensible, but also characterizes the quasisymmetry by a 
simple geometric condition. Further, an analogous theorem holds for
quasisymmetric automorphisms of $E=\{e^n\}_{n\in \mathbb{Z}}$ 
(see Theorem \ref{T3.3}).

Next, we will observe a subset of $\mathbb{R}$ which is an image of a
quasisymmetric embedding $f:\mathbb{Z}\rightarrow \mathbb{R}$ 
in Section \ref{images}, to complete
the proof of Theorem \ref{T5.1}. In this case, such subsets can also be
characterized by a simple geometric condition as follows;

\setcounter{alpthm}{2}
\begin{alpthm}{\rm (Characterizetion of quasisymmetric images)} \label{T4.1}
	For a subset $E\subset \mathbb{R}$, the following conditions
	are quantitatively equivalent;
	\begin{enumerate}
		\item There exists an $\eta-$quasisymmetric bijection 
			$f:\mathbb{Z}\rightarrow E$.
		\item $E$ can be written as a monotone increasing sequence $E=\{a_n\}_{n\in \mathbb{Z}}$
			with $a_n\rightarrow\pm \infty\ (n\rightarrow\pm \infty)$, and
			there exists a constant $M\geq 1$ such that the following inequality holds 
			for all $n\in \mathbb{Z}$ and $k\in \mathbb{N}$;
			\[
				\frac{1}{M} \leq \frac{a_{n+k}-a_n}{a_n-a_{n-k}} \leq M.
			\]
		\item There exists a $K-$quasiconformal mapping $F:\mathbb{C}\rightarrow\mathbb{C}$,
			such that $F(\mathbb{Z})=E$.
	\end{enumerate}
\end{alpthm}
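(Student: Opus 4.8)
\noindent
The plan is to prove the cycle $(3)\Rightarrow(1)\Rightarrow(2)\Rightarrow(3)$, keeping track of the quantitative dependence of the constants at each arrow. The implication $(3)\Rightarrow(1)$ is immediate: a global $K$-quasiconformal $F:\mathbb{C}\to\mathbb{C}$ is itself $\eta$-quasisymmetric on $\mathbb{C}$ with $\eta=\eta(K)$ by \cite[Theorem 11.14]{heinonen1}, and restricting the quasisymmetry estimate to triples of integers shows that $f:=F|_{\mathbb{Z}}:\mathbb{Z}\to E$ is an $\eta$-quasisymmetric bijection. For $(2)\Rightarrow(3)$ I would take the increasing enumeration $E=\{a_n\}$ with constant $M$, and let $g:\mathbb{R}\to\mathbb{R}$ be the increasing homeomorphism that is affine on each $[n,n+1]$ with $g(n)=a_n$ (it is a homeomorphism because $a_n\to\pm\infty$). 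Condition (2) at $k=1$ makes consecutive gaps $a_{n+1}-a_n$ comparable with ratio in $[1/M,M]$, and the full condition (2) then yields the Beurling--Ahlfors $M'$-condition $1/M'\le (g(x+u)-g(x))/(g(x)-g(x-u))\le M'$ for all $x\in\mathbb{R}$ and $u>0$, with $M'=M'(M)$; hence $g$ is quasisymmetric and extends to a $K$-quasiconformal map $F:\mathbb{C}\to\mathbb{C}$ by \cite{beurling1}. Since $F(\mathbb{Z})=g(\mathbb{Z})=E$, this gives (3).

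The substance is $(1)\Rightarrow(2)$. First I would record that quasisymmetry forces properness: the estimate $|f(0)-f(N)|\ge |f(0)-f(1)|/\eta(1/N)$, together with $\eta(1/N)\to 0$ (and its analogue as $N\to-\infty$), shows that $f(n)$ leaves every bounded set, so $E=f(\mathbb{Z})$ is a closed discrete set with no finite accumulation point and admits a strictly increasing enumeration. It then remains to install the correct index structure and prove the balance inequality. The key step is a coarse monotonicity statement: after possibly replacing $f$ by $x\mapsto -f(x)$, there is a constant $C=C(\eta)$ such that $m'-m\ge C$ implies $f(m')>f(m)$. Granting this, the sorting bijection $\sigma$ defined by $f=\rho\circ\sigma$, where $\rho(n)=a_n$, is coarsely increasing with the same $C$; since $\sigma$ is a bijection of $\mathbb{Z}$, a counting argument forces $|\,\sigma^{-1}(n)-n-c\,|\le C'$ for a fixed integer $c$ and $C'=C'(\eta)$, that is, $q_n:=f^{-1}(a_n)$ satisfies $q_{n\pm k}=q_n\pm k+O(C')$. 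In particular $E$ has order type $\mathbb{Z}$ with $a_n\to\pm\infty$.

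To finish, fix $n$ and $k\ge 1$ and put $q=q_n$, so that $a_{n\pm k}=f(q\pm k+O(C'))$ and $a_n=f(q)$. For $k$ larger than a threshold $k_0(\eta)$, applying the $\eta$-quasisymmetry of $f$ with basepoint $q$ shows that replacing the far index $q\pm k+O(C')$ by $q\pm k$ changes the distance to $f(q)$ only by a factor $L=L(\eta)$, so that $a_{n+k}-a_n$ is comparable up to $L$ with $|f(q+k)-f(q)|$ and $a_n-a_{n-k}$ is comparable up to $L$ with $|f(q)-f(q-k)|$; the small cases $k\le k_0$ are handled directly by the local comparability of the boundedly many gaps near $a_n$. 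Applying $\eta$-quasisymmetry to the symmetric triple $q-k<q<q+k$ then gives $|f(q+k)-f(q)|/|f(q)-f(q-k)|\le\eta(1)$ and, by reversing the roles, $\ge 1/\eta(1)$; hence the balance inequality holds with $M=L^{2}\eta(1)$, a constant depending only on $\eta$.

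The main obstacle is the coarse monotonicity step. Its content is precisely the phenomenon flagged in the introduction, that a quasisymmetric $f$ may change the magnitude relation; what must be proved is that it can do so only inside windows of uniformly bounded length. Heuristically, a long-range order violation given by $m_1<m_2<m_3$ with $f(m_2)$ lying far outside the interval spanned by $f(m_1),f(m_3)$ while $m_3-m_1$ is large would force $|f(m_1)-f(m_2)|/|f(m_1)-f(m_3)|$ to exceed $\eta\big((m_2-m_1)/(m_3-m_1)\big)$, contradicting $\eta$-quasisymmetry; upgrading this to the uniform bound $C=C(\eta)$ and then to the bounded-shift estimate for the sorting bijection is where the work lies. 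This is also the point at which Theorem \ref{T3.2} serves as the natural model, since there the very same control is encoded by the $\lambda$-three point condition.
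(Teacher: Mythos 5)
Your implications $(3)\Rightarrow(1)$ and $(2)\Rightarrow(3)$ are fine; the latter, via piecewise-affine interpolation and Beurling--Ahlfors, is a legitimate alternative to the paper's route, which simply invokes the earlier result \cite[Theorem A]{fujino2} for $(2)\Leftrightarrow(3)$ (though the passage from condition (2), which controls only integer centers and integer scales, to the $M'$-condition at all real points and scales is itself a computation you assert rather than carry out). The genuine gap is in $(1)\Rightarrow(2)$: the ``coarse monotonicity'' claim --- that after possibly replacing $f$ by $-f$ there is $C=C(\eta)$ with $m'-m\ge C\Rightarrow f(m')>f(m)$ --- and the ensuing bounded-shift estimate for the sorting bijection are stated but never proved, and you yourself flag them as ``where the work lies.'' This is not a deferrable detail: it is the entire substance of the theorem. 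Moreover, the heuristic you offer cannot be upgraded as stated. Quasisymmetry applied to a triple $m_1<m_2<m_3$ only bounds $|f(m_1)-f(m_2)|/|f(m_1)-f(m_3)|$ by $\eta\bigl((m_2-m_1)/(m_3-m_1)\bigr)\le\eta(1)$, so a single application of (QS) can only exclude inversions in which $f(m_2)$ lies outside the interval spanned by $f(m_1),f(m_3)$ by a definite multiplicative factor. Coarse monotonicity must exclude \emph{all} inversions at large distance, including those where $f(m')$ falls only slightly below $f(m)$; such configurations create no large cross-ratio, hence no contradiction, unless one also exploits that $f$ is a bijection onto the discrete set $E$. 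Note also that your properness observation only gives $|f(n)|\to\infty$; that $E$ is unbounded in \emph{both} directions (needed for an enumeration of order type $\mathbb{Z}$) again hangs on the unproven step.

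That missing bijectivity/counting argument is exactly what the paper supplies, and it is where the two texts genuinely differ. Lemma \ref{L4.1} (the record-index argument with the set $S$, choosing the point whose $g$-value is $g(a_k)+1$) shows $\sup E=+\infty$ and $\inf E=-\infty$; Claims \ref{subclaim1} and \ref{subclaim2} in the proof of Lemma \ref{L4.2} show, for $g=f^{-1}$ and $\mu=\eta'(1)$, that $|g(a_n)-g(a_{n+1})|<2\mu$ and $(k-1)/2\mu<|g(a_n)-g(a_{n+k})|<2\mu k$ --- precisely the quantitative form of your coarse monotonicity and bounded-shift statements, proved again by selecting auxiliary points such as the one with $g$-value $g(a_m)+1$. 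The paper then finishes more directly than your last paragraph: rather than moving the perturbed indices $q\pm k+O(C')$ back to $q\pm k$, it applies the quasisymmetry of $f$ once to the two-sided bound of Lemma \ref{L4.2} to obtain $1/\eta(L)<(a_{n+k}-a_n)/(a_n-a_{n-k})<\eta(L)$, i.e.\ $M=\eta(L)$. So your plan is consistent with the truth and its periphery is sound, but as written it postpones the only hard step of the proof.
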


    \medskip
\section{Definitions and Basic properties}
    
First, let $\eta:[0,+\infty )\rightarrow [0,+\infty )$ be a homeomorphism
and $X\subset \mathbb{C}$ be a subset.
An injection $f:X\rightarrow \mathbb{C}$ is said to be 
$\eta-$quasisymmetric if the following inequality holds for
any three points $x,y,z\in X\ (x\neq z)$;
\begin{align}
	\left|\frac{f(x)-f(y)}{f(x)-f(z)}\right|\leq 
	\eta\left(\left|\frac{x-y}{x-z}\right|\right).  \tag{QS} \label{qs}
\end{align}
If $x\neq y$, replacing $y$ and $z$, the following lower estimate holds;
\[
	\left|\frac{f(x)-f(y)}{f(x)-f(z)}\right|\geq 
	\eta\left(\left|\frac{x-y}{x-z}\right|^{-1}\right)^{-1}.
\]
Notice that if there exists at least one $\eta-$quasisymmetric mapping
(and $X$ contains at least two elements), 
applying (\ref{qs}) to $y=z$, 
it turns out that $\eta$ must satisfy $\eta(1)\geq 1$.

Next, let $K\geq 1$ and $\Omega \subset \mathbb{C}$ be a domain.
An orientation preserving homeomorphism $f:\Omega \rightarrow \mathbb{C}$
into $\mathbb{C}$ is said to be $K-$quasiconformal if its distributional
derivatives $f_z$ and $f_{\bar{z}}$ are in the locally integrable class, and 
\[
	K(f):=\underset{z\in\Omega}{{\rm ess.sup}}
		\frac{|f_z(z)|+|f_{\bar{z}}(z)|}{|f_z(z)|-|f_{\bar{z}}(z)|}\leq K.
\]
$K(f)$ is called the maximal dilatation of $f$. 

These two concepts are closely related by the so-called egg-yolk principle
(see \cite[Theorem 11.14]{heinonen1}). In particular, for orientation
preserving homeomorphisms from $\mathbb{C}$ onto itself, the quasiconformality
and the quasisymmetry are quantitatively equivalent.

    \medskip
\section{Key Observation}
\label{observation}

We would like to start from a simple observation which is trivial for ones
who are familiar with quasiconformal mappings. However, this observation
will play a central role in the construction of quasiconformal 
extensions in later sections. \\

Let us consider a rectangle $R_{a,b}=\left\{ z \in \mathbb{C};\ 
|\re z|<a,\ |\im z|<b\right\}$ for $a,b>0$.
For a real number $c\in (0,a)$, we set
\[
f(z):=\left\{
\begin{array}{ll}
z & (z\not \in R_{a,b})\\
\displaystyle \left(a+c-\frac{2c}{b}|y|\right)\frac{x+a}{a-c}-a+iy &
(z=x+iy\in R_{a,b},\ -a\leq x\leq -c)\\
\displaystyle \left(a-c+\frac{2c}{b}|y|\right)\frac{x-a}{a+c}+a+iy &
(z=x+iy\in R_{a,b},\ -c< x\leq a).
\end{array}
\right.
\]
\begin{figure}[h]
       \begin{center}
           \includegraphics[width=12cm]{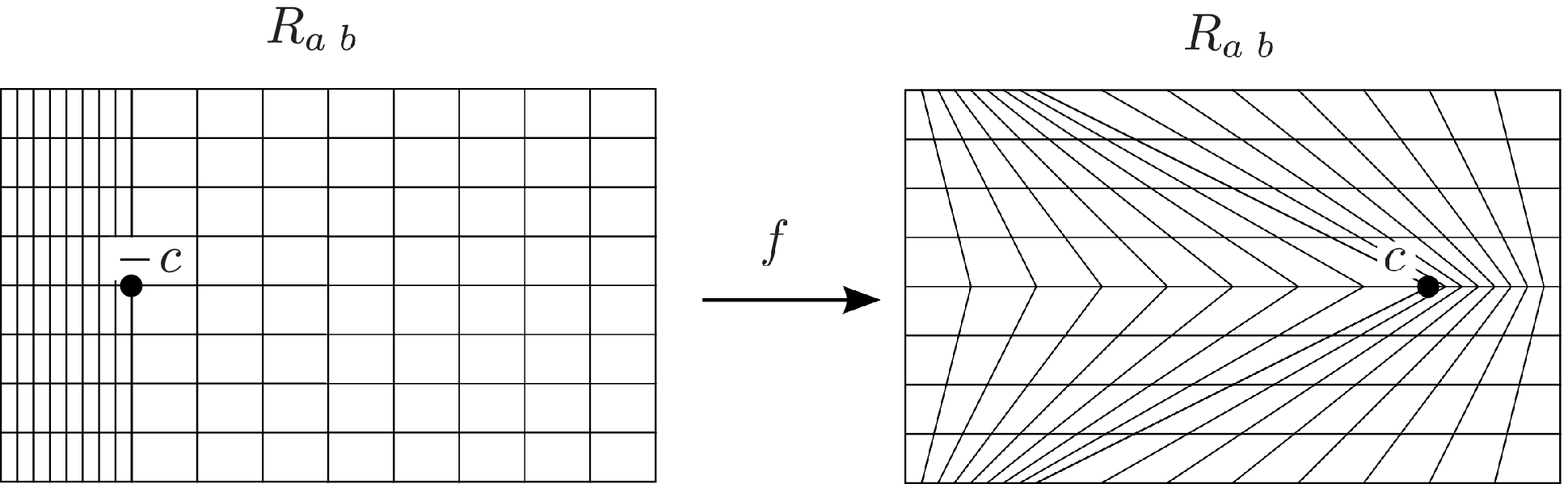} 
       \end{center}
       \caption{}   \label{obs1}
\end{figure}

Then $f$ defines a quasiconformal homeomorphism of $\mathbb{C}$
(see Figure \ref{obs1}).
In particuler $f=id$ on $\mathbb{C}\setminus R_{a,b}$, 
$f(-c)=c$, and its maximal dilatation depends only on $a,b$ and $c$. 
By using this flexible deformation, we have the following preliminary lemma.

\begin{deflem}\label{L1}
Let $n\in \mathbb{N}$ and $\delta \in (0,+\infty]$, and 
let 
\[
R(n,\delta)=\left\{z\in \mathbb{C};\ 
\frac{1}{2}<\re z< n+\frac{1}{2},\ |\im z|< \delta\right\}.
\]
Then, for any bijection
$f:\{1,2,\cdots,n\}\rightarrow \{1,2,\cdots,n\}$,
there exists a $K=K(n,\delta)-$quasiconformal extension
$\widetilde{f}:\mathbb{C}\rightarrow \mathbb{C}$ of f,
such that $\widetilde{f}=id$ on $\mathbb{C}\setminus R(n,\delta)$,
where $K=K(n,\delta)$ is a constant depending only on $n$ and $\delta$.
\end{deflem}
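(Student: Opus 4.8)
The plan is to reduce the construction to the case of swapping two \emph{adjacent} integers and then to compose a controlled number of such elementary moves. Recall that the symmetric group on $\{1,\dots ,n\}$ is generated by the adjacent transpositions $\tau_k=(k\ \ k+1)$, $1\le k\le n-1$, and that every permutation can be written as a product of at most $n(n-1)/2$ of them (its number of inversions). Thus it suffices to carry out two steps: first, to realize each $\tau_k$ by a quasiconformal self-map $\varphi_k$ of $\mathbb{C}$ that is the identity outside a small rectangle contained in $R(n,\delta)$, that interchanges $k$ and $k+1$ while fixing every other integer, and whose maximal dilatation is bounded by a constant $K_0=K_0(\delta)$ independent of $k$; and second, to compose the $\varphi_k$ according to a fixed factorization of $f$.

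For the first step I would work in the rectangle $R_k=\{\,k-\tfrac12<\re z<k+\tfrac32,\ |\im z|<\delta\,\}$, which lies in $R(n,\delta)$ and contains $k$ and $k+1$ but no other integer. The essential point, and the reason a genuine two-dimensional construction is forced upon us, is that an order-preserving homeomorphism of $\mathbb{R}$ can never interchange $k$ and $k+1$; this is precisely the phenomenon emphasized in the introduction, namely that $f$ may reverse the magnitude relation. Using the flexible deformation constructed above (which moves a marked point $-c$ to $c$ inside a rectangle while fixing its boundary), together with its rotated copies that move a marked point vertically, I would push $k$ into the upper half-plane, slide the two points horizontally past one another (the lifted point through the upper half-plane, the other along the real axis), and then bring the displaced point back down, so that the net effect is the half-twist $k\leftrightarrow k+1$. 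Each elementary push is supported in $R_k$, keeps $\partial R_k$ pointwise fixed, and is orientation preserving with dilatation controlled by the dimensions of $R_k$; since a fixed, bounded number of pushes is used and these dimensions depend only on $\delta$ (the horizontal spacing being normalized to $1$), the resulting $\varphi_k$ satisfies $K(\varphi_k)\le K_0(\delta)$.

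For the second step, fix a factorization $f=\tau_{k_1}\tau_{k_2}\cdots\tau_{k_m}$ with $m\le n(n-1)/2$ and set $\widetilde f=\varphi_{k_1}\circ\cdots\circ\varphi_{k_m}$. Since each $\varphi_{k_j}$ maps $\{1,\dots ,n\}$ to itself (interchanging two of them and fixing the rest), the composition restricts to $f$ on $\{1,\dots ,n\}$; and since each $\varphi_{k_j}$ is the identity off $R_{k_j}\subset R(n,\delta)$, so is $\widetilde f$. The maximal dilatation is then controlled by submultiplicativity under composition, $K(\widetilde f)\le\prod_{j}K(\varphi_{k_j})\le K_0(\delta)^{\,n(n-1)/2}=:K(n,\delta)$, which depends only on $n$ and $\delta$, as required. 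I expect the main obstacle to lie in the first step: producing the order-reversing half-twist as an honest quasiconformal map that is the identity on $\partial R_k$, and keeping its dilatation finite when $\delta$ is small, in which case the twist must be performed inside a very thin strip and the distortion degenerates while remaining bounded for each fixed $\delta$. The bookkeeping of the second step — verifying that the supports are arranged so that the permutation is realized correctly and that the spacing normalization makes $K_0$ depend on $\delta$ alone — is routine once the elementary swap is in hand.
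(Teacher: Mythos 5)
Your proposal is correct, and its geometric core --- realizing a transposition by lifting one point off the real line, sliding the two points past each other, and dropping the lifted point back, all built from the flexible rectangle deformation and supported inside $R(n,\delta)$ --- is exactly the mechanism the paper uses. Where you differ is the combinatorial decomposition. The paper argues by induction on $n$: it peels off the single (generally non-adjacent) transposition $(f(n),\,n)$, realizes it by one lift--slide--drop map $g$ supported in the whole rectangle $R(n,\delta)$, and reduces to a permutation of $\{1,\dots,n-1\}$; the dilatation of $g$ is bounded only by a finiteness argument (``the possible values of $m=f(n)$ are only $n$ kinds''), so each factor's bound depends on both $n$ and $\delta$, and there are $n-1$ factors. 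You instead factor $f$ into at most $n(n-1)/2$ adjacent transpositions, each realized in a translated copy of a fixed rectangle $R_k$ of width $2$ and height $2\delta$, so each elementary swap has dilatation $K_0(\delta)$ uniform in $k$ and $n$, and submultiplicativity gives the fully explicit bound $K_0(\delta)^{n(n-1)/2}$. Your route costs more factors but buys uniformity and explicitness of the constant; the paper's route has fewer factors and lighter bookkeeping but a softer, non-constructive bound at each step. Both are equally valid: your overlapping supports $R_k\cap R_{k+1}\neq\emptyset$ cause no harm, since $K$ is submultiplicative under composition regardless of supports, and a composition of maps each equal to the identity off its own $R_{k_j}$ is still the identity off $\bigcup_j R_{k_j}\subset R(n,\delta)$.
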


\begin{proof}
We prove the claim by induction with respect to $n\in \mathbb{N}$.
Clearly, the claim holds for $n=1$. We assume the claim holds
for $n-1\ (n\geq 2)$. 

Let $f:\{1,2,\cdots,n\}\rightarrow\{1,2,\cdots,n\}$ 
be a bijection, and let $m:=f(n)$. By the preceding observation,
we can easily construct a quasiconformal mapping $g_1:\mathbb{C}
\rightarrow \mathbb{C}$ which maps $m\mapsto m-i\delta/2,\ n\mapsto n+i\delta/2$,
fixes the other integers, and is identity on $\mathbb{C}\setminus 
R(n,\delta)$ (see Figure \ref{obs2}).
Similarly we construct global quasiconformal mappings
$g_2$ which maps $m-i\delta/2\mapsto n-i\delta/2,\ n+i\delta/2\mapsto m+i\delta/2$, 
and $g_3$ which maps $n-i\delta/2 \mapsto n,\ m+i\delta/2 \mapsto m$.
Then $g:=g_3 \circ g_2 \circ g_1$ is a quasiconformal mapping which permutes
$n$ and $m$, fixes the other integers, and is identity on $\mathbb{C}
\setminus R(n,\delta)$.  
Since the possible values of $m$ are only $n-$kinds, the maximal dilatation of $g$
is bounded by a constant depending only on $n$ and $\delta$.
\begin{figure}[h]		
       \begin{center}
           \includegraphics[width=13cm]{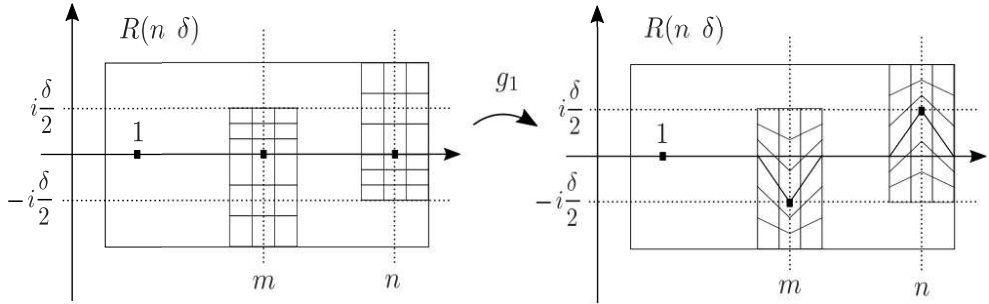} 
       \end{center}
       \caption{:　construction of $g_1$.}   \label{obs2}
\end{figure}

By the construction, $f_1:=g^{-1}\circ f$ fixes $n$. 
Thus $f_1$ defines a permutation of the set 
$\{1,2,\cdots,n-1\}$. From the assumption, $f_1$ extends to a
global quasiconformal mapping which satisfies the conditions of the claim.
Thus, we have a desired extension $\widetilde{f}=g\circ f_1$.
\end{proof}

Each bijection $f:\mathbb{Z}\rightarrow \mathbb{Z}$ can be regarded as 
a permutation of countably many elements. On the other hand, Lemma \ref{L1}
guarantees that any permutation of $\{1,2,\cdots,n\}\subset \mathbb{C}$, a finite set,
can be represented by a quasiconformal homeomorphism of $\mathbb{C}$ which deforms only
a small neighborhood $R(n,\delta)$ of $\{1,2,\cdots,n\}$.

    \medskip 
\section{Extensibility of quasisymmetric automorphisms of $\mathbb{Z}$}
\label{automorphism}

The aim of this section is to prove Theorem \ref{T3.2}.
For this purpose, it is useful to consider bijections
$f:\mathbb{Z}\rightarrow \mathbb{Z}$ as sequences.

\subsection{Splittable bijective sequence}\ 

We say that a sequence $A=\{a_n\}_{n\in\mathbb{Z}}= \mathbb{Z}$ is \emph{bijective}
if the correspondence $n\mapsto a_n\ (\mathbb{Z}\rightarrow \mathbb{Z})$
is bijective. For two integers $k,\ell \in\mathbb{Z}\ (k\leq \ell)$, we use
the following notations;
\begin{eqnarray*}
[k,\ell]_{\mathbb{Z}}&:=&[k,\ell]\cap \mathbb{Z}=\{k,k+1,\cdots ,\ell-1,\ell \},\\
\bigl| [k,\ell]_{\mathbb{Z}}\bigr|&:=&\# [k,\ell]_{\mathbb{Z}}\ \ =\ \ell-k+1.
\end{eqnarray*}
Remark that we allow the case of $k=\ell$ in the above notations, and in this case,
$[k,k]_{\mathbb{Z}}=\{k\}$ and $\bigl| [k,k]_{\mathbb{Z}}\bigr|=1$.

\begin{defdef}\label{D3.1}
Let $A=\{a_n\}_{n\in\mathbb{Z}}=\mathbb{Z}$ be bijective.
We say that an interval $I=[k,\ell]_{\mathbb{Z}}$ splits $A$ if 
$\displaystyle a_n>\max_{j\in I} a_j$ holds for all $n>\ell$,
and $\displaystyle a_n<\min_{j\in I} a_j$ holds for all $n<k$.

Further, we say that $A$ is $C-$splittable for a constant $C\geq 1$ if
there exists a strictly monotone increasing sequence $\{k_n\}_{n\in\mathbb{Z}}
\subset \mathbb{Z}$ which satisfies the following conditions
for all $n\in \mathbb{Z}$;
\begin{itemize} \setlength{\itemsep}{1ex}
\item the interval $I_n:=[k_n +1,k_{n+1}]_{\mathbb{Z}}$ splits $A$,
\item $|I_n|=k_{n+1}-k_n\leq C$.
\end{itemize}

\end{defdef}

By the definition, the following is immediately confirmed. 

\begin{deflem} \label{L2}
If an interval $I=[k,\ell]_{\mathbb{Z}}$ splits a 
bijective sequence $A=\{a_n\}_{n\in \mathbb{Z}}= \mathbb{Z}$,
then $\{a_n\}_{n\in I}$ is an interval $I'=[k',\ell ']_{\mathbb{Z}}$ with
$|I|=|I'|$. Further $\{a_n\}_{\mathbb{Z}_{>\ell}}=[\ell '+1,+\infty)_{\mathbb{Z}}
=\{\ell'+1,\ell'+2,\cdots\}$,
and $\{a_n\}_{n\in\mathbb{Z}_{<k}}=(-\infty,k'-1]_{\mathbb{Z}}
=\{k'-1,k'-2,\cdots\}$.
\end{deflem}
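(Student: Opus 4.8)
The plan is to read off the endpoints $k'$ and $\ell'$ directly from the splitting data, by setting $\ell':=\max_{j\in I}a_j$ and $k':=\min_{j\in I}a_j$, and then to exploit the bijectivity of $A$ to pin down exactly which integers appear as $a_n$ for $n\in I$, for $n>\ell$, and for $n<k$. The whole argument is a counting argument: since $n\mapsto a_n$ is a bijection of $\mathbb{Z}$, each integer occurs exactly once, and the splitting condition forces every integer into the correct block.

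First I would show $\{a_n\}_{n\in I}=[k',\ell']_{\mathbb{Z}}$. The inclusion $\subseteq$ is immediate from the definitions of $k'$ and $\ell'$. For the reverse inclusion I would take an arbitrary integer $v$ with $k'\leq v\leq \ell'$ and let $n_0$ be the unique index with $a_{n_0}=v$, using surjectivity. If $n_0>\ell$, the splitting condition gives $a_{n_0}>\ell'\geq v$, a contradiction; if $n_0<k$, it gives $a_{n_0}<k'\leq v$, again a contradiction. Hence $n_0\in I$, which proves $\supseteq$. Since $A$ restricted to $I$ is injective, $|I'|=\#\{a_n\}_{n\in I}=|I|$, which yields the equality of cardinalities and in particular $\ell'-k'=\ell-k$.

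The two tail statements follow by the same bookkeeping. For $\{a_n\}_{\mathbb{Z}_{>\ell}}$, the splitting condition gives the inclusion into $[\ell'+1,+\infty)_{\mathbb{Z}}$ at once. For the reverse inclusion I would take any integer $w>\ell'$, write $w=a_{n_0}$, and rule out $n_0\in I$ (which would force $w\leq \ell'$) as well as $n_0<k$ (which would force $w<k'\leq \ell'<w$); the only remaining possibility is $n_0>\ell$. The statement for $\{a_n\}_{\mathbb{Z}_{<k}}$ is entirely symmetric, exchanging the roles of $\max$ and $\min$ and reversing the two inequalities.

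I do not expect any genuine obstacle here; the content is simply the combinatorics of a bijection of $\mathbb{Z}$ together with the order-separation encoded in the splitting condition. The only point requiring a little care is to invoke bijectivity in the correct direction at each step — surjectivity to produce the index $n_0$ realizing a prescribed value, and injectivity on $I$ to match cardinalities — so that the three blocks $\{n<k\}$, $I$, and $\{n>\ell\}$ map onto the three corresponding blocks of $\mathbb{Z}$ without overlap.
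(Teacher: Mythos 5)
Your proposal is correct and follows essentially the same route as the paper: define $k'=\min_{j\in I}a_j$ and $\ell'=\max_{j\in I}a_j$, observe the three inclusions coming directly from the splitting condition, and use bijectivity to upgrade them to equalities. The paper compresses your element-by-element surjectivity argument into the single remark that, since the three index blocks and the three target blocks each partition $\mathbb{Z}$, the inclusions must all be equalities; your version merely makes that counting step explicit.
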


\begin{proof}
Let $k'= \min_{j\in I} a_j$ and $\ell'=\max_{j\in I} a_j$. By the definition,
we have $\{a_n\}_{n\in I}\subset [k',\ell']_{\mathbb{Z}}$,\ \  
$\{a_n\}_{\mathbb{Z}_{>\ell}}\subset [\ell '+1,+\infty)_{\mathbb{Z}}$,
and $\{a_n\}_{n\in\mathbb{Z}_{<k}}\subset (-\infty,k'-1]_{\mathbb{Z}}$.
Since $A=\{a_n\}_{n\in \mathbb{Z}}$ is bijective, the above implications
must be equalities. Further $|I'|=\# \{a_n\}_{n\in I}=|I|$.
\end{proof}

Note that if an interval $I$ splits a bijective sequence 
$A=\{a_n\}_{n\in \mathbb{Z}}=\mathbb{Z}$, 
then $a_n\rightarrow \pm \infty$ as
$n\rightarrow \pm \infty$.

\begin{defex} 
Define a bijective sequence $A=\{a_n\}_{n\in\mathbb{Z}}$ by
\begin{eqnarray*}
a_{6n}=6n,&\ a_{6n+1}=6n+2,&\ a_{6n+2}=6n-2,\\
a_{6n+3}=6n+3,&\ a_{6n+4}=6n+5,&\ a_{6n+5}=6n+1.
\end{eqnarray*}
Then $a_n\rightarrow \pm \infty$ as $n\rightarrow \pm \infty$, but there is no
interval which splits $A$.
\end{defex}
\begin{figure}[h]
       \begin{center}
           \includegraphics[width=10.5cm]{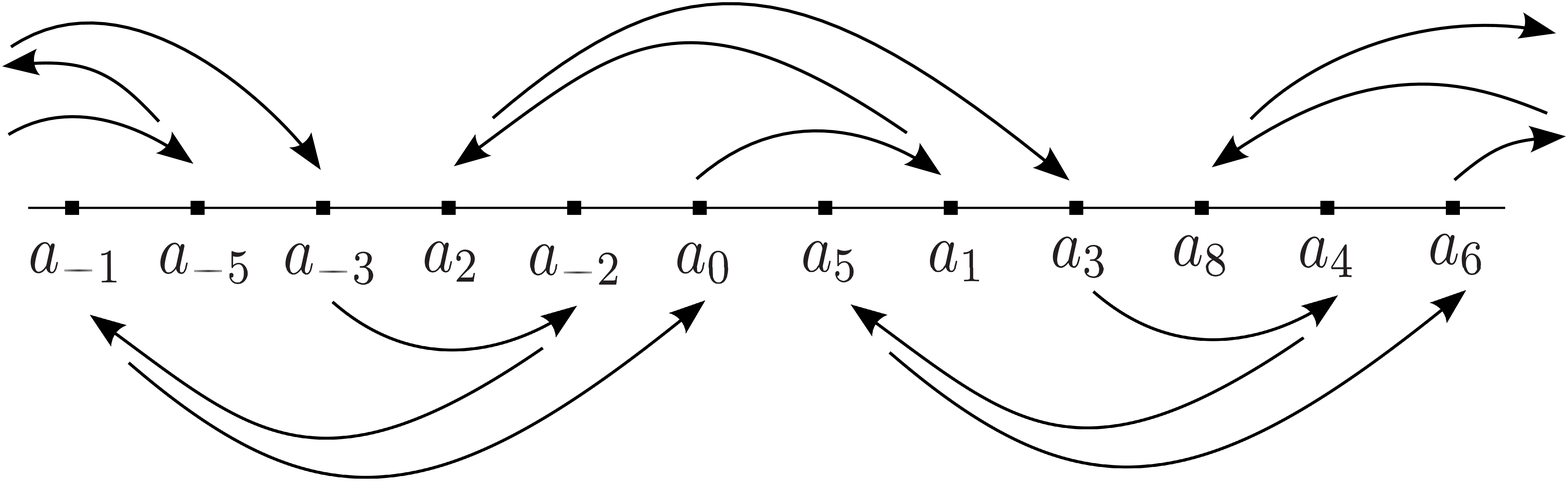} 
       \end{center}
       \caption{\empty}   \label{sbs1}
\end{figure}
In Figure \ref{sbs1}, each arrow represents the orbit of the sequence
$A=\{a_n\}_{n\in \mathbb{Z}}$, that is, each arrow starts from $a_n$ and
gets to $a_{n+1}$ for some $n\in\mathbb{Z}$. Such diagrams as Figure \ref{sbs1}
are useful for our argument, and will be used frequently in later sections.

\begin{deflem} \label{L3}
Let $A=\{a_n\}_{n\in\mathbb{Z}}= \mathbb{Z}$ be a bijective sequence, 
$C\geq 1$, and $\delta \in (0,+\infty]$.
If $A$ is $C-$splittable, then there exists a $K=K(C,\delta)-$quasiconformal mapping
$\tilde{f}:\mathbb{C}\rightarrow \mathbb{C}$ which satisfies the following conditions;
\begin{itemize}
	\item $\widetilde{f}(n)=a_n$ for all $n\in \mathbb{Z}$,
	\item $\widetilde{f}=id$ on $\mathbb{C} \setminus \{z\in\mathbb{C};\ |\im z|< \delta\}$,
\end{itemize}
where $K=K(C,\delta)$ is a constant depending only on $C$ and $\delta$.
\end{deflem}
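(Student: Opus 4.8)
The plan is to exploit the block structure furnished by $C$-splittability, and to build $\widetilde{f}$ by solving a \emph{bounded} rearrangement problem separately on each block, using Lemma \ref{L1} as the local engine. First I would fix a splitting sequence $\{k_n\}_{n\in\mathbb{Z}}$ as in Definition \ref{D3.1}, so that $\mathbb{Z}=\bigsqcup_{n\in\mathbb{Z}} I_n$ with $I_n=[k_n+1,k_{n+1}]_{\mathbb{Z}}$ and $c_n:=|I_n|\le C$. By Lemma \ref{L2}, for each $n$ the image $\{a_j\}_{j\in I_n}$ is again an integer interval $I_n'=[p_n,q_n]_{\mathbb{Z}}$ of the same length $c_n$, while $\{a_j\}_{j>k_{n+1}}$ lies entirely to the right of $q_n$ and $\{a_j\}_{j<k_n+1}$ entirely to the left of $p_n$. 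Hence the image blocks $I_n'$ are pairwise disjoint, consecutive ($p_{n+1}=q_n+1$), and listed in the \emph{same} increasing order as the $I_n$. In particular the block-level correspondence $I_n\mapsto I_n'$ is order preserving, and $a|_{I_n}\colon I_n\to I_n'$ is a bijection between two integer intervals of equal length at most $C$. This reduces everything to realizing, simultaneously and with a uniform dilatation bound, countably many rearrangements, each of which moves at most $C$ consecutive integers.

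Next, for each $n$ I would enclose $I_n$ (together with its image $I_n'$) in a rectangle $R_n\subset\{z:\ |\im z|<\delta\}$ and produce a quasiconformal map $\widetilde{f}_n\colon\mathbb{C}\to\mathbb{C}$ that realizes $a|_{I_n}$, equals the identity on $\mathbb{C}\setminus R_n$, and whose maximal dilatation depends only on $c_n$ and $\delta$. Once $I_n$ and $I_n'$ are identified, this is precisely Lemma \ref{L1} (after the affine normalization sending the relevant interval to $\{1,\dots,c_n\}$), which itself rests on the elementary deformation from Section \ref{observation}. Since the block lengths take only the finitely many values $1,2,\dots,C$, the dilatations $K(\widetilde{f}_n)$ admit a common bound $K(C,\delta)$. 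Provided the supports $R_n$ are pairwise disjoint and contained in the strip, I would then glue: setting $\widetilde{f}:=\widetilde{f}_n$ on $R_n$ and $\widetilde{f}:=\mathrm{id}$ elsewhere defines a homeomorphism of $\mathbb{C}$ whose maximal dilatation is $\sup_n K(\widetilde{f}_n)\le K(C,\delta)$, with $\widetilde{f}(n)=a_n$ for all $n$ and $\widetilde{f}=\mathrm{id}$ off the strip.

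The step I expect to be the main obstacle is exactly this disjoint-support gluing, namely arranging that each $R_n$ contains both $I_n$ and $I_n'$ while the $R_n$ remain pairwise disjoint and of width controlled by $C$. By the ordering above, $I_n'$ is a translate of $I_n$ by the offset $p_n-(k_n+1)$, which the consecutiveness relations force to be independent of $n$; the delicate point is to control this common offset, because a genuine net translation of all blocks cannot be realized by a map supported in a fixed-height strip with dilatation independent of the translation length (a fixed-width strip admits only a bounded amount of shearing). Thus the crux is to show that, under the hypotheses, the block correspondence returns each $I_n$ to a neighbourhood of itself of size $O(C)$ — equivalently that the offset is bounded in terms of $C$ — after which the bounded rectangles $R_n$ can be chosen disjoint and the uniform estimate $K(C,\delta)$ follows. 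It is this alignment of $I_n$ with $I_n'$, rather than the purely local rearrangement inside a block, where the real work lies.
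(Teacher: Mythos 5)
Your block decomposition, the reduction to Lemma \ref{L1} on each block, and the disjoint-support gluing are exactly the paper's proof; the two arguments diverge only at the offset. But the step you isolate as the crux --- proving that the common offset $t=p_n-(k_n+1)$ is bounded in terms of $C$ --- is not merely left open, it is unprovable: $C$-splittability places no constraint at all on $t$. Indeed, $a_n=n+t$ is $1$-splittable for every $t\in\mathbb{Z}$ (take $k_n=n$ in Definition \ref{D3.1}), and its offset is exactly $t$. Combining this with your own correct observation that a fixed-height strip admits only a bounded amount of shearing shows that for finite $\delta$ the lemma as literally stated is false, so no argument can close the gap: if $F$ is $K$-quasiconformal on $\mathbb{C}$, $F=id$ on $\{|\im z|\geq \delta\}$ and $F(0)=t$, then since global quasiconformal maps of the plane are $\eta_K$-quasisymmetric with $\eta_K$ depending only on $K$ (the egg-yolk principle cited in Section 2), the triple $i\delta,\ 0,\ 2i\delta$ yields $\eta_K(1)\geq \sqrt{t^2+\delta^2}/\delta$, whence $K\rightarrow\infty$ as $t/\delta\rightarrow\infty$. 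Thus no uniform $K(C,\delta)$ can exist for the family $\{n+t\}_{t\in\mathbb{Z}}$.

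The paper disposes of the offset by fiat rather than by bounding it: ``since translation is conformal, we may assume $\{a_j\}_{j\in I_n}=I_n$,'' i.e.\ it replaces $A$ by $A-t$ and runs your blockwise construction, which amounts to post-composing the resulting map with $z\mapsto z+t$. This produces a $K(C,\delta)$-quasiconformal map with $\widetilde{f}(n)=a_n$, but one which equals that translation, not the identity, outside the strip --- precisely the defect you anticipated. So what the paper's argument actually establishes is the lemma with the weakened conclusion ``$\widetilde{f}$ coincides with a real translation on $\mathbb{C}\setminus\{z;\ |\im z|<\delta\}$'' (equivalently, the stated lemma under the extra normalization of zero offset), and this weaker statement is all that its applications require: in Theorem \ref{T3.2} the off-strip clause is never used, and in Proposition \ref{P3.2} the periodic extension $\widetilde{F}(z+2n\pi i)=\widetilde{G}(z)+2n\pi i$ remains well defined when $\widetilde{G}$ is a horizontal translation off the strip, because such translations commute with $z\mapsto z+2\pi i$. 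In summary: you follow the paper's construction, you correctly diagnose the obstruction (which is in fact a flaw in the statement of the lemma, not only in your attempt), but your proposed resolution --- bounding the offset by $C$ --- cannot succeed; the viable repair is to weaken the off-strip conclusion to allow a translation, not to strengthen what splittability gives.
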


\begin{proof}
Since $A$ is $C-$splittable, there is a strictly monotone increasing sequence
$\{k_n\}_{n\in\mathbb{Z}}\subset\mathbb{Z}$ such that each interval
$I_n:=[k_n +1,k_{n+1}]_{\mathbb{Z}}$ splits $A$, and satisfies $|I_n|\leq C$.

Since translation $z\mapsto z+\alpha\ (\mathbb{C}\rightarrow\mathbb{C})$
is conformal for any $\alpha\in \mathbb{C}$, 
we may assume $\{a_j\}_{j\in I_n}=[k_n +1,k_{n+1}]_{\mathbb{Z}}=I_n$
for all $n\in\mathbb{Z}$. Namely, the correspondence $j\mapsto a_j
\ (\mathbb{Z}\rightarrow\mathbb{Z})$ splits
into permutations $j\mapsto a_j\ (I_n\rightarrow I_n)\ (n\in \mathbb{Z})$. 
By Lemma \ref{L1}, for each $n\in \mathbb{Z}$, there
exists a $K=K(C,\delta)-$quasiconformal mapping $\widetilde{f}_n:\mathbb{C}\rightarrow
\mathbb{C}$ such that
\begin{itemize}
	\item $\widetilde{f}_n(j)=a_j$ for all $j\in I_n$,
	\item $\widetilde{f}_n=id$ on $\mathbb{C}\setminus \displaystyle
		\left\{z;\ k_n +\frac{1}{2} < \re z < k_{n+1} +\frac{1}{2},\ 
		|\im z|< \delta \right\}$.
\end{itemize}
Let $\widetilde{f}(z):=\widetilde{f}_n(z)$ if 
$z\in \{z;\ k_n+1/2< \re z \leq k_{n+1}+1/2\}.$
Then, clearly $\widetilde{f}$ defines a homeomorphism of $\mathbb{C}$.
Since domains $\{z;\ k_n+1/2< \re z < k_{n+1}+1/2\}
\ (n\in \mathbb{Z})$ are disjoint, the maximal
dilatation of $\widetilde{f}$ is also $K$. Thus $\widetilde{f}$ is a desired mapping.
\end{proof}

\subsection{Three point condition for bijective sequences}\ 

\begin{defdef} 
	Let $A=\{a_n\}_{n\in\mathbb{Z}}=\mathbb{Z}$ be a 
	bijective sequence, and let $\lambda \geq 1$.
	We say that $A$ satisfies the $\lambda-$three point condition if
	for any integers $n<m<k$, it holds that 
	\begin{equation}
		\left|\frac{a_n-a_m}{a_n-a_k}\right|\leq \lambda. \tag{3PC} \label{3PC}
	\end{equation}
\end{defdef}
Suppose $A$ satisfies the $\lambda-$three point condition. Then for 
any integers $n<m<k$, it holds from the triangle inequality that
\[
	\lambda \geq \left|\frac{a_n-a_m}{a_n-a_k}\right| \geq \left| 1-
	\left|\frac{a_k-a_m}{a_n-a_k}\right|\right|.
\]
Thus, we have a symmetric condition;
\begin{equation}
	\left|\frac{a_k-a_m}{a_k-a_n}\right| \leq \lambda +1. \tag{3PC'} \label{3PC'}
\end{equation}

\begin{defremark} \label{remark1}
	We would like to emphasise that the condition {\rm (\ref{3PC})} holds
	even if $n=m$, and the condition {\rm (\ref{3PC'})} holds even if $m=k$.
	This makes our arguments concise.
\end{defremark}

The conditions (\ref{3PC}) and (\ref{3PC'}) have a simple geometrical meaning;
suppose the orbit starts from a certain point $a_n$ and 
goes far away, say $a_m\ (m>n)$, then the orbit 
$\{a_j\}_{j>m}$ cannot return to a point
near to $a_n$ above a certain rate (see Figure \ref{3pc1}).

\begin{figure}[h]
       \begin{center}
           \includegraphics[width=10cm]{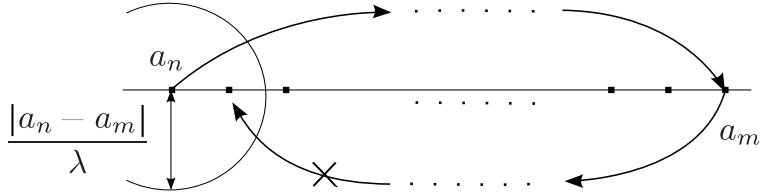} 
       \end{center}
       \caption{:　$\lambda-$three point condition}   \label{3pc1}
\end{figure}

\begin{defprop}\label{P1}
	If a bijective sequence $A=\{a_n\}_{n\in\mathbb{Z}}=\mathbb{Z}$ satisfies
	the $\lambda-$three point condition for some $\lambda \geq 1$, then
	$a_n\rightarrow \pm\infty\ (n\rightarrow \pm \infty)$ or
	$a_n\rightarrow \mp\infty\ (n\rightarrow \pm \infty)$ holds.
\end{defprop}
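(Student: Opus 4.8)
The plan is first to reduce the statement to a single no-oscillation claim at one end, and then to convert oscillation into a violation of the three point condition. Since $n\mapsto a_n$ is a bijection onto $\mathbb{Z}$, for every $M>0$ the set $\{n:|a_n|\le M\}$ is the preimage of a finite set, hence finite; therefore $|a_n|\to\infty$ as $|n|\to\infty$. Granting this, the conclusion can fail only in two ways: either $a_n$ oscillates at one of the ends (say $\limsup_{n\to+\infty}a_n=+\infty$ while $\liminf_{n\to+\infty}a_n=-\infty$), or $a_n$ tends to the \emph{same} infinity at both ends. The second possibility is excluded at once by surjectivity: if, for instance, $a_n\to+\infty$ both as $n\to+\infty$ and as $n\to-\infty$, then $\inf_n a_n$ is attained and finite, so all sufficiently negative integers are omitted. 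Thus the heart of the matter is to rule out oscillation as $n\to+\infty$; the case $n\to-\infty$ will then follow by applying the same argument to the reversed sequence $b_n:=a_{-n}$, which satisfies a $(\lambda+1)$-three point condition because \eqref{3PC} and its symmetric form \eqref{3PC'} are interchanged under index reversal.

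The main tool I would extract from \eqref{3PC} is a bounded-excursion estimate. Reading \eqref{3PC} as $|a_n-a_m|\le\lambda|a_n-a_k|$ for $n<m<k$ (together with \eqref{3PC'}), every value $a_j$ with $n\le j\le k$ lies within distance $\lambda|a_n-a_k|$ of $a_n$, so the oscillation of $\{a_j\}$ over $[n,k]_{\mathbb{Z}}$ is at most $2\lambda|a_n-a_k|$. The special case I will actually use is: if $v$ and $v+1$ are consecutive integers, realized at the indices $p_v=f^{-1}(v)$ and $p_{v+1}=f^{-1}(v+1)$, then every value attained at an index strictly between $p_v$ and $p_{v+1}$ exceeds $v-\lambda$.

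To finish, I assume oscillation at $+\infty$ and derive a contradiction. For $v\ge V$ consider the integer intervals $[\min(p_v,p_{v+1}),\max(p_v,p_{v+1})]_{\mathbb{Z}}$ joining consecutive $p_v$ and $p_{v+1}$. Consecutive such intervals share an endpoint, so their union over $v\ge V$ is again an interval; since $\limsup_{n\to+\infty}a_n=+\infty$ forces the indices $p_v$ to be unbounded above, this union contains every sufficiently large positive integer. On the other hand $\liminf_{n\to+\infty}a_n=-\infty$ provides arbitrarily large indices $q$ with $a_q$ as negative as we please. Such a $q$ must then lie strictly inside one of these intervals for some $v\ge V$, and the excursion estimate forces $a_q>v-\lambda\ge V-\lambda$, contradicting $a_q$ being very negative. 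This rules out oscillation, and, combined with the reduction above, completes the proof.

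The step I expect to be the main obstacle is precisely this last forcing argument. The difficulty is that oscillation by itself does not directly contradict the excursion estimate: the naive endpoints (the first up- and down-crossings of a large level) are themselves far apart, so \eqref{3PC} is not yet violated. The essential point is to use surjectivity to manufacture two endpoints with nearly equal values, namely the indices of consecutive integers $v$ and $v+1$, straddling a deep excursion; the interval-covering bookkeeping is what guarantees that a deep negative index is in fact trapped strictly between such a pair.
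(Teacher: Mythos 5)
Your proof is correct, and it reaches the contradiction by a genuinely different route than the paper, although both arguments ultimately hinge on the same mechanism: bijectivity supplies two indices whose values are consecutive integers $v$ and $v+1$, so the denominator in (\ref{3PC}) collapses to $1$, and any index lying between them whose value is far from $v$ gives a violation. The paper (after normalizing $a_0=0$) produces such a configuration by a pigeonhole across the two tails: if the positive-index tail oscillates, its values cannot exhaust $\{m\in\mathbb{Z};\ |m|>\lambda\}$, because the negative-index values form an infinite set; hence some $n_1>0$ with $|a_{n_1}|>\lambda+1$ has its neighboring value $a_{n_1}\pm 1$ attained at some $n_{-1}<0$, and the single triple $n_{-1}<0<n_1$ already violates (\ref{3PC}), since $|a_{n_{-1}}-a_0|>\lambda$ while $|a_{n_{-1}}-a_{n_1}|=1$. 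You instead stay entirely inside one tail: the chain of integer intervals joining $f^{-1}(v)$ to $f^{-1}(v+1)$ for $v\ge V$ covers all sufficiently large indices, so a deep negative excursion index $q$ is trapped strictly between the indices of some pair $v,v+1$, and the same denominator-$1$ estimate applies. The paper's pigeonhole is shorter and settles oscillation with one triple straddling the index $0$, using both tails at once; your covering argument costs more bookkeeping but is self-contained within each tail, which makes the case $n\to-\infty$ a purely formal index reversal (at the harmless price of the constant degrading to $\lambda+1$ via (\ref{3PC'})) and isolates a reusable bounded-excursion estimate. Your preliminary reductions ($|a_n|\to\infty$ from bijectivity, exclusion of a common infinity by surjectivity) are present, more implicitly, in the paper's proof as well.
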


\begin{proof}
	First, we prove $a_n\rightarrow +\infty$ or $a_n\rightarrow -\infty$ as
	$n\rightarrow +\infty$. To obtain a contradiction, we assume
	$a_n \not \rightarrow \pm \infty$ as $n\rightarrow +\infty$. Further,
	we suppose $a_0=0$ for simplicity.
	
	By the assumption, $\{a_n\}_{n=0}^{+\infty}$ is unbounded from above
	and below. Thus there is an integer $n_1>0$ such that $|a_{n_1}|>
	\lambda+1$ holds, and $a_{n_1}+1 \in \{a_n\}_{n=-\infty}^{-1}$ or 
	$a_{n_1}-1\in \{a_n\}_{n=-\infty}^{-1}$ holds. Indeed if such an integer
	does not exist, $\{a_n\}_{n=0}^{+\infty}$ must contain $\{m\in \mathbb{Z};
	\ |m|>\lambda\}$. This cannot occur since $\{a_n\}_{n=-\infty}^{-1}
	= \mathbb{Z} \setminus \{a_n\}_{n=0}^{+\infty}$ is an infinite subset.
	
	Let $n_{-1}<0$ be an integer which satisfies $a_{n_{-1}}=a_{n_1}+1$ or
	$a_{n_{-1}}=a_{n_1}-1$. Then, by the three point condition,
	We have a contradiction;
	\[
		\lambda \geq \left| \frac{a_{n_{-1}}-a_0}{a_{n_{-1}}-a_{n_1}}\right|
		=|a_{n_{-1}}|>\lambda.
	\]

	Similarly, we can show $a_n\rightarrow +\infty$ or $a_n\rightarrow -\infty$
	as $n\rightarrow -\infty$. Obviously, if $a_n\rightarrow +\infty\ (n
	\rightarrow +\infty)$, then $a_n \not \rightarrow +\infty\ (n\rightarrow -\infty)$.
	Thus we have the claim.
\end{proof}

\subsection{Key Theorem and Extensibility of Quasisymmetric automorphisms}\ 

The following theorem will be proved in Section \ref{proofT1}.

\begin{defthm}\label{T1}
	Let $A=\{a_n\}_{n\in\mathbb{Z}}=\mathbb{Z}$ be bijective, 
	$\lambda \geq 1$, and $\delta\in (0,+\infty]$. If $A$ satisfies the $\lambda-$three
	point condition and $a_n\rightarrow \pm\infty\ (n\rightarrow \pm\infty)$,
	then there exists a $K=K(\lambda,\delta)-$quasiconformal mapping
	$\widetilde{f}:\mathbb{C}\rightarrow \mathbb{C}$ such that
	\begin{itemize}
		\item $\widetilde{f}=id$ on $\mathbb{C}\setminus \{z;\ |\im z|< \delta\}$,
		\item $B=\{b_n:=\widetilde{f}(a_n)\}_{n\in\mathbb{Z}}$ is
			$(2\lambda +3)-$splittable,
	\end{itemize}
	where $K=K(\lambda,\delta)$ is a constant depending only on $\lambda$ and $\delta$.
\end{defthm}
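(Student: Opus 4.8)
The plan is to produce $\widetilde f$ in two stages: first extract from $A$ an ``almost splitting'' block structure whose defects are confined to bounded windows, and then realise a single quasiconformal map, supported in the strip $\{|\im z|<\delta\}$, that repairs these defects so that the image sequence $B$ becomes genuinely $(2\lambda+3)$-splittable. The repair map will itself be the boundary permutation of a splittable bijection with blocks of size $O(\lambda)$, so that Lemma \ref{L3} furnishes it with dilatation $K(\lambda,\delta)$ and with the identity off the strip; composing with the identity rearrangement then gives the asserted $\widetilde f$ and $K=K(\lambda,\delta)$.

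The engine is two quantitative consequences of the $\lambda$-three point condition. First, rewriting (\ref{3PC}) as a \emph{no-return principle}: if for some $n<m$ the orbit reaches $|a_n-a_m|=D$, then $|a_n-a_k|\ge D/\lambda$ for every $k>m$, so once the orbit leaves a value it can never return to within a definite fraction of that distance. Second, an \emph{adjacency bound}: writing $\tau(v)$ for the unique time with $a_{\tau(v)}=v$, one has $|\tau(v)-\tau(v+1)|\le 2\lambda$ for every $v$, because applying (\ref{3PC}) to any triple whose outer times are $\tau(v),\tau(v+1)$ forces every intermediate value to lie within distance $\lambda$ of $\{v,v+1\}$, and there are at most $2\lambda-1$ such integers available. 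Setting $\mu_\ell=\max_{j\le\ell}a_j=a_p$ and $\nu_\ell=\min_{j>\ell}a_j=a_q$ (so $p\le\ell<q$), I would bound the overlap $\mu_\ell-\nu_\ell$ by a constant $C(\lambda)$: applying (\ref{3PC'}) to the triples $(n,p,q)$ with $n<p$ shows that no past value $a_n$ with $n<p$ lies within $(\mu_\ell-\nu_\ell)/(\lambda+1)$ of $\nu_\ell$, and since $a_n\to\pm\infty$ forces the orbit nevertheless to descend to $\nu_\ell$ at the future time $q$, the no-return principle (applied from a reference just below this forbidden band) rules this out once $\mu_\ell-\nu_\ell$ exceeds a fixed multiple of $\lambda$.

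Granting the overlap bound, I would choose cut times $\{m_i\}_{i\in\mathbb Z}$ with $m_{i+1}-m_i\le C(\lambda)$ at which $\{a_j:j\le m_i\}$ differs from a down-set $(-\infty,V_i]_{\mathbb Z}$ by at most $C(\lambda)$ integers, all lying in $[\nu_{m_i},\mu_{m_i}]$. The blocks $I_i=[m_{i-1}+1,m_i]_{\mathbb Z}$ then carry value-sets that are intervals up to a bounded, endpoint-localised error. The correction permutation $\sigma$ of $\mathbb Z$ that pushes each misplaced integer into the block dictated by its value is supported in disjoint windows of size $O(\lambda)$ straddling consecutive cuts, hence is itself splittable with block bound $O(\lambda)$; Lemma \ref{L3} realises $\sigma=\widetilde f|_{\mathbb Z}$ quasiconformally inside the strip. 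Putting $b_n=\widetilde f(a_n)=\sigma(a_n)$, each $I_i$ now splits $B$ by Lemma \ref{L2}, and a bookkeeping of the window sizes is arranged to give block length $\le 2\lambda+3$, i.e. $(2\lambda+3)$-splittability of $B$.

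The main obstacle is the structural estimate: proving the uniform overlap bound $\mu_\ell-\nu_\ell\le C(\lambda)$, in which the hypothesis $a_n\to\pm\infty$ must be used in an essential way, since without it the example following Lemma \ref{L2} shows the conclusion can fail outright. A secondary difficulty is the sharp bookkeeping needed to push the generic $O(\lambda)$ window size down to exactly $2\lambda+3$, together with verifying that the local corrections at different cuts are genuinely disjoint and assemble into one globally splittable permutation, so that a single application of Lemma \ref{L3} controls the dilatation by $K(\lambda,\delta)$.
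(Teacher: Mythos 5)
Your overall architecture is sound and is essentially the paper's: reduce Theorem \ref{T1} to finding a permutation of the integer values that moves each integer only inside disjoint bounded windows (so that Lemma \ref{L1}, or a single application of Lemma \ref{L3}, realizes it quasiconformally inside the strip) and that turns $A$ into a splittable sequence. Your adjacency bound $|\tau(v)-\tau(v+1)|\le 2\lambda$ is also correct, and its proof mechanism (choosing triples whose denominator in (\ref{3PC}) equals $1$) is exactly the right one. The genuine gap is in the key overlap estimate $\mu_\ell-\nu_\ell\le C(\lambda)$, which you correctly identify as the main obstacle and which is precisely what Claims \ref{claim1} and \ref{claim2} of Section \ref{proofT1} supply in the paper. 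Write $D=\mu_\ell-\nu_\ell$, $p=\tau(\mu_\ell)\le\ell$, $q=\tau(\nu_\ell)>\ell$. Your band argument from (\ref{3PC'}) is fine, but your no-return step requires a \emph{time $n_0<p$} whose value sits just below the forbidden band, and nothing forces such a time to exist: every integer just below the band may instead be attained at a time in $(p,\ell\,]$. Indeed, the only constraint you invoke on values attained at times in $(p,q)$ comes from the triples $(p,m,q)$ and says they lie within $\lambda D$ of $\mu_\ell$, i.e. above $\nu_\ell-(\lambda-1)D$; since $D/(\lambda+1)\le(\lambda-1)D$ as soon as $\lambda^2\ge 2$, the whole region just below the band is available to those times. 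Thus the consequences of (\ref{3PC}) that you selected (band avoidance before $p$ plus no-return) are mutually consistent with arbitrarily large $D$ when $\lambda\ge\sqrt2$, and no contradiction follows.

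The repair is the same device you already used for the adjacency bound, and it is the paper's device: pass to the \emph{adjacent values} $\mu_\ell+1$ and $\nu_\ell-1$, whose times satisfy $r:=\tau(\mu_\ell+1)>\ell$ and $s:=\tau(\nu_\ell-1)\le\ell$, so that the relevant denominators are exactly $1$. If $r>q$, the triple $(p,q,r)$ in (\ref{3PC}) gives $D\le\lambda$; if $r<q$ and $s<p$, the triple $(s,p,q)$ gives $D+1\le\lambda$; if $r<q$ and $p<s$, then $p<s\le\ell<r$ and the triple $(p,s,r)$ gives $D+1\le\lambda$. Hence $D\le\lambda$ in all cases. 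This sharp additive bound also settles your ``secondary difficulty'': the fallback route of combining the band with your adjacency bound (which gives $q-p\le 4\lambda$ while the band forces roughly $D/(\lambda+1)$ integers into times in $(p,q)$) only yields $D=O(\lambda^2)$, hence $O(\lambda^2)$-splittable blocks rather than the $(2\lambda+3)$ asserted in the statement. With $D\le\lambda$ in hand, your cut times exist at every index, the correction windows can be taken disjoint of size $O(\lambda)$, and the single application of Lemma \ref{L3} is legitimate; at that point your construction coincides in substance with Steps 1--3 of the paper's proof, which organizes the same permutation as an infinite composition of Lemma \ref{L1} maps with disjoint supports.
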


As a direct corollary of Theorem \ref{T1}, we have the following;

\setcounter{alpthm}{1}
\begin{alpthm}{\rm (Extensibility of quasisymmetric automorphisms of $\mathbb{Z}$)}
\label{T3.2}

	For a bijection $f:\mathbb{Z}\rightarrow \mathbb{Z}$, the following
	conditions are quantitatively equivalent;
	\begin{enumerate}
		\item $f$ is $\eta-$quasisymmetric.
		\item $\{a_n:=f(n)\}_{n\in\mathbb{Z}}$ satisfies the $\lambda-$three point condition.
		\item $f$ admits a $K-$quasiconformal extension $\widetilde{f}:
			\mathbb{C}\rightarrow \mathbb{C}$.
	\end{enumerate}
\end{alpthm}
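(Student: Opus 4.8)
The plan is to prove the three conditions equivalent by establishing the cyclic chain $(1)\Rightarrow(2)\Rightarrow(3)\Rightarrow(1)$, recording at each arrow how the new constant is controlled by the old one, so that the equivalence is quantitative. Two of the three arrows should be essentially free. For $(3)\Rightarrow(1)$ I would simply invoke the egg-yolk principle (\cite[Theorem 11.14]{heinonen1}): a $K$-quasiconformal self-homeomorphism of $\mathbb{C}$ is $\eta$-quasisymmetric with $\eta=\eta(K)$, and restricting such an extension to $\mathbb{Z}$ makes $f$ itself $\eta$-quasisymmetric. For $(1)\Rightarrow(2)$ I would read the three point condition straight out of (\ref{qs}): for $n<m<k$ one has $|n-m|/|n-k|=(m-n)/(k-n)\le 1$, so monotonicity of $\eta$ gives $|a_n-a_m|/|a_n-a_k|\le\eta((m-n)/(k-n))\le\eta(1)$, that is, the $\lambda$-three point condition with $\lambda=\eta(1)$.

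The heart of the matter is $(2)\Rightarrow(3)$, and here I would let Theorem \ref{T1} and Lemma \ref{L3} carry the load after fixing some convenient $\delta\in(0,+\infty]$. First, Proposition \ref{P1} shows that the $\lambda$-three point condition forces $a_n\to\pm\infty$ or $a_n\to\mp\infty$ as $n\to\pm\infty$. Assuming the former, Theorem \ref{T1} supplies a $K(\lambda,\delta)$-quasiconformal map $h$ for which $B=\{b_n:=h(a_n)\}$ is $(2\lambda+3)$-splittable, and Lemma \ref{L3} then supplies a $K(2\lambda+3,\delta)$-quasiconformal map $g$ with $g(n)=b_n$ for all $n$. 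The extension is obtained by composition: setting $F:=h^{-1}\circ g$ gives $F(n)=h^{-1}(b_n)=a_n$, and since inverses and compositions of quasiconformal maps stay quasiconformal with dilatation controlled by the factors, $F$ is a $K$-quasiconformal extension of $f$ with $K$ depending only on $\lambda$.

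To cover the remaining asymptotic case I would use a reflection. If $a_n\to\mp\infty$, the sequence $\{-a_n\}_{n\in\mathbb{Z}}$ is again a bijective enumeration of $\mathbb{Z}$, it satisfies the same $\lambda$-three point condition because $t\mapsto -t$ leaves every ratio in (\ref{3PC}) unchanged, and it now tends to $\pm\infty$. The previous paragraph then yields a quasiconformal $\widetilde{F}$ with $\widetilde{F}(n)=-a_n$, and composing with the conformal involution $z\mapsto-z$ produces $F(z):=-\widetilde{F}(z)$, which has the same dilatation and satisfies $F(n)=a_n$.

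Because Theorem \ref{T1} is granted at this stage (its proof being deferred to Section \ref{proofT1}), the corollary itself is short; the only genuinely new point, and the step I expect to need the most care, is this \emph{orientation issue}. A quasisymmetric bijection of $\mathbb{Z}$ may reverse the global order near infinity, whereas the quasiconformal maps in this paper are by definition orientation preserving, so the two tail cases of Proposition \ref{P1} cannot be treated identically; the reflection trick above is what bridges them.
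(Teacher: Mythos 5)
Your proposal is correct and follows essentially the same route as the paper: the two easy arrows $(1)\Rightarrow(2)$ and $(3)\Rightarrow(1)$ are handled identically, and the key arrow $(2)\Rightarrow(3)$ uses Proposition \ref{P1}, Theorem \ref{T1}, and Lemma \ref{L3} with the same composition $\widetilde{f}_1^{-1}\circ\widetilde{f}_2$. Your explicit reflection argument for the case $a_n\to\mp\infty$ is simply a spelled-out version of the paper's one-line reduction via the conformal map $z\mapsto -z$.
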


\begin{proof}
	First, $(1)\Rightarrow (2)$ is clear. Indeed, for any integers $n<m<k$, we have
	\[
		\left| \frac{f(n)-f(m)}{f(n)-f(k)} \right|
		\leq \eta \left( \left|\frac{n-m}{n-k}\right|\right) \leq \eta(1).
	\]\vspace{0.5ex}
	
	Next, $(3)\Rightarrow (1)$ is also clear, since $K-$quasiconformal 
	self-homeomorphisms of $\mathbb{C}$
	are $\eta-$quasisymmetric with an $\eta$ depending only on $K$ 
	(thus the restrictions to $\mathbb{Z}$
	are also $\eta-$quasisymmetric with the same $\eta$).\vspace{1ex}
	
	Last, we prove that $(2)\Rightarrow (3)$. By Proposition \ref{P1}, $a_n
	\rightarrow \pm \infty\ (n\rightarrow\pm \infty)$ or $a_n\rightarrow \mp 
	\infty\ (n\rightarrow \pm \infty)$ holds. Since $z\mapsto -z\ (\mathbb{C}
	\rightarrow \mathbb{C})$ is conformal, we may assume the former case holds.
	Then we can apply Theorem \ref{T1}, that is, there exists a 
	$K_1=K_1(\lambda)-$quasiconformal mapping $\widetilde{f}_1:\mathbb{C}
	\rightarrow \mathbb{C}$ such that
	$B=\{b_n:=\widetilde{f}_1(a_n)\}_{n\in\mathbb{Z}}$ is $(2\lambda +3)-$splittable.
	Further, applying Lemma \ref{L3}, we have $K_2=K_2(\lambda)-$quasiconformal
	mapping $\widetilde{f}_2:\mathbb{C}\rightarrow\mathbb{C}$ such that
	$\widetilde{f}_2(n)=b_n=\widetilde{f}_1(a_n)=\widetilde{f}_1 \circ f(n)$.
	Therefore we obtain a desired extension $\widetilde{f}=\widetilde{f}_1^{-1}
	\circ \widetilde{f}_2$.
\end{proof}

Next, we consider quasisymmetric automorphisms of $E:=\{e^n\}_{n\in\mathbb{Z}}$.
In this case, we analogously obtain the following theorem;

\begin{defthm} \label{T3.3}
	For a bijection $f:E=\{e^n\}_{n\in\mathbb{Z}}\rightarrow E$, the
	following conditions are quantitatively equivalent;
	\begin{enumerate}
		\item $f$ is $\eta-$quasisymmetric.
		\item $b_n:=f(e^n) \rightarrow 0$ as $n\rightarrow -\infty$, and 
			$\{b_n\}_{n\in\mathbb{Z}}$ satisfies the $\lambda-$three point
			condition, that is, for any integers $n<m<k$ it holds that
			\[
				\left| \frac{b_n-b_m}{b_n-b_k} \right|
				=\left| \frac{f(e^n)-f(e^m)}{f(e^n)-f(e^k)}\right|
				\leq \lambda.
			\]
		\item $f$ admits a $K-$quasiconformal extension $\widetilde{f}:
			\mathbb{C}\rightarrow\mathbb{C}$.
	\end{enumerate}
\end{defthm}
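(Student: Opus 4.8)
The plan is to reduce everything to the integer case already settled in Theorem \ref{T3.2}, using the conformal logarithm and exponential as a change of variables. Throughout I write $b_n=f(e^n)=e^{\sigma(n)}$, so that $\sigma\colon\mathbb{Z}\to\mathbb{Z}$ is the bijection induced by $f$ under the identification $n\leftrightarrow e^n$. The implication $(3)\Rightarrow(1)$ is immediate from the egg-yolk principle exactly as in Theorem \ref{T3.2}. The three-point part of $(1)\Rightarrow(2)$ is equally direct: for $n<m<k$ one has $e^n<e^m<e^k$, hence $|e^n-e^m|/|e^n-e^k|<1$, and quasisymmetry gives $|b_n-b_m|/|b_n-b_k|\le\eta(1)=:\lambda$. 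For the decay $b_n\to 0$ as $n\to-\infty$ I would invoke the standard fact that an $\eta$-quasisymmetric embedding extends to the completion of its domain as an $\eta$-quasisymmetric embedding: the completion of $E$ in the Euclidean metric is $E\cup\{0\}$, and since $f$ is injective with $f(E)=E$, the extension must send the unique added point $0$ to $0$; this forces $b_n\to 0$ and simultaneously excludes the alternative $b_n\to+\infty$.

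The heart of the matter is $(2)\Rightarrow(3)$, and the key is a purely combinatorial translation of the hypotheses on $\{b_n\}$ into a condition on $\sigma$. Since the integers $\sigma(p),\sigma(q)$ are distinct, one has $|e^{\sigma(p)}-e^{\sigma(q)}|=e^{\max(\sigma(p),\sigma(q))}(1-e^{-|\sigma(p)-\sigma(q)|})$ with the last factor in $[1-e^{-1},1]$; hence the $\lambda$-three point condition for $\{b_n\}$ yields the one-sided bound
\begin{align}
\sigma(m)\le\max\bigl(\sigma(n),\sigma(k)\bigr)+C\qquad(n<m<k),\tag{$\star$}
\end{align}
with $C=\log\bigl(\lambda/(1-e^{-1})\bigr)$. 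I then use the decay hypothesis: given $i<j$, the condition $\sigma(n)\to-\infty$ lets me pick a far-left index $p<i$ with $\sigma(p)<\min(\sigma(i),\sigma(j))$, and applying $(\star)$ to $(p,i,j)$ gives $\sigma(i)\le\sigma(j)+C$. Thus $\sigma$ is \emph{$C$-almost increasing},
\begin{align}
\sigma(i)\le\sigma(j)+C\qquad(i<j).\tag{MONO}
\end{align}
A short computation shows that (MONO) implies the three point condition of Theorem \ref{T3.2} for $\sigma$ with $\lambda'=1+C$: for $n<m<k$, if $\sigma(k)\ge\sigma(n)$ then $\sigma(m)-\sigma(n)\le(\sigma(k)-\sigma(n))+C\le(1+C)|\sigma(n)-\sigma(k)|$ while $\sigma(n)-\sigma(m)\le C$, and if $\sigma(k)<\sigma(n)$ then (MONO) bounds $|\sigma(n)-\sigma(m)|\le C\le C\,|\sigma(n)-\sigma(k)|$. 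Moreover (MONO) bounds $\{\sigma(j)\}_{j\ge 0}$ below, and since these are distinct integers this forces $\sigma(n)\to+\infty$ as $n\to+\infty$, so $\sigma$ is an admissible input for the $\mathbb{Z}$-theory.

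With $\sigma$ satisfying the three point condition, the thin-strip construction behind Theorem \ref{T3.2} — that is, Theorem \ref{T1} together with Lemma \ref{L3} — furnishes a $K(\lambda')$-quasiconformal $\widetilde{\sigma}\colon\mathbb{C}\to\mathbb{C}$ with $\widetilde{\sigma}(n)=\sigma(n)$ and $\widetilde{\sigma}=\mathrm{id}$ outside the strip $\{|\im z|<\delta\}$, where I fix $\delta=\pi/2<\pi$. I then set $F=\exp\circ\widetilde{\sigma}\circ\log$ on the slit plane $\mathbb{C}\setminus(-\infty,0]$, with $\log$ the principal branch mapping the slit plane conformally onto $\{|\im z|<\pi\}$, and extend $F$ by the identity across $(-\infty,0]$. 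Because $\delta<\pi$, we have $\widetilde{\sigma}=\mathrm{id}$ near the edges $\{\im z=\pm\pi\}$, so $F=\exp\circ\log=\mathrm{id}$ in a neighbourhood of the slit and the gluing is smooth; $F$ is a composition of conformal maps with one $K(\lambda')$-quasiconformal map, hence $K(\lambda')$-quasiconformal off the removable slit and off the two removable points $0,\infty$ (continuity there uses precisely $\sigma(n)\to\mp\infty$). Finally $F(e^n)=\exp(\widetilde{\sigma}(n))=e^{\sigma(n)}=b_n=f(e^n)$, so $F$ extends $f$, giving $(2)\Rightarrow(3)$ and closing the cycle $(3)\Rightarrow(1)\Rightarrow(2)\Rightarrow(3)$.

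The step I expect to be the main obstacle is exactly the passage from $(2)$ to (MONO): the map $\log$ is \emph{not} quasisymmetric between $E$ and $\mathbb{Z}$ — it badly distorts the cluster of $E$ at $0$ — so quasisymmetry cannot simply be transported across it. What rescues the argument is that the decay hypothesis supplies far-left anchors with arbitrarily negative $\sigma$-value, and only in their presence does the one-sided bound $(\star)$ upgrade to the symmetric almost-monotonicity (MONO); making sure such anchors always exist and that (MONO) is genuinely equivalent to the three point condition for $\sigma$ is where the care lies. A secondary technical point is verifying that the conformally conjugated map is globally quasiconformal on $\mathbb{C}$, i.e. the removability of the slit and of the fixed points $0$ and $\infty$, which again rests on the growth of $\sigma$ at its two ends.
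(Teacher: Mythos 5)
Your proposal is correct and takes essentially the same route as the paper: your bound $(\star)$ plus the far-left-anchor argument and the transfer to a three point condition for $\sigma$ are exactly the paper's Lemmas \ref{L3.4} and \ref{L3.5}, after which the paper likewise applies Theorem \ref{T1} and Lemma \ref{L3} to the log-conjugated sequence and pushes the resulting strip map down by $\exp$, removing the singularity at the origin. The only (cosmetic) difference is the descent step: the paper extends the strip map $2\pi i$-periodically and projects it through the universal covering $z\mapsto e^z$, whereas you glue $\exp\circ\widetilde{\sigma}\circ\log$ with the identity across the slit $(-\infty,0]$; both hinge on the strip map being the identity for $|\im z|\geq \delta$ with $\delta\leq\pi$.
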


Assume $f:E\rightarrow E$ is $\eta-$quasisymmetric. Since quasisymmetric
mappings map Cauchy sequences to Cauchy sequences, we have 
$f(e^n)\rightarrow 0\ (n\rightarrow -\infty)$. Further, for any integers $n<m<k$
\[
	\left| \frac{f(e^n)-f(e^m)}{f(e^n)-f(e^k)}\right|
	\leq \eta\left( \left| \frac{e^n-e^m}{e^n-e^k}\right|\right)<\eta(1).
\]
Thus $(1)\Rightarrow (2)$ is valid, and $(3)\Rightarrow(1)$ is clear
for the same reason as the preceding proof.

\begin{defremark}
	In the condition $(2)$, $f(e^n)\rightarrow 0\ (n\rightarrow -\infty)$ is 
	necessary. More precisely, the $\lambda-$three point condition does not imply this
	property. In fact, for $f:e^n\mapsto e^{-n}\ (E\rightarrow E)$, the sequence
	$\{ b_n:=f(e^n)\}_{n\in \mathbb{Z}}$ satisfies the $1-$three point condition,
	but $b_n=f(e^n)\rightarrow +\infty\ (n\rightarrow -\infty)$.
\end{defremark}

Thus we only need to show $(2)\Rightarrow (3)$. To prove this, 
we prepare some lemmas. Let us assume $f:E\rightarrow E$ satisfies the condition $(2)$.\\

Let $a_n:=\log \circ f \circ \exp (n)$ (then $b_n=e^{a_n}$ holds). 
Since $b_n \rightarrow 0\ (n\rightarrow -\infty)$, we have $a_n\rightarrow -\infty\ 
(n\rightarrow -\infty)$. Note that $A:=\{a_n\}_{n\in \mathbb{Z}}=\mathbb{Z}$
is a bijective sequence.

\begin{deflem} \label{L3.4}
	There exists a constant $C_{\lambda}\geq 0$ depending only on $\lambda$,
	such that $a_k-a_{\ell} \leq C_{\lambda}$ holds if $k<\ell$ and $a_{\ell}<a_k$.
\end{deflem}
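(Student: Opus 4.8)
The plan is to work on the multiplicative side, with the positive reals $b_n=e^{a_n}$, and to feed the $\lambda$-three point condition of $\{b_n\}$ a well chosen third index. Note first that, after exponentiating, the hypotheses $k<\ell$ and $a_\ell<a_k$ are exactly $k<\ell$ and $0<b_\ell<b_k$; moreover $a_k-a_\ell=\log(b_k/b_\ell)$, so it suffices to bound the ratio $b_k/b_\ell$ from above by a constant depending only on $\lambda$.

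First I would exploit the one-sided decay $b_n\to 0\ (n\to-\infty)$, which holds by condition $(2)$, to select an auxiliary index $j<k$ with $b_j<b_\ell$. Then $j<k<\ell$ and $0<b_j<b_\ell<b_k$, so applying the $\lambda$-three point condition to the triple $j<k<\ell$ gives $\left|(b_j-b_k)/(b_j-b_\ell)\right|\le\lambda$. Since all three values are positive and ordered, the absolute values unfold to $(b_k-b_j)/(b_\ell-b_j)\le\lambda$, that is, $b_k\le\lambda b_\ell+(1-\lambda)b_j$. Because $\lambda\ge 1$ and $b_j>0$, the term $(1-\lambda)b_j$ is nonpositive, whence $b_k\le\lambda b_\ell$ and therefore $a_k-a_\ell\le\log\lambda$. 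Thus $C_\lambda=\log\lambda$ (which is $\ge 0$ since $\lambda\ge 1$) is the desired constant; note that the bound is independent of the chosen $j$.

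I expect no serious obstacle here; the single delicate point is the existence of the auxiliary index $j$, which is precisely where the decay hypothesis $b_n\to 0\ (n\to-\infty)$ enters. This is also where condition $(2)$ is genuinely needed beyond the three point condition alone: the remark following Theorem \ref{T3.3} shows that $e^n\mapsto e^{-n}$ satisfies the $1$-three point condition yet has $b_n\to+\infty\ (n\to-\infty)$, so without the decay the third point could not be placed below $b_\ell$ and the conclusion would genuinely fail.
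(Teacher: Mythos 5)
Your proof is correct and follows essentially the same route as the paper: both choose an auxiliary index $j<k$ with $b_j<b_\ell$ (using $b_n\to 0$ as $n\to-\infty$) and apply the $\lambda$-three point condition to the triple $j<k<\ell$. The only difference is that your algebra is slightly sharper, yielding $C_\lambda=\log\lambda$ where the paper's cruder estimate $\left|\frac{b_j-b_k}{b_j-b_\ell}\right|\geq e^{a_k-a_\ell}-1$ gives $C_\lambda=\log(\lambda+1)$.
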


\begin{proof}
	Assume that integers $k,\ell$ satisfy 
	$k<\ell$ and $a_{\ell}<a_k$. Since $a_n\rightarrow -\infty\ 
	(n\rightarrow -\infty)$, there exists an integer $j<k$ such that
	$a_j<a_{\ell}$. Thus, by the three point condition,
	\begin{eqnarray*}
		\lambda &\geq & \left|\frac{b_j-b_k}{b_j-b_{\ell}}\right| \\
				&=& \left| \frac{e^{a_j}-e^{a_k}}{e^{a_j}-e^{a_{\ell}}}\right|
				=\frac{e^{a_k-a_{\ell}}-e^{-(a_{\ell}-a_j)}}{1-e^{-(a_{\ell}-a_j)}}.
	\end{eqnarray*}
	Since $0<e^{-(a_{\ell}-a_j)}<1$, we have $\lambda \geq e^{a_k-a_{\ell}} -1$.
	Therefore $a_k-a_{\ell}\leq \log (\lambda +1)=:C_{\lambda}$.
\end{proof}
	
\begin{deflem} \label{L3.5}
	$\{a_n\}_{n\in \mathbb{Z}}$ satisfies the $(C_{\lambda}+1)-$three point condition,
	where $C_{\lambda}$ is a constant in Lemma \ref{L3.4}.
\end{deflem}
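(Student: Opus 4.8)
The plan is to read off the $(C_{\lambda}+1)$-three point condition for $A=\{a_n\}$ directly from the one-sided estimate of Lemma~\ref{L3.4}, using crucially that the $a_n$ are distinct integers, so that $|a_n-a_k|\ge 1$ whenever $n\neq k$. Fix integers $n<m<k$ (the degenerate cases are covered by Remark~\ref{remark1}); the goal is the single inequality $|a_n-a_m|\le (C_{\lambda}+1)\,|a_n-a_k|$. The only tool available is that a \emph{forward} step which \emph{decreases} the value can do so by at most $C_{\lambda}$; no comparable control exists for forward increases, and organising the argument around this asymmetry is the whole point.

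If $a_m<a_n$, then Lemma~\ref{L3.4} applied to the pair $(n,m)$ (here $n<m$ and $a_m<a_n$) gives $a_n-a_m\le C_{\lambda}$ at once, whence $|a_n-a_m|\le C_{\lambda}\le (C_{\lambda}+1)\,|a_n-a_k|$ because $|a_n-a_k|\ge 1$. This disposes of every decreasing excursion and is where the integrality lower bound first enters.

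The substantive case is $a_m>a_n$, where Lemma~\ref{L3.4} says nothing directly; this is the main obstacle, and the idea is to route the estimate through $a_k$. If $a_m\le a_k$, then $|a_n-a_m|=a_m-a_n\le a_k-a_n=|a_n-a_k|$ and we are done. If instead $a_m>a_k$, apply Lemma~\ref{L3.4} to the pair $(m,k)$ to obtain $a_m-a_k\le C_{\lambda}$, i.e. $a_m\le a_k+C_{\lambda}$, and then split on the sign of $a_k-a_n$. When $a_k\ge a_n$ one gets $|a_n-a_m|=a_m-a_n\le (a_k-a_n)+C_{\lambda}\le (C_{\lambda}+1)\,|a_n-a_k|$, again absorbing the additive $C_{\lambda}$ into $|a_n-a_k|\ge 1$; when $a_k<a_n$ one has $a_m-a_n<a_m-a_k\le C_{\lambda}$, so $|a_n-a_m|\le C_{\lambda}\le (C_{\lambda}+1)\,|a_n-a_k|$. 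Collecting the cases yields (\ref{3PC}) for $A$ with constant $C_{\lambda}+1$. I expect the bookkeeping of the subcase $a_m>a_n>a_k$ to be the only place requiring care with the direction of the inequality $a_m-a_n<a_m-a_k$; everything else is forced by Lemma~\ref{L3.4} together with the spacing $|a_n-a_k|\ge 1$.
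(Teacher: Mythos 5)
Your proof is correct and follows essentially the same route as the paper's: both arguments rest on applying Lemma~\ref{L3.4} to the pair $(n,m)$ when $a_m<a_n$ and to the pair $(m,k)$ when $a_k<a_m$, and on absorbing the additive $C_{\lambda}$ via the integer-spacing bound $|a_n-a_k|\geq 1$. The only difference is bookkeeping: the paper first dispatches the trivial case $|a_n-a_m|\leq|a_n-a_k|$ and then handles the remainder with a single triangle-inequality estimate, whereas you split into slightly finer subcases, but the content is identical.
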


\begin{proof}
	Let $n<m<k$. If $|a_n-a_k|\geq |a_n-a_m|$, then $|a_n-a_m|/|a_n-a_k|
	\leq 1 \leq C_{\lambda}+1$. Thus we consider the case of $|a_n-a_k|< |a_n-a_m|$.
	
	First, if $a_n>a_m$, then we have
	\[
		\left| \frac{a_n-a_m}{a_n-a_k} \right| \leq a_n-a_m \leq C_{\lambda}
		<1+C_{\lambda}.
	\]
	In this estimation, remark that $a_n$ and $a_k$ are distinct integers,
	that is, $|a_n-a_k|\geq 1$ holds.
	
	Next, if $a_n <a_m$, by $|a_n-a_k|<|a_n-a_m|$ it holds $a_k<a_m$.
	Thus, by Lemma \ref{L3.4} we have $0<a_m-a_k\leq C_{\lambda}$ and
	\[
		\left|\frac{a_n-a_m}{a_n-a_k}\right| \leq
		\frac{|a_n-a_k|+|a_k-a_m|}{|a_n-a_k|} \leq 1+ C_{\lambda}.
	\]
\end{proof}

\begin{defprop} \label{P3.2}
	$(2) \Rightarrow (3)$ in Theorem \ref{T3.3} holds.
\end{defprop}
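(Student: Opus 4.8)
The plan is to move the whole problem to the additive picture carried by the bijective sequence $A=\{a_n\}_{n\in\mathbb{Z}}$ and then transport the construction back to $E$ via the exponential map. The key identity is $f(e^n)=b_n=e^{a_n}$, so on $E$ the map $f$ is the conjugate of the correspondence $n\mapsto a_n$ by $\exp$. It therefore suffices to build a quasiconformal self-map $G$ of $\mathbb{C}$ with $G(n)=a_n$ for every $n$ which equals the identity outside a thin horizontal strip, and then to conjugate $G$ by $\exp$.

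To produce $G$ I would first check the hypotheses of Theorem \ref{T1} for $A$. By Lemma \ref{L3.5} the sequence $A$ satisfies the $(C_\lambda+1)$-three point condition, and since $a_n\rightarrow-\infty\ (n\rightarrow-\infty)$, Proposition \ref{P1} forces $a_n\rightarrow\pm\infty\ (n\rightarrow\pm\infty)$. Fix any $\delta\in(0,\pi)$. Theorem \ref{T1} then gives a $K_1=K_1(\lambda)$-quasiconformal map $\widetilde{g}$, equal to the identity off $\{z;\ |\im z|<\delta\}$, such that $\{c_n:=\widetilde{g}(a_n)\}_{n\in\mathbb{Z}}$ is $\bigl(2(C_\lambda+1)+3\bigr)$-splittable; Lemma \ref{L3} in turn gives a $K_2=K_2(\lambda)$-quasiconformal map $\widetilde{h}$, also the identity off $\{z;\ |\im z|<\delta\}$, with $\widetilde{h}(n)=c_n$. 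Then $G:=\widetilde{g}^{-1}\circ\widetilde{h}$ is $K_1K_2$-quasiconformal, satisfies $G(n)=a_n$ for all $n$, and is the identity off $\{z;\ |\im z|<\delta\}$. In particular $G$ fixes the lines $\im z=\pm\pi$ pointwise, hence maps the strip $S:=\{z;\ |\im z|<\pi\}$ onto itself.

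It remains to conjugate by the exponential. Since $\exp|_S$ is a conformal bijection of $S$ onto $\Omega:=\mathbb{C}\setminus(-\infty,0]$, the map $H:=\exp\circ G\circ(\exp|_S)^{-1}$ is a $K_1K_2$-quasiconformal self-map of $\Omega$, and $H(e^n)=e^{a_n}=f(e^n)$ by construction. Because $\delta<\pi$, $G$ is the identity on $\{z;\ \delta\leq|\im z|<\pi\}$, whose $\exp$-image is a neighborhood in $\Omega$ of the slit $(-\infty,0)$; thus $H$ coincides with the identity near the slit and extends continuously across it, yielding a $K_1K_2$-quasiconformal homeomorphism of $\mathbb{C}\setminus\{0\}$ onto itself. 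Setting $K:=K_1K_2=K(\lambda)$, this is the desired extension once the origin is filled in.

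The step I expect to be the main obstacle is the final removability at the origin. The origin is an endpoint of the slit but $H$ is \emph{not} simply the identity on a full punctured neighborhood of it, since near $0$ the identity region covers only the directions $\arg z$ close to $\pm\pi$. One must argue that $H(w)\rightarrow 0$ as $w\rightarrow 0$ --- which follows because $H$ agrees with the identity arbitrarily close to the slit, so the $H$-image of a small circle about $0$ is a Jordan curve winding once about $0$ and staying near it --- and then invoke the removability of an isolated singularity of a quasiconformal map to conclude that $H$ extends to a $K$-quasiconformal self-map $\widetilde{f}$ of $\mathbb{C}$ with $\widetilde{f}|_E=f$.
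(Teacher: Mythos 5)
Your proposal is correct and takes essentially the same route as the paper: reduce to the logarithmic sequence $a_n=\log b_n$ via Lemmas \ref{L3.4}--\ref{L3.5} and Proposition \ref{P1}, build a strip-supported quasiconformal map interpolating $n\mapsto a_n$ via Theorem \ref{T1} and Lemma \ref{L3}, transfer the construction through the exponential, and remove the isolated singularity at the origin. The only difference is bookkeeping: the paper extends $\widetilde{G}$ $2\pi i$-periodically and projects it through the universal covering $z\mapsto e^z$ onto $\mathbb{C}^{\ast}$, whereas you conjugate on a fundamental strip and glue across the slit $(-\infty,0)$; both yield the same quasiconformal map on the punctured plane, handled at $0$ by the same removability theorem.
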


\begin{proof}
	By Lemma \ref{L3.5} and Proposition \ref{P1}, the sequence $A=
	\{a_n:=\log b_n\}_{n\in\mathbb{Z}}=\mathbb{Z}$ satisfies the
	$(C_{\lambda}+1)-$three point condition and $a_n\rightarrow \pm \infty\ 
	(n\rightarrow \pm \infty)$. Thus by Theorem \ref{T1} and Lemma \ref{L3}, there exists 
	a $K=K(\lambda)-$quasiconformal mapping $\widetilde{G}:\mathbb{C}\rightarrow
	\mathbb{C}$ such that 
	\begin{itemize}
		\item $\widetilde{G}(n)=a_n$,
		\item $\widetilde{G}=id$ on $\mathbb{C}\setminus \{z;\ |\im z|< \pi\}$.
	\end{itemize}
	Define a homeomorphism $\widetilde{F}:\mathbb{C}\rightarrow \mathbb{C}$ by
	\[
		\widetilde{F}(z+2n\pi i):=\widetilde{G}(z)+2n\pi i,
	\]
	for $n\in\mathbb{Z}$, and $z\in \{z;\ -\pi<\im z\leq \pi\}$.
	Clearly, $\widetilde{F}$ is $K=K(\lambda)-$quasiconformal
	and $\widetilde{F}(n)=a_n\ (n\in \mathbb{Z})$.
	Thus, the projection of $\widetilde{F}$ with respect to the
	universal covering $\pi:z\mapsto e^z\ (\mathbb{C}\rightarrow \mathbb{C}^{\ast})$,
	that is, the mapping $\widetilde{f}:\mathbb{C}^{\ast}\rightarrow
	\mathbb{C}^{\ast}$ defined by $\widetilde{f}\circ \pi =\pi \circ \widetilde{F}$
	gives a $K-$quasiconformal extension 
	of $f$. Since $b_n=\widetilde{f}(e^n) \rightarrow 0\ 
	(n\rightarrow -\infty)$,  we
	obtain a desired extension by the removable singularity theorem for 
	quasiconformal mappings.
\end{proof}

\begin{defremark}
	By the construction of $\tilde{f}$, it turns out that
	we can choose the quasiconformal extension in Theorem \ref{T3.3} 
	so that it is identity on the negative real axis.
\end{defremark}

    \medskip
\section{Proof of Theorem \ref{T1}}
\label{proofT1}

We devote this section to the proof of Theorem \ref{T1}.
The statement of Theorem \ref{T1} is the following;

\begin{empthm}
	Let $A=\{a_n\}_{n\in\mathbb{Z}}=\mathbb{Z}$ be bijective, 
	$\lambda \geq 1$, and $\delta\in (0,+\infty]$. If $A$ satisfies the $\lambda-$three
	point condition and $a_n\rightarrow \pm\infty\ (n\rightarrow \pm\infty)$,
	then there exists a $K=K(\lambda,\delta)-$quasiconformal mapping
	$\widetilde{f}:\mathbb{C}\rightarrow \mathbb{C}$ such that
	\begin{itemize}
		\item $\widetilde{f}=id$ on $\mathbb{C}\setminus \{z;\ |\im z|< \delta\}$,
		\item $B=\{b_n:=\widetilde{f}(a_n)\}_{n\in\mathbb{Z}}$ is
			$(2\lambda +3)-$splittable,
	\end{itemize}
	where $K=K(\lambda,\delta)$ is a constant depending only on $\lambda$ and $\delta$.
\end{empthm}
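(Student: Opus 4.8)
The plan is to build $\widetilde f$ as a superposition of independent local deformations, each confined to its own vertical strip and each permuting only boundedly many integers, so that the maximal dilatation is controlled by $\lambda$ and $\delta$ alone via Lemma~\ref{L1} and the disjoint--strip gluing already used in Lemma~\ref{L3}. Thus the whole problem reduces to a purely combinatorial statement about the sequence $A$: I would partition the \emph{value} line $\mathbb{Z}=\{a_n\}_{n\in\mathbb{Z}}$ into consecutive windows $\cdots<W_{-1}<W_0<W_1<\cdots$ of length at most $2\lambda+3$, such that after reordering the finitely many integers inside each $W_i$ the resulting sequence $B$ actually splits at the window boundaries. Here ``$W_i$ splits $B$'' is exactly the pair of conditions in Definition~\ref{D3.1}, so proving $(2\lambda+3)$-splittability of $B$ amounts to exhibiting these windows together with the required length bound.

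The engine behind the combinatorics is the following \emph{no-return} reformulation of the three point condition: if the orbit reaches distance $D=|a_n-a_m|$ from $a_n$ at some index $m>n$, then $|a_n-a_j|\ge D/\lambda$ for every $j>m$; that is, once the orbit has moved a distance $D$ away from $a_n$ it can never come back closer than $D/\lambda$. Applying this to starting points chosen near a given level $c$, together with $a_n\to\pm\infty$ (so that every value $\le c$ occupies an index bounded above and every value $>c$ an index bounded below), I would show that the \emph{crossings} across the level $c$ --- pairs $j<j'$ with $a_{j'}\le c<a_j$ --- are bounded both in number and in depth by a constant depending only on $\lambda$. Quantitatively, I would prove that for each $c$ the overlap $\max_{j\le\ell}a_j-\min_{j>\ell}a_j$ at the corresponding index $\ell$ is at most a fixed multiple of $\lambda$, and deduce that the \emph{clean} levels (those with no crossing) are spaced at most $2\lambda+3$ apart. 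These clean levels furnish the window boundaries.

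With the windows in hand, the construction of $\widetilde f$ is routine. On the $i$-th window the map is an application of Lemma~\ref{L1} to the induced permutation of the boundedly many integers of $W_i$, carried out inside the strip $\{k_i+\tfrac12<\re z<k_{i+1}+\tfrac12,\ |\im z|<\delta\}$ and equal to the identity on its boundary; since these strips are pairwise disjoint, patching them exactly as in the proof of Lemma~\ref{L3} yields a global homeomorphism of $\mathbb{C}$, equal to the identity off $\{|\im z|<\delta\}$, whose maximal dilatation is the supremum of the finitely many local ones, hence bounded by $K(\lambda,\delta)$ (the window length being at most $2\lambda+3$, Lemma~\ref{L1} is invoked only with a bounded parameter). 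By design $B=\{\widetilde f(a_n)\}$ then splits at every window boundary, with blocks of length $\le 2\lambda+3$, which is precisely $(2\lambda+3)$-splittability.

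The main obstacle is the middle step: converting the qualitative no-return property into the sharp quantitative bound on the crossings, and in particular extracting clean levels that are genuinely at most $2\lambda+3$ apart. The difficulty is that the three point condition is scale--invariant --- it controls returns only relative to how far the orbit has already travelled --- so one must locate the starting points of the estimate at the right scale near each level $c$ rather than far below it, and then argue that the boundedly many out-of-order integers near each boundary can be absorbed into a single window without letting its length grow past $2\lambda+3$. Establishing this estimate, and verifying that the resulting local permutations match along the common strip boundaries so that the dilatation stays uniform, is where the real work lies.
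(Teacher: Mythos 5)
Your reduction of the theorem to a combinatorial statement plus the Lemma~\ref{L1}/Lemma~\ref{L3} gluing in disjoint strips is sound, and your two auxiliary claims --- the no-return reformulation of the three point condition, and the bounded overlap $\max_{j\le\ell}a_j-\min_{j>\ell}a_j\le C(\lambda)$ at every index --- are both correct (the latter is essentially Claim~1 of the paper's proof). The fatal step is the deduction that \emph{clean} levels exist with gaps at most $2\lambda+3$. This is false: bounded overlap at every level does not produce even a single clean level. The paper's own example (the sequence $a_{6n}=6n$, $a_{6n+1}=6n+2$, $a_{6n+2}=6n-2$, $a_{6n+3}=6n+3$, $a_{6n+4}=6n+5$, $a_{6n+5}=6n+1$ of Figure~\ref{sbs1}) satisfies the three point condition for some finite $\lambda$ (it is quasi-periodic with bounded displacement) and has uniformly bounded overlap, yet \emph{every} level $c$ is crossed: for $c\in[6n-2,6n+2)$ take the pair $(j,j')=(6n+1,6n+2)$, and for $c\in[6n+1,6n+5)$ take $(6n+4,6n+5)$. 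So the set of clean levels can be empty. Note also the internal inconsistency in your plan: a clean level of $A$ corresponds exactly to a splitting point in the sense of Definition~\ref{D3.1}, so if clean levels with gaps $\le 2\lambda+3$ always existed, then $A$ would already be $(2\lambda+3)$-splittable and $\widetilde f=id$ would prove the theorem --- the example shows precisely that this is not so. Nor can the plan be repaired by a cleverer choice of windows: since each of your local maps permutes only the integers inside its own window $W_i$, whether a boundary level $c$ between two windows is crossed is invariant under all of them ($b_j\le c\Leftrightarrow a_j\le c$), so $B$ can split at the window boundaries only if those boundaries were already clean for $A$.

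The paper's proof avoids this by being iterative, and its splitting points are \emph{created} by the deformation rather than found in $A$. One first isolates the index block $I_0=[k_0+1,k_1]_{\mathbb{Z}}$ around level $0$, proves (your overlap estimate) that its values lie in a value-window of length $\le 2\lambda+3$, and then applies Lemma~\ref{L1} inside that window to push the finitely many integers of the window that are visited at indices $<k_0+1$ below the block $[m_0,m_1]_{\mathbb{Z}}$ and those visited at indices $>k_1$ above it; this manufactures a clean level $m_1$ in the \emph{interior} of the window, not at its boundary. Only after this modification is the next block $I_1$ defined, in terms of the already-deformed sequence $A_0$ (whose values above the first window agree with those of $A$), and the process is repeated inductively in both directions; disjointness of the strips keeps the dilatation bounded by $K(\lambda,\delta)$. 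If you want to salvage your write-up, replace ``window boundaries are clean levels of $A$'' by this adaptive scheme in which each window carries a permutation producing one clean level inside it.
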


We would like to start proving this claim. Throughout this section,
we assume $A=\{a_n\}_{n\in \mathbb{Z}}=\mathbb{Z}$ satisfies the $\lambda-$three
point condition $(\lambda \geq 1)$ and 
$a_n\rightarrow \pm \infty\ (n\rightarrow \pm \infty)$.
Further, let $\delta\in (0,+\infty]$.

\subsection*{Step1}
By translation, we may assume $a_0=0$.
Since $a_n\rightarrow \pm \infty\ (n\rightarrow \pm \infty)$,
there uniquely exist integers $k_0$ and $k_1$ such that
\begin{align}
	a_{k_0+1}\geq 0\ \ \ &{\rm and}\ \ \ n<k_0+1\Rightarrow a_n<0,
		\tag{C0} \label{A}\\
	a_{k_1}\leq 0\ \ \ &{\rm and}\ \ \ n>k_1\Rightarrow a_n>0.
		\tag{C1} \label{B}
\end{align}
Remark that since $a_0=0$, it holds that $k_0<0\leq k_1$.

\begin{figure}[htbp]
       \begin{center}
           \includegraphics[width=12cm]{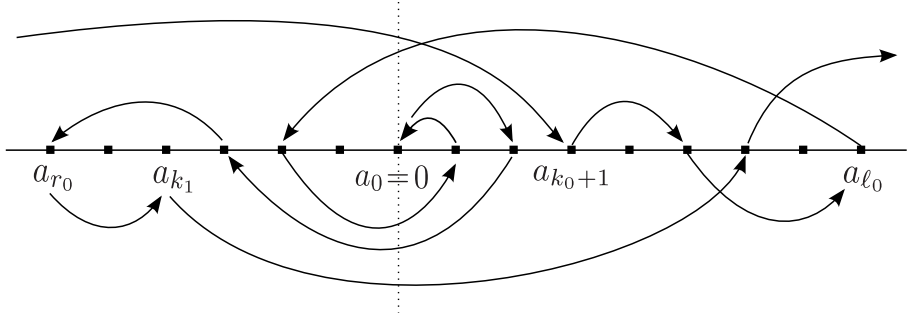} 
       \end{center}
       \caption{:　Orbit of $A=\{a_n\}_{n\in\mathbb{Z}}$}   \label{pT11}
\end{figure}

Let $I_0:=[k_0+1,k_1]_{\mathbb{Z}}$, and let $\ell_0, r_0\in I_0$
satisfy $a_{\ell_0}=\displaystyle \max_{j\in I_0} a_j$, and
$a_{r_0}=\displaystyle \min_{j\in I_0} a_j$ (see Figure \ref{pT11}).
Further, let $\lambda':=\lambda +1$.

\begin{defclaim} \label{claim1}
	$|a_{r_0}|,\ |a_{\ell _0}|\leq \lambda'$ and
	$|I_0|\leq 2\lambda'+1$ hold.
\end{defclaim}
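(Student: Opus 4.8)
The plan is to bound the two extreme values $a_{\ell_0}$ and $a_{r_0}$ first, and then to bound $|I_0|$ by counting. Set $M:=a_{\ell_0}=\max_{j\in I_0}a_j$ and $-m:=a_{r_0}=\min_{j\in I_0}a_j$; since $a_0=0$ and $0\in I_0$, we have $M\ge 0\ge -m$, so it suffices to prove $M\le\lambda'$ and $m\le\lambda'$. The guiding idea is that the integers $M+1$ and $-m-1$, being strictly larger (resp.\ smaller) than \emph{every} value attained on $I_0$, are forced to be attained at indices lying outside $I_0$, and feeding these indices into the three point condition pins down $M$ and $m$. I emphasise that $I_0$ itself need not split $A$: we only know $a_n>0$ for $n>k_1$ and $a_n<0$ for $n<k_0+1$, not that these values exceed $M$ or lie below $-m$. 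Hence Lemma \ref{L2} is unavailable here, and the adjacent integer values are the correct comparison points.

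To bound $M$ I would assume $M\ge 1$ (the case $M=0$ being trivial) and let $p$ be the unique index with $a_p=M+1$. Then $a_p>0$ forces $p>k_0$, while $a_p>M=\max_{j\in I_0}a_j$ forces $p\notin I_0$, so $p>k_1$; and $a_{\ell_0}=M>0\ge a_{k_1}$ gives $\ell_0\ne k_1$, whence $\ell_0<k_1<p$. Applying the three point condition (\ref{3PC}) to this triple yields
\[
	M\le M-a_{k_1}=|a_{\ell_0}-a_{k_1}|\le\lambda\,|a_{\ell_0}-a_p|=\lambda ,
\]
using $a_{k_1}\le 0$ and $|a_{\ell_0}-a_p|=1$. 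Symmetrically, assuming $m\ge 1$ and letting $p'$ be the index with $a_{p'}=-m-1$, one gets $p'<k_0+1$ (since $a_{p'}<0$ gives $p'\le k_1$ and $a_{p'}<-m$ gives $p'\notin I_0$) and $r_0\ne k_0+1$, so $p'<k_0+1<r_0$; the condition (\ref{3PC}) applied here gives $m+1\le\lambda$. Thus $M\le\lambda\le\lambda'$ and $m\le\lambda'$.

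For $|I_0|$ I would argue by counting: the values $\{a_j\}_{j\in I_0}$ are $|I_0|$ distinct integers, all contained in $[a_{r_0},a_{\ell_0}]=[-m,M]$, an interval holding exactly $M+m+1$ integers. Therefore $|I_0|\le M+m+1\le 2\lambda'+1$ by the bounds just obtained.

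I expect the only real difficulty to be organisational rather than computational: one must realise that the right objects to compare against are the \emph{adjacent} integer values $M+1$ and $-m-1$ (rather than attempting to invoke a splitting property), and then verify that, together with the boundary indices $k_1$ (where $a_{k_1}\le 0$) and $k_0+1$ (where $a_{k_0+1}\ge 0$), they are ordered exactly as $\ell_0<k_1<p$ and $p'<k_0+1<r_0$, so that (\ref{3PC}) applies with no further case distinction. Once this ordering is secured the estimates drop out immediately.
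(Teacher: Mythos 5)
Your proof is correct and takes essentially the same route as the paper: both compare $a_{\ell_0}$ with the adjacent value $a_{\ell_0}+1$ (necessarily attained at an index beyond $k_1$), apply the three point condition to the ordered triple $\ell_0,\ k_1,\ p$ (and symmetrically $p',\ k_0+1,\ r_0$), and then bound $|I_0|$ by counting integers in $[a_{r_0},a_{\ell_0}]$. The only cosmetic differences are that you dispose of the degenerate case $\ell_0=k_1$ by treating $M=0$ separately, where the paper instead invokes Remark \ref{remark1} to allow coincident indices in (\ref{3PC}), and that your choice of base points lets you use (\ref{3PC}) throughout rather than the symmetric form (\ref{3PC'}).
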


\begin{proof}
	By the three point condition, the following hold for any
	integers $n,m,k\in\mathbb{Z}$
	(see Remark \ref{remark1});
	\begin{eqnarray*} \displaystyle
		n\leq m<k\ \ &\Longrightarrow & \ \ \left|\frac{a_n-a_m}{a_n-a_k}\right|
			\leq \lambda <\lambda' ,\\
		n< m\leq k\ \ &\Longrightarrow & \ \ \left|\frac{a_k-a_m}{a_k-a_n}\right|
			\leq \lambda'.
	\end{eqnarray*}
	Let $j$ be an integer such that $a_j=a_{\ell _0}+1$. Then by the condition
	(\ref{A}), we have $j>k_1(\geq \ell _0)$. Thus,
	\[
		\lambda' \geq \left| \frac{a_{\ell_0}-a_{k_1}}{a_{\ell_0}-a_j}\right|
		=|a_{\ell_0}-a_{k_1}|.
	\]
	Since $a_{\ell_0}\geq 0$ and $a_{k_1}\leq 0$, we have $|a_{\ell_0}|\leq \lambda'$.
	Similarly, we can show $|a_{r_0}|\leq \lambda'$.
	Further, we have $|I_0|\leq a_{\ell_0}-a_{r_0}+1\leq 2\lambda'+1$.
\end{proof}

Next, we sort the interval $[a_{r_0},a_{\ell_0}]_{\mathbb{Z}}$ appropriately
by a global quasiconformal mapping. Let $\{c_n\}_{n=m_0}^{m_1}$ be the
unique sequence such that
\[
	\left\{ 
		\begin{array}{l}
			\{c_n\}_{n=m_0}^{m_1}=\{a_n\}_{n=k_0+1}^{k_1},\vspace{1ex}\\
			a_{r_0}=c_{m_0}<c_{m_0+1}<\cdots
			<c_0=a_0=0<\cdots<c_{m_1}=a_{\ell_0}.
		\end{array}
	\right.
\]
That is, $\{c_n\}_{n=m_0}^{m_1}$ is the ascending sort of $\{a_n\}_{n\in I_0}$
normalized by $c_0=a_0(=0)$. Remark that $m_0$ and $m_1$ are also 
uniquely determined by the above conditions.
Similarly, let $\{d_n\}_{n=p_0}^{-1}$ and $\{d_n\}_{n=1}^{p_1}$ be the
unique sequences such that
\[
	\left\{
		\begin{array}{l}
			\{d_n\}_{n=p_0}^{-1}\cup \{d_n\}_{n=1}^{p_1}=
				[a_{r_0},a_{\ell_0}]_{\mathbb{Z}}\setminus \{a_n\}_{n\in I_0},\vspace{1ex}\\
			d_{p_0}<d_{p_0+1}<\cdots<d_{-1}<0<d_{1}<\cdots<d_{p_1}.
		\end{array}
	\right.
\]
$p_0$ and $p_1$ are also uniquely determined by the conditions $m_0+p_0=a_{r_0}$ and
$m_1+p_1=a_{\ell_0}$. (If $p_0=0$ or $p_1=0$, then we assume the corresponding
sequences are empty.)

By Lemma \ref{L1} and Claim \ref{claim1}, there exists a
$K=K(\lambda,\delta)-$quasiconformal mapping $\widetilde{f}_0:
\mathbb{C}\rightarrow\mathbb{C}$ such that
\begin{enumerate}
	\item $\widetilde{f}_0=id$ on $\mathbb{C}\setminus
		\left\{ z\in\mathbb{C};\ \displaystyle 
		a_{r_0}-\frac{1}{2}<\re z< a_{\ell_0}+\frac{1}{2},\ |\im z|< \delta
		\right\}$,
	\item $\widetilde{f}_0(d_j)=m_0+j\ \ \ (j=-1,-2,\cdots,p_0)$,\vspace{1ex}\\
		$\widetilde{f}_0(c_j)=j\hspace{8ex} (j=m_0,m_0+1,\cdots,m_1)$,\vspace{1ex}\\
		$\widetilde{f}_0(d_j)=m_1+j\ \ \ (j=1,2,\cdots,p_1)$.
\end{enumerate}
Using this mapping, we set $A_0:=\{a_n^0:=\widetilde{f}_0(a_n)\}_{n\in \mathbb{Z}}$
(see Figure \ref{pT12}).
Then $I_0=[k_0+1,k_1]_{\mathbb{Z}}$ splits $A_0$ and $|I_0|\leq 2\lambda'+1=
2\lambda+3$.\vspace{3ex}

\begin{figure}[htbp]
       \begin{center}
           \includegraphics[width=13cm]{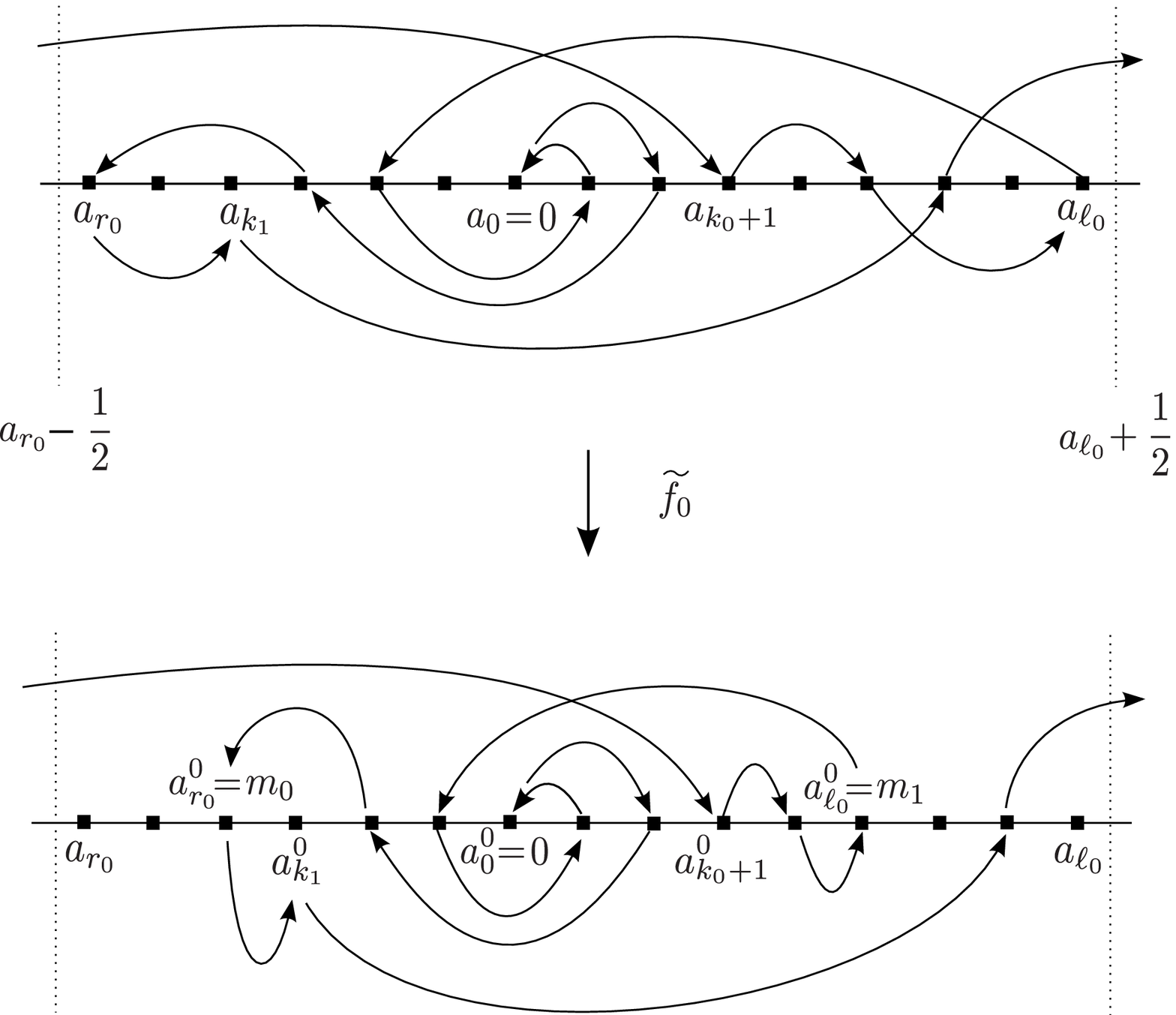} 
       \end{center}
       \caption{:　permutation by $\widetilde{f}_0$}   \label{pT12}
\end{figure}

\subsection*{Step2}
By Lemma \ref{L2}, $\{a_n^0\}_{n\in I_0}=[m_0,m_1]_{\mathbb{Z}}$ and
$\{a_n^0\}_{n\in \mathbb{Z}_{>k_1}}=[m_1+1,+\infty)_{\mathbb{Z}}
=\{m_1+1,m_1+2,\cdots\}$.
Let $k_2$ be the maximum integer which satisfies 
$a_{k_2}^0\in [m_1+1,a_{\ell_0}+1]_{\mathbb{Z}}$,
and let $I_1:=[k_1+1,k_2]_{\mathbb{Z}}$. Further 
we let $\ell_1 \in \mathbb{Z}$
be the integer such that $a_{\ell_1}^0=\displaystyle\max_{j\in I_1}a_j^0$.

Remark that since $\widetilde{f}_0=id$ on $\{z;\ \re z>a_{\ell_0}+1/2\}$,
it holds $a_n^0=a_n$ if $a_n^0\geq a_{\ell_0}+1$.
In particular, $a_{\ell_1}^0=a_{\ell_1}$.

\begin{defclaim} \label{claim2}
	$|a_{\ell_1}-(a_{\ell_0}+1)|\leq \lambda'$, and $|I_1|\leq 2\lambda'+1$ hold.
\end{defclaim}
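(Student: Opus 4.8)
The plan is to mirror the proof of Claim \ref{claim1}, now carried out for the block $I_1$ lying immediately to the right of $I_0$. The baseline level $0$ (realized through the endpoint $k_1$ with $a_{k_1}\le 0$) in Claim \ref{claim1} will be replaced here by the level $a_{\ell_0}+1$ (realized through the right endpoint $k_2$ of $I_1$), while the ``one step up'' value $a_{\ell_0}+1$ from Claim \ref{claim1} is replaced by $a_{\ell_1}+1$. The two estimates then follow from a single application of the three point condition to a suitably ordered triple.

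First I would establish the lower bound $a_{\ell_1}\ge a_{\ell_0}+1$. Let $j_\ast$ be the index with $a_{j_\ast}=a_{\ell_0}+1$. Since $a_{\ell_0}+1$ exceeds $\max_{j\in I_0}a_j=a_{\ell_0}$ and every $a_n$ with $n\le k_0$ is negative, one gets $j_\ast>k_1$; moreover $\widetilde{f}_0$ fixes this value, so $a^0_{j_\ast}=a_{\ell_0}+1\in[m_1+1,a_{\ell_0}+1]_{\mathbb{Z}}$ (note $m_1\le a_{\ell_0}$), and the maximality in the definition of $k_2$ forces $j_\ast\le k_2$. Hence $j_\ast\in I_1$, and recalling from the preceding remark that $a^0_{\ell_1}=a_{\ell_1}$, we obtain $a_{\ell_1}=\max_{j\in I_1}a^0_j\ge a^0_{j_\ast}=a_{\ell_0}+1$.

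Next I would produce the ``$+1$'' index. Let $j$ satisfy $a_j=a_{\ell_1}+1$. Since $a_{\ell_1}+1>a_{\ell_0}+1$, the map $\widetilde{f}_0$ fixes this value too, so $a^0_j=a_{\ell_1}+1$; this strictly exceeds $\max a^0$ over $I_0\cup I_1=[k_0+1,k_2]_{\mathbb{Z}}$ and is positive, while $a^0_n<0$ for $n\le k_0$, so $j>k_2$. Applying the three point condition $(\ref{3PC})$ to the triple $\ell_1\le k_2<j$ and using $|a_{\ell_1}-a_j|=1$ gives $|a_{\ell_1}-a_{k_2}|\le\lambda$. Finally, inverting $\widetilde{f}_0$ shows $a_{k_2}\le a_{\ell_0}+1$, because $\widetilde{f}_0^{-1}$ sends $[m_1+1,a_{\ell_0}+1]_{\mathbb{Z}}$ into $(0,a_{\ell_0}+1]_{\mathbb{Z}}$ (the positive gaps $d_j$ land in $(0,a_{\ell_0}]$ and $a_{\ell_0}+1$ is fixed). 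Thus $a_{\ell_0}+1\le a_{\ell_1}$ and $a_{\ell_1}-(a_{\ell_0}+1)\le a_{\ell_1}-a_{k_2}\le\lambda<\lambda'$, which proves the first inequality. For the bound on $|I_1|$, I would observe that $\{a^0_n\}_{n\in I_1}$ consists of $|I_1|$ distinct integers whose minimum is at least $m_1+1$ (as $I_1\subset\mathbb{Z}_{>k_1}$) and whose maximum is $a^0_{\ell_1}=a_{\ell_1}$, so $|I_1|\le a_{\ell_1}-(m_1+1)+1=a_{\ell_1}-m_1$. Since $m_1\ge 0$ (because $c_0=0$) and $a_{\ell_1}\le a_{\ell_0}+1+\lambda'\le 2\lambda'+1$ by Claim \ref{claim1}, this gives $|I_1|\le 2\lambda'+1$.

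The delicate point, and the main obstacle, is that the three point condition is available only for the original sequence $A$, whereas $\ell_1$ and $k_2$ are defined through $A_0$. The argument survives precisely because the two values it actually compares, $a_{\ell_1}+1$ and $a_{\ell_0}+1$, lie in the region where $\widetilde{f}_0$ is the identity, so $a^0=a$ there; for $k_2$ one needs only the weaker one-sided estimate $a_{k_2}\le a_{\ell_0}+1$ rather than $a_{k_2}=a^0_{k_2}$. Consequently the step requiring the most care is locating the indices, namely verifying $j_\ast\in I_1$ and $j>k_2$.
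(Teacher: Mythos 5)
Your proof is correct and follows essentially the same route as the paper: choose $j$ with $a_j=a_{\ell_1}+1$, show $j>k_2$, apply the three point condition to the triple $\ell_1\le k_2<j$ together with $a_{k_2}\le a_{\ell_0}+1$, and then bound $|I_1|$ by counting the integers in $[m_1+1,a_{\ell_1}]_{\mathbb{Z}}$. The only difference is that you spell out two details the paper leaves implicit (namely that $a^0_{\ell_1}\ge a_{\ell_0}+1$, so the remark $a^0_{\ell_1}=a_{\ell_1}$ really applies, and the justification of $a_{k_2}\le a_{\ell_0}+1$ via the preimages of $\widetilde{f}_0$), which is a matter of added care rather than a different approach.
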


\begin{proof}
	Let $j\in \mathbb{Z}$ satisfy $a_j^0=a_{\ell_1}^0+1=a_{\ell_1}+1$.
	Then $j>k_2(\geq \ell_1)$. Further by the preceding remark, it holds $a_j^0=a_j$.
	By the definition of $\widetilde{f}_0$, we have $a_{\ell_0}+1\geq a_{k_2}^0
	=\widetilde{f}_0(a_{k_2})\geq a_{k_2}$. Therefore,
	\[
		\lambda'\geq \left|\frac{a_{\ell_1}-a_{k_2}}{a_{\ell_1}-a_j}\right|
		\geq|a_{\ell_1}-(a_{\ell_0}+1)|.
	\]
	Moreover by Claim \ref{claim1}, we have
	\begin{eqnarray*}
		|I_1|\leq |[m_1+1,a_{\ell_1}]_{\mathbb{Z}}|&=& a_{\ell_1}-m_1\\
			&\leq &a_{\ell_1}-(a_{\ell_0}+1)+a_{\ell_0}+1 \leq 2\lambda' +1.
	\end{eqnarray*}
\end{proof}

Similarly to Step1, we sort the interval $[a_{\ell_0}+1,a_{\ell_1}]_{\mathbb{Z}}$
appropriately by a global quasiconformal mapping. Let
$\{c_n^1\}_{n=a_{\ell_0}+1}^{m_2}$ be the unique sequence such that
\[
	\left\{
		\begin{array}{l}
			\{c_n^1\}_{n=a_{\ell_0}+1}^{m_2}=[a_{\ell_0}+1,a_{\ell_1}]_{\mathbb{Z}}
			\cap \{a_n^0\}_{n\in I_1},\vspace{1ex}\\
			a_{\ell_0}+1=c_{a_{\ell_0}+1}^1<c_{a_{\ell_0}+2}^1<\cdots<c_{m_2-1}^1
			<c_{m_2}^1=a_{\ell_1},
		\end{array}
	\right.
\]
and let $\{d_n^1\}_{n=1}^{p_2}$ be the unique sequence such that
\[
	\left\{
		\begin{array}{l}
			\{d_n^1\}_{n=1}^{p_2}=[a_{\ell_0}+1,a_{\ell_1}]_{\mathbb{Z}}
			\setminus \{a_n^0\}_{n\in I_1},\vspace{1ex}\\
			d_1^1<d_2^1<\cdots<d_{p_2}^1.
		\end{array}
	\right.
\]
Again, we remark that $p_2$ is automatically determined by the
equation $m_2+p_2=a_{\ell_1}$, and if $p_2=0$, we assume
$\{d_n^1\}_{n=1}^{p_2}$ is empty.
By Lemma \ref{L1} and Claim \ref{claim2}, there exists a $K-$quasiconformal
mapping $\widetilde{f}_1:\mathbb{C}\rightarrow\mathbb{C}$ which satisfies
\begin{enumerate}
	\item $\widetilde{f}_1=id$ on $\mathbb{C}\setminus
		\left\{ z\in\mathbb{C};\ \displaystyle 
		a_{\ell_0}+\frac{1}{2}<\re z< a_{\ell_1}+\frac{1}{2},\ |\im z|< \delta
		\right\}$,
	\item $\widetilde{f}_1(c_j)=j\hspace{8.5ex} (j=a_{\ell_0}+1,
		a_{\ell_0}+2,\cdots,m_2)$,\vspace{1ex}\\
		$\widetilde{f}_1(d_j)=m_2+j\ \ \ (j=1,2,\cdots,p_2)$,
\end{enumerate}
where $K$ is the same constant appeared in the construction of $\widetilde{f}_0$.

\begin{figure}[htbp]
       \begin{center}
           \includegraphics[width=11cm]{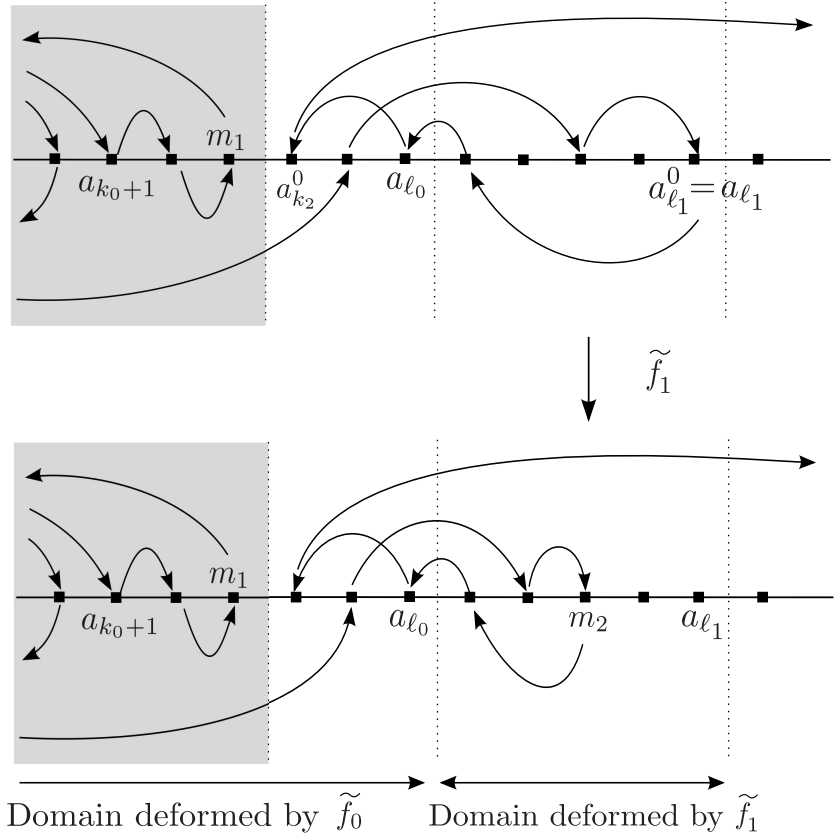} 
       \end{center}
       \caption{:　permutation by $\widetilde{f}_1$}   \label{pT13}
\end{figure}

Let $A_1:=\{a_n^1:=\widetilde{f}_1(a_n^0)\}_{n\in\mathbb{Z}}$. Then intervals
$I_0=[k_0+1,k_1]_{\mathbb{Z}}$ and $I_1=[k_1+1,k_2]_{\mathbb{Z}}$ split $A_1$,
and $|I_0|,\ |I_1|\leq 2\lambda'+1=2\lambda +3$ hold.
Furthermore, since $\widetilde{f}_0$ and $\widetilde{f}_1$ deform disjoint domains, 
the maximal dilatation of $\widetilde{f}_1 \circ \widetilde{f}_0$ 
does not increase. Namely, 
$\widetilde{f}_1 \circ \widetilde{f}_0$ is also $K-$quasiconformal (see Figure 
\ref{pT13}).

\subsection*{Step3}
Applying Step2 repeatedly, for each $m\geq 2$, we can construct
an interval $I_m=[k_m+1,k_{m+1}]_{\mathbb{Z}}$,
a $K-$quasiconformal mapping $\widetilde{f}_m:\mathbb{C}
\rightarrow \mathbb{C}$, and a sequence 
$A_m=\{a_n^m:=\widetilde{f}_m(a_n^{m-1})\}_{n\in\mathbb{Z}}$ such that
\begin{itemize}
	\item intervals $I_0,\ I_1,\cdots,\ I_m$ split $A_m$,\vspace{1ex}
	\item $|I_0|,\ |I_1|,\cdots,\ |I_m|\leq 2\lambda+3$,\vspace{1ex}
	\item $\widetilde{f}_j$ are identity on $\mathbb{C}\setminus
		\{z;\ |\im z|< \delta\}$.
\end{itemize}
Furthermore, by the construction of $\widetilde{f}_j$, the mappings
$\widetilde{f}_j\ (j=0,1,\cdots,m)$ deform disjoint domains. Thus
$\widetilde{f}_m\circ \cdots \circ \widetilde{f}_0$ converges to
a $K-$quasiconformal mapping uniformly on each compact subset 
of $\mathbb{C}$ as $m\rightarrow +\infty$.

Further, we can apply the same argument to the negative direction
of $\{a_n\}_{n\in\mathbb{Z}}$. Consequently we have a desired
$K-$quasiconformal mapping $\widetilde{f}:\mathbb{C}\rightarrow
\mathbb{C}$.
\hfill $\Box$

    \medskip
\section{Characterization of quasisymmetric images}
\label{images}

In this section, we characterize subsets $E\subset \mathbb{R}$
which are images of some quasisymmetric embeddings
$f:\mathbb{Z}\rightarrow \mathbb{R}$. On the other hand,
the author have characterized images of quasiconformal
mappings as follows; 

\begin{defthm} \label{oldthm}
	{\rm (F. 2015 \cite[Theorem A]{fujino2})}
	For a subset $E\subset \mathbb{R}$, the following conditions are 
	quantitatively equivalent.
	\begin{enumerate}
		\item There exists a $K-$quasiconformal mapping $F:\mathbb{C}\rightarrow\mathbb{C}$,
			such that $F(\mathbb{Z})=E$.
		\item $E$ can be written as a monotone increasing sequence $E=\{a_n\}_{n\in \mathbb{Z}}$
			with $a_n\rightarrow\pm \infty\ (n\rightarrow\pm \infty)$, and
			there exists a constant $M\geq 1$ such that the following inequality holds 
			for all $n\in \mathbb{Z}$ and $k\in \mathbb{N}$;
			\[
				\frac{1}{M} \leq \frac{a_{n+k}-a_n}{a_n-a_{n-k}} \leq M.
			\]
	\end{enumerate}
	Further, if $E$ satisfies the second condition, 
	there exists a quasiconformal mapping $F:\mathbb{C}\rightarrow\mathbb{C}$ 
	such that $F(n)=a_n$ for all $n\in \mathbb{Z}$.
\end{defthm}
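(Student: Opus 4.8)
The plan is to prove $(1)\Leftrightarrow(2)$ by treating the two implications separately, arranging the constructive direction $(2)\Rightarrow(1)$ so that it also yields the stronger conclusion $F(n)=a_n$ stated at the end. For $(2)\Rightarrow(1)$, assume $E=\{a_n\}_{n\in\mathbb{Z}}$ is increasing with $a_n\to\pm\infty$ and satisfies the two-sided bound with constant $M$. First I would show that this bound forces the increasing enumeration $h\colon\mathbb{Z}\to\mathbb{R}$, $h(n)=a_n$, to be quasisymmetric: the case $k=1$ makes consecutive gaps comparable, and iterating the estimate from scale $k$ to scale $2k$ (a dyadic decomposition) upgrades this to a genuine three-point bound $|a_q-a_p|/|a_r-a_p|\le\eta(|q-p|/|r-p|)$ with $\eta$ depending only on $M$. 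Extending $h$ by linear interpolation on each $[n,n+1]$ gives an increasing homeomorphism $H\colon\mathbb{R}\to\mathbb{R}$, and the comparability of neighbouring gaps together with the discrete three-point bound shows that $H$ is quasisymmetric on $\mathbb{R}$ with control depending only on $M$. I would then invoke the Beurling--Ahlfors theorem \cite{beurling1} to extend $H$ to a $K(M)$-quasiconformal map of the upper half plane fixing $\mathbb{R}$ pointwise, and reflect across $\mathbb{R}$ to obtain a global quasiconformal $F\colon\mathbb{C}\to\mathbb{C}$ with $F|_{\mathbb{R}}=H$. In particular $F(n)=a_n$ and $F(\mathbb{Z})=E$, which settles $(2)\Rightarrow(1)$ together with the final assertion.

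For $(1)\Rightarrow(2)$ I would start from the fact that a $K$-quasiconformal self-map of $\mathbb{C}$ is $\eta$-quasisymmetric with $\eta=\eta(K)$ (the egg-yolk principle, \cite[Theorem 11.14]{heinonen1}), and that $F$ extends to fix $\infty$, so $|F(n)|\to\infty$; since $F(\mathbb{Z})\subset\mathbb{R}$ is discrete, this lets me write $E$ as a bi-infinite increasing sequence $\{a_n\}$ with $a_n\to\pm\infty$. The heart of the matter is the two-sided estimate for the \emph{sorted} sequence. Writing $p,q,r$ for the ($\mathbb{Z}$-valued) preimages of $a_{n-k},a_n,a_{n+k}$, quasisymmetry of $F$ gives $\frac{a_{n+k}-a_n}{a_n-a_{n-k}}\le\eta(|r-q|/|q-p|)$, so it would suffice to bound $|r-q|/|q-p|$ from above and below; but because $F$ need not respect the order of $\mathbb{Z}$, these preimages can be scrambled, and they cannot be controlled by naive counting. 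Instead I would argue geometrically in two dimensions: for the concentric disks $D(a_n,a_n-a_{n-k})\subset D(a_n,a_{n+k}-a_n)$ the separating round annulus has modulus $\tfrac1{2\pi}\log\frac{a_{n+k}-a_n}{a_n-a_{n-k}}$, and its image under $F^{-1}$ is a ring domain whose modulus is comparable within the factor $K$. Comparing this preimage modulus against the combinatorics of $\mathbb{Z}$---the inner disk already captures the $k+1$ integers $F^{-1}(a_{n-k}),\dots,F^{-1}(a_n)$, while the outer boundary passes through $F^{-1}(a_{n+k})$---I would extract a bound on $\frac{a_{n+k}-a_n}{a_n-a_{n-k}}$ depending only on $K$, and the symmetric configuration gives the reverse bound.

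The step I expect to be the main obstacle is exactly this last one, $(1)\Rightarrow(2)$. The difficulty is that $F$ can reverse or mix the order of $\mathbb{Z}$ (the same phenomenon that the splittability analysis of Sections \ref{observation}--\ref{proofT1} is designed to tame), so quasisymmetry of $F$ alone does not pin down the spacing of the sorted image; one must feed in the uniform distribution of $\mathbb{Z}$ through a genuinely planar estimate---the modulus of an annulus, or equivalently a capacity and point-counting comparison---to convert ``$k$ points on each side'' into comparable Euclidean scales. The other direction is, by contrast, routine: $(2)\Rightarrow(1)$ is a clean composition of the standard doubling-implies-quasisymmetry lemma with the Beurling--Ahlfors extension, and in both directions the constants produced are quantitative functions of $M$, respectively $K$, as demanded by the ``quantitatively equivalent'' assertion.
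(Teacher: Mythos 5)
The statement you were asked to prove is not actually proved in this paper: it is imported verbatim from \cite[Theorem A]{fujino2}, and the closest material given here is Section \ref{images} (Lemmas \ref{L4.1} and \ref{L4.2}), which establishes the analogous facts for quasisymmetric bijections by purely one-dimensional counting. Measured against that, your direction $(2)\Rightarrow(1)$ is essentially the right and standard route: upgrade the $M$-condition at integer centers and radii to a symmetric three-point condition for the piecewise-linear interpolation $H$ (this dyadic bookkeeping is real work, but it is true), apply Beurling--Ahlfors \cite{beurling1}, and glue with the conjugate-reflected extension in the lower half-plane, using removability of the line; this yields $F(n)=a_n$ and hence the final assertion. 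One slip of phrasing: the Beurling--Ahlfors extension has boundary values $H$, it does not ``fix $\mathbb{R}$ pointwise.''

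The genuine gaps are both in $(1)\Rightarrow(2)$. First, the enumeration step: discreteness of $E$ together with $|F(n)|\to\infty$ does \emph{not} imply that $E$ is unbounded above and below; a priori $E$ could lie in a half-line, so that its increasing enumeration is indexed by $\mathbb{N}$ only. This is not a topological triviality --- there are homeomorphisms of $\mathbb{C}$ carrying $\mathbb{Z}$ onto $\mathbb{Z}_{\geq 0}$ --- so ruling it out genuinely uses quasisymmetry; it is exactly the content of Lemma \ref{L4.1}, whose proof is a non-trivial pigeonhole/ordering argument for $g=f^{-1}$, and you assert it in passing. Second, the modulus argument does not close. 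A Teichm\"uller-type estimate bounds the modulus of the preimage ring $F^{-1}(A)$ by roughly $\frac{1}{2\pi}\log\bigl(C\bigl(1+\mathrm{dist}(E_0,E_1)/\mathrm{diam}\,E_0\bigr)\bigr)$, where $E_0$ is the bounded complementary component (whose diameter is indeed $\geq k$) and $E_1$ the unbounded one. To extract a bound on the ratio you need a point of $E_1$ within distance comparable to $k$ of $E_0$; but the only point of $E_1$ you identify is the integer $F^{-1}(a_{n+k})$, and nothing controls its distance in $\mathbb{Z}$ from the $k+1$ integers $F^{-1}(a_{n-k}),\dots,F^{-1}(a_n)$ --- this is precisely the scrambling you set out to tame, so the estimate is in terms of an uncontrolled quantity. (Attempting instead to insert a fat round annulus inside $F^{-1}(A)$ and count the integers it must contain also fails to close directly, since those integers are preimages of points of $E\cap A$, and $E\cap A$ may contain arbitrarily many points of $E$ lying below $a_{n-k}$.)

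The irony is that the ``naive counting'' you dismissed is exactly what works, and is what the paper (and \cite{fujino2}) uses: $g=F^{-1}|_E:E\to\mathbb{Z}$ is $\eta'$-quasisymmetric with $\eta'(t)=1/\eta^{-1}(1/t)$; the $k-1$ points of $E$ strictly between $a_n$ and $a_{n+k}$ have distinct integer images, so some $m$ with $n<m<n+k$ satisfies $|g(a_m)-g(a_n)|\geq (k-1)/2$, and quasisymmetry of $g$ applied to the triple $(a_n,a_m,a_{n+k})$ gives $|g(a_{n+k})-g(a_n)|\geq (k-1)/(2\mu)$ with $\mu=\eta'(1)$, while comparability of consecutive images plus the triangle inequality gives the upper bound $2\mu k$ (Claims 1--2 in the proof of Lemma \ref{L4.2}). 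Feeding the resulting two-sided bound on the preimage ratios back through the quasisymmetry of $F$ --- exactly as in the proof of Theorem \ref{T4.1} --- yields condition (2). Replacing your planar annulus estimate by this one-dimensional pigeonhole argument, and adding the Lemma \ref{L4.1} step, repairs the proof.
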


We will see that the above conditions are desired characterizations.
To see this, we can use almost the same proof as \cite[Theorem A]{fujino2}.
However, we would like to give proofs here for completeness and convenience.
First, we prepare some preliminary lemmas.

\begin{defremark}
	If $E\subset \mathbb{R}$ is an image of a quasisymmetric mapping $f:\mathbb{Z}
	\rightarrow \mathbb{R}$, since quasisymmetric mappings take Cauchy sequences 
	to Cauchy sequences, $E$ must be closed and discrete in $\mathbb{R}$.
\end{defremark}

\begin{deflem} \label{L4.1}
	Let $f:\mathbb{Z}\rightarrow \mathbb{R}$ be an $\eta-$quasisymmetric mapping,
	and let $E:=f(\mathbb{Z})$. Then $\sup E=+\infty$ and $\inf E=-\infty$.
\end{deflem}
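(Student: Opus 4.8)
The plan is to show that the image $E=f(\mathbb{Z})$ of an $\eta$-quasisymmetric map is unbounded in both directions by exploiting the uniform control that quasisymmetry imposes on consecutive ratios of gaps. The key observation is that the three-point inequality \eqref{qs}, applied to consecutive integers, forces comparable consecutive gaps in the image, and then a telescoping argument shows the partial sums of these gaps must diverge. I would argue by contradiction: if, say, $\sup E<+\infty$, then the increments $|f(n+1)-f(n)|$ must be summable as $n\to+\infty$, and I will derive a contradiction with quasisymmetry.

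First I would record the elementary consequence of \eqref{qs} for consecutive points. For any integer $n$, taking $x=n$, $y=n+1$, $z=n-1$ in the quasisymmetry inequality gives
\[
	\left|\frac{f(n)-f(n+1)}{f(n)-f(n-1)}\right|
	\leq \eta\!\left(\left|\frac{n-(n+1)}{n-(n-1)}\right|\right)=\eta(1),
\]
and interchanging the roles of $y$ and $z$ gives the reverse bound $\eta(1)^{-1}$. Writing $g_n:=|f(n+1)-f(n)|$ for the $n$-th gap, this says
\[
	\frac{1}{\eta(1)}\leq \frac{g_n}{g_{n-1}}\leq \eta(1)
	\qquad\text{for all } n\in\mathbb{Z}.
\]
Thus consecutive gaps are comparable with a fixed constant depending only on $\eta$.

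Next I would use this comparability to force divergence of the partial sums. Since $g_{n}\geq g_{0}/\eta(1)^{\,n}$ is the wrong direction, the cleaner route is to bound $f(N)-f(0)$ from below directly: I want to show $\sup E=+\infty$, so suppose instead $E$ is bounded above. Because $f$ is injective and quasisymmetric, I would first note that the sequence $\{f(n)\}_{n\geq 0}$ cannot have a bounded monotone tail without the gaps $g_n\to 0$; but $g_n/g_{n-1}\geq \eta(1)^{-1}$ prevents $g_n$ from decaying faster than geometrically, while boundedness of $E$ would force $\sum_n g_n<\infty$ and hence $g_n\to 0$ at least at a summable rate. The tension to resolve is that quasisymmetry controls \emph{ratios} of distances over \emph{arbitrary} triples, not merely consecutive ones; applying \eqref{qs} with a triple $n<m<k$ whose indices are spread far apart, together with the fact that $|(n-m)/(n-k)|$ can be made close to $1$, pins the far-apart image distances to be comparable, which is incompatible with the image clustering near its supremum. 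I would make this precise by choosing, under the boundedness hypothesis, indices so that $f(m)$ is near $\sup E$ while $f(k)$ is even nearer, yet the domain ratio stays bounded, contradicting the lower estimate $|f(x)-f(y)|/|f(x)-f(z)|\geq \eta(|\,(x-y)/(x-z)\,|^{-1})^{-1}$.

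\textbf{The main obstacle} I anticipate is organizing the contradiction cleanly: quasisymmetry alone does not immediately say $f$ is monotone, so $E$ need not be listed in increasing order by $n$, and I must argue unboundedness of the \emph{set} $E$ rather than of the sequence $\{f(n)\}$. The cleanest fix is to work with the lower quasisymmetry estimate and a well-chosen triple: pick three indices $n_1<n_2<n_3$ with $|n_1-n_2|$ comparable to $|n_1-n_3|$ (for instance $n_2-n_1=n_3-n_2$), so that the domain ratio lies in a fixed compact subinterval of $(0,\infty)$; then $|f(n_1)-f(n_2)|$ and $|f(n_1)-f(n_3)|$ are forced to be comparable by both bounds in \eqref{qs}. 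Letting the common index gap grow while keeping this ratio fixed, and using that a bounded set $E$ must have images accumulating, I reach the contradiction that two comparable-distance triples cannot all fit inside a bounded interval as the combinatorial spread grows. The symmetric argument, using $z\mapsto -z$ or simply reflecting the inequalities, then yields $\inf E=-\infty$, completing the proof.
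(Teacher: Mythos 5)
Your proposal has a genuine gap, and it sits exactly where the lemma's real content lies. The final contradiction you invoke --- that ``two comparable-distance triples cannot all fit inside a bounded interval as the combinatorial spread grows'' --- does not follow from anything you establish. All of your estimates (consecutive-gap comparability, and comparability of $|f(n_1)-f(n_2)|$ with $|f(n_1)-f(n_3)|$ for equally spaced triples) are ratio estimates, hence scale-invariant: they constrain quotients of image distances, never their absolute size. Nothing prevents every such triple from having both image distances minuscule, so arbitrarily many of them fit inside any bounded interval, and no contradiction with $\sup E<+\infty$ can be extracted this way. An absolute length scale has to enter the argument, and it can only come from the target of the inverse map $g=f^{-1}$: distinct points of $\mathbb{Z}=g(E)$ are at distance at least $1$, and every integer has a neighbour at distance exactly $1$. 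The paper's proof uses precisely this, producing a triple $a_k<a_\ell<a_m$ in $E$ with $|g(a_k)-g(a_m)|=1$ while $|g(a_k)-g(a_\ell)|>\mu:=\eta'(1)$, where $\eta'$ is the control function of $g$; this violates the quasisymmetry of $g$.

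Second, you conflate ``bounded above'' with ``bounded.'' Your fallback --- ``a bounded set $E$ must have images accumulating'' --- is only available when $E$ is bounded in both directions, and that accumulation case is in fact the easy one: a Cauchy sequence in $E$ with distinct terms would be sent by the quasisymmetric inverse $g$ to a Cauchy sequence of distinct integers, which is absurd (this is the paper's remark that $E$ is closed and discrete). The case your sketch never touches is $\sup E<+\infty$ together with $\inf E=-\infty$, e.g.\ the question of whether $\mathbb{Z}$ admits a quasisymmetric bijection onto $\mathbb{Z}_{\leq 0}$. There $E$ is closed and discrete, nothing accumulates, and the obstruction is combinatorial: $f$ would have to fold the two ends of $\mathbb{Z}$ into the single end of $E$. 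The paper rules this out by writing $E=\{a_n\}_{n\in\mathbb{N}}$ in increasing order and applying a pigeonhole argument to $g=f^{-1}$: the indices where the running maximum of $g$ is attained and exceeds $\mu$ form an infinite set (since $g$ is onto $\mathbb{Z}$), these interleave with indices where $g<0$ (also infinitely many), and between two consecutive running maxima one finds the bad triple above. Some argument of this kind --- using surjectivity of $g$ onto both ends of $\mathbb{Z}$ together with the unit-gap structure of $\mathbb{Z}$ --- is unavoidable, and your proposal contains no substitute for it. (Your opening telescoping claim, that $\sup E<+\infty$ forces $\sum_n g_n<\infty$, is also false without monotonicity of $\{f(n)\}$, but you flagged and abandoned that route yourself.)
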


\begin{proof}
	To obtain a contradiction, we assume $\inf E>-\infty$. Since $E$ is closed 
	and discrete, we have $\sup E=+\infty$.
	Thus $E$ can be written as a monotone increasing sequence $E=\{a_n\}_{n\in 
	\mathbb{N}}$ with $a_n\rightarrow +\infty$ as $n\rightarrow +\infty$.
	
	Let $g:=f^{-1}:E\rightarrow \mathbb{Z}$. By translation, we may assume
	$g(a_1)=0$. Further, note that $g$ is $\eta'-$quasisymmetric where
	$\eta'(t)=1/\eta^{-1}(1/t)$. Let $\mu :=\eta'(1)$ and consider the set
	\[
		S:=\left\{k\in\mathbb{N};\ g(a_k)=\max_{j=1,2,\cdots,k} g(a_j)\geq \mu \right\}.
	\]
	Since $g:E\rightarrow \mathbb{Z}$ is bijective,
	$S$ consists of infinitely many elements.
	We number $S=\{k_j\}_{j\in \mathbb{N}}$ in ascending order.
	Then the sequence $\{g(a_{k_j})\}_{j\in\mathbb{N}}\subset \mathbb{Z}$
	is monotone increasing.
	On the other hand, there exist infinitely many $n\in \mathbb{N}$ with $g(a_n)<0$.
	Thus we can find $j, \ell \in \mathbb{N}$ such that $k_j < \ell < k_{j+1}$ and
	$g(a_{\ell})<0$. Moreover since $g(a_n)\leq g(a_{k_j})$ for all 
	$n=1,2,\ldots, k_{j+1}-1$, if $g(a_m)=g(a_{k_j})+1$ then $m\geq k_{j+1}$.
	Consequently we confirmed that there exists $k\in S$ and exist 
	$\ell,m \in \mathbb{N}$ such that
	\begin{itemize}
		\item $k<\ell<m$,
		\item $g(a_{\ell})<0$ and $g(a_m)=g(a_k)+1$ (see Figure \ref{img1}).
	\end{itemize}
	\begin{figure}[htbp]
	       \begin{center}
	           \includegraphics[width=12cm]{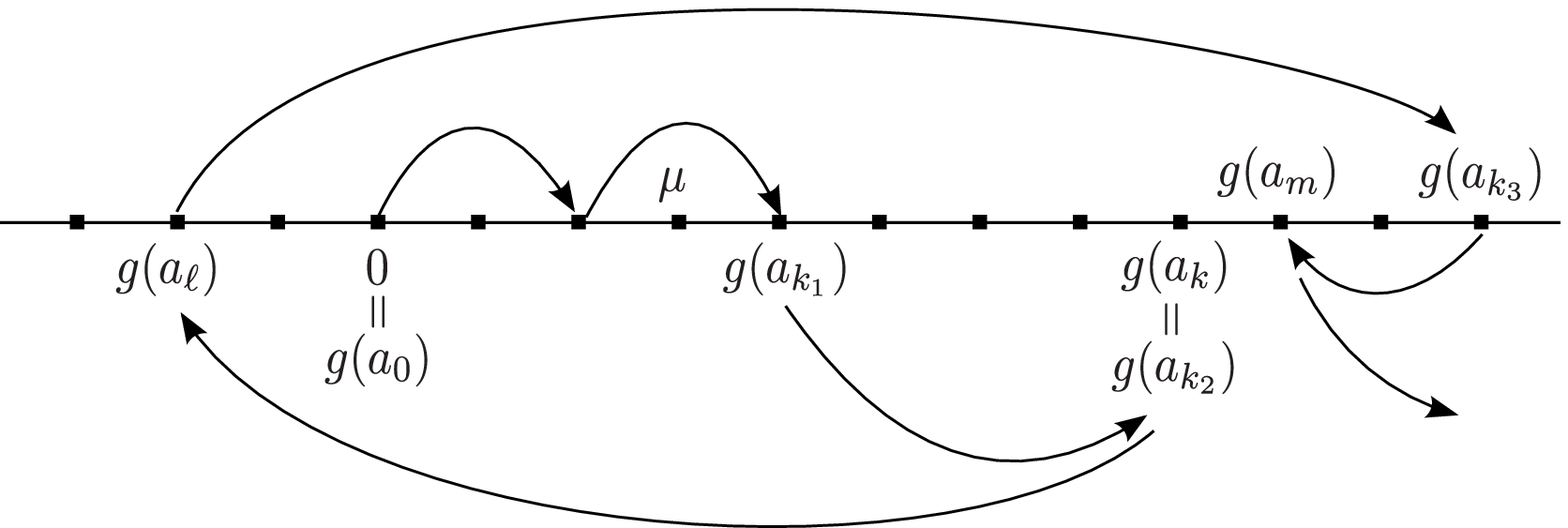} 
	       \end{center}
	       \caption{}   \label{img1}
	\end{figure}
	Therefore, we have a contradiction;
	\begin{eqnarray*}
		\mu > \eta'\left(\left|\frac{a_k-a_{\ell}}{a_k-a_m}\right|\right)
			&\geq &\left| \frac{g(a_k)-g(a_{\ell})}{g(a_k)-g(a_m)}\right|\\
			&=&g(a_k)-g(a_{\ell})>g(a_k)\geq \mu.
	\end{eqnarray*}
\end{proof}

\begin{deflem} \label{L4.2}
	Let $E=\{a_n\}_{n\in\mathbb{Z}}\subset \mathbb{R}$ be a monotone
	increasing sequence with $a_n\rightarrow \pm \infty$ as 
	$n\rightarrow \pm \infty$. If $g:E\rightarrow \mathbb{Z}$
	is an $\eta'-$quasisymmetric bijection, then there exists
	a constant $L\geq 1$ depending only on $\mu:=\eta'(1)$
	which satisfies the following inequality for all 
	$n\in\mathbb{Z}$ and $k\in \mathbb{N}$;
	\[
		\frac{1}{L}< \left|\frac{g(a_{n+k})-g(a_n)}{g(a_n)-g(a_{n-k})}
		\right| < L.
	\]
\end{deflem}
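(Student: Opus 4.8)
The plan is to transport the problem to the $\mathbb{Z}$-side by setting $b_n:=g(a_n)$, to show that $B=\{b_n\}_{n\in\mathbb{Z}}=\mathbb{Z}$ is a bijective sequence obeying a three point condition governed by $\mu$, and then to read off the comparability of the ``forward'' and ``backward'' spans from that condition together with bijectivity. The key elementary observation is that, since $E=\{a_n\}$ is monotone increasing, the indices already encode the order: for $p<q<r$ one has $a_p<a_q<a_r$, hence $0<\frac{a_q-a_p}{a_r-a_p}<1$. Applying the inequality \eqref{qs} to $g$ at the triple $(a_p,a_q,a_r)$ and using that $\eta'$ is an increasing homeomorphism gives
\[
	\left|\frac{b_p-b_q}{b_p-b_r}\right|\le \eta'\!\left(\frac{a_q-a_p}{a_r-a_p}\right)<\eta'(1)=\mu .
\]
Thus $B$ satisfies the $\mu$-three point condition \eqref{3PC}; in particular the symmetric form \eqref{3PC'} holds with constant $\mu+1$, and by Proposition \ref{P1} we have $b_n\to\pm\infty$ or $\mp\infty$ as $n\to\pm\infty$.

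Write $D_+:=|b_{n+k}-b_n|$ and $D_-:=|b_n-b_{n-k}|$, so that the quantity to be estimated is exactly $D_+/D_-$. Under the index reflection $m\mapsto 2n-m$ the sequence $B$ becomes another bijective sequence onto $\mathbb{Z}$ obeying \eqref{3PC'} with constant $\mu+1$, and this reflection interchanges $D_+$ and $D_-$; hence the lower estimate for $D_+/D_-$ is equivalent to an upper estimate of the same shape with a slightly larger constant, and it suffices to produce a one-sided bound $D_+/D_-\le L(\mu)$. The easy half of this is a lower estimate $D_\pm\ge k/(2\mu)$: applying \eqref{3PC} to the triples $(n,n+j,n+k)$ for $0\le j\le k$ shows $|b_{n+j}-b_n|\le \mu D_+$, so the $k+1$ distinct integers $b_n,\dots,b_{n+k}$ all lie in an interval of length $2\mu D_+$; counting forces $k+1\le 2\mu D_++1$, i.e. $D_+\ge k/(2\mu)$, and symmetrically $D_-\ge k/(2\mu)$.

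The main obstacle is the complementary upper estimate $D_+\le C(\mu)\,k$ (equivalently, that the forward and backward spans are comparable), since the three point condition by itself only bounds pieces against the whole span and never bounds a span from below. The plan here is a counting/exclusion argument. Let $b_{n+j^\ast}$ be the point of the forward block farthest from $b_n$, of deviation $H\asymp D_+$. Applying \eqref{3PC} to $(n,n+j^\ast,s)$ for $s>n+j^\ast$ shows that every such $b_s$ avoids the interval of radius $H/\mu$ about $b_n$; the analogous statement for the backward extreme point (using the reflected sequence and \eqref{3PC'}) excludes the indices $s<n-i^\ast$ from a comparable interval. Consequently a subinterval about $b_n$ of length comparable to $\min(D_+,D_-)$ can be filled only by indices in $[n-k,n+k]$, which forces that length to be at most $2k+1$ and hence $\min(D_+,D_-)\le C(\mu)\,k$. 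Promoting this to a bound on $\max(D_+,D_-)$, and thereby closing the comparability, is the delicate point; I expect to handle it by feeding the estimate back through the full quasisymmetry of $g$ and of $g^{-1}$, which sandwich $D_+/D_-$ between $\eta'$ and $1/\eta'$ evaluated at the $E$-side ratio $\frac{a_{n+k}-a_n}{a_n-a_{n-k}}$, so that control on one side transfers to the other. Combining $D_-\ge k/(2\mu)$ with $D_+\le C(\mu)\,k$ then gives $D_+/D_-\le L(\mu)$, and the reflection argument supplies the reciprocal inequality, completing the proof.
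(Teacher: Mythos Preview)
Your lower bound $D_\pm\ge k/(2\mu)$ via the three point condition and a counting argument is correct and essentially matches the paper's Claim~2 (lower half). The gap is in the upper bound.

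Your exclusion argument shows only that an interval about $b_n$ of radius comparable to $\min(D_+,D_-)$ can be populated solely by indices from $[n-k,n+k]$, hence $\min(D_+,D_-)\le C(\mu)\,k$. Combined with $D_\pm\ge k/(2\mu)$ this controls the \emph{small} side, not the ratio: if $D_-\le D_+$ you get $D_-\asymp k$ but learn nothing new about $D_+$. Your proposed ``promotion'' does not close this: invoking the full quasisymmetry of $g$ and $g^{-1}$ sandwiches $D_+/D_-$ by $\eta'$ evaluated at the $E$-side ratio $(a_{n+k}-a_n)/(a_n-a_{n-k})$, but that ratio is not known to be bounded (indeed, bounding it is exactly what this lemma is used for in the proof of Theorem~\ref{T4.1}), so the argument is circular; and if you allow $\eta'$ at arguments other than $1$, your constant no longer depends only on $\mu$.

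The paper supplies the missing idea as its Claim~1: a direct bound on \emph{consecutive} increments, $|b_{n+1}-b_n|<2\mu$, obtained by exploiting bijectivity onto $\mathbb{Z}$ to locate indices $m\le n<n+1\le\ell$ with $b_n\le b_m<b_\ell=b_m+1\le b_{n+1}$, and then applying quasisymmetry to the triple $(a_m,a_n,a_\ell)$ (or $(a_m,a_{n+1},a_\ell)$), where the $a$-ratio is $<1$ and the $b$-denominator is $1$. Summing gives $D_+<2\mu k$ immediately, and the lemma follows with $L=8\mu^2$. Your three point condition framework is fine, but you need this adjacency trick (or something equivalent) to get the upper bound; the exclusion/counting route alone does not reach it.
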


To prove Lemma \ref{L4.2}, first, we prove the following estimation;
\setcounter{defclaim}{0}
\begin{defclaim} \label{subclaim1}
	For any $n\in\mathbb{Z}$, it holds $|g(a_n)-g(a_{n+1})|< 2\mu$.
\end{defclaim}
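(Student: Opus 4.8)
The plan is to argue by contradiction, exploiting that two \emph{consecutive} integers force a denominator equal to $1$ in the quasisymmetry inequality (\ref{qs}). Suppose $g(a_n)<g(a_{n+1})$ (the opposite case follows by composing $g$ with the isometry $k\mapsto -k$ of $\mathbb{Z}$), and set $d:=g(a_{n+1})-g(a_n)$; I assume $d\geq 2\mu$ and seek a contradiction, noting that $d\geq 2$ since $\mu\geq 1$. Because $a_n$ and $a_{n+1}$ are consecutive in the increasing enumeration $E=\{a_m\}_{m\in\mathbb{Z}}$, every point $a_m$ whose image lies strictly between $g(a_n)$ and $g(a_{n+1})$ satisfies either $a_m<a_n$ (call it a \emph{left} point) or $a_m>a_{n+1}$ (a \emph{right} point); and since $g$ is a bijection onto $\mathbb{Z}$, each of the $d-1$ integers in $(g(a_n),g(a_{n+1}))$ is realised as such a value.

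First I would show that both a left and a right point occur. If the integer $g(a_n)+1$ came from a right point $a_m>a_{n+1}$, then (\ref{qs}) applied to $a_n<a_{n+1}<a_m$ with centre $a_n$ gives, since $|a_n-a_{n+1}|/|a_n-a_m|<1$ forces the right-hand side strictly below $\eta'(1)=\mu$,
\[
d=\left|\frac{g(a_n)-g(a_{n+1})}{g(a_n)-g(a_m)}\right|<\mu ,
\]
contradicting $d\geq 2\mu$; hence $g(a_n)+1$ is a left value, and symmetrically, centring at $a_{n+1}$ shows $g(a_{n+1})-1$ is a right value.

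Thus the smallest intermediate integer is a left value and the largest is a right value, so there is a consecutive pair $c,c+1\in(g(a_n),g(a_{n+1}))$ with $c=g(a_p)$ a left value (so $a_p<a_n$) and $c+1=g(a_q)$ a right value (so $a_q>a_{n+1}$). Now I apply (\ref{qs}) twice, each time with denominator $|c-(c+1)|=1$. Centred at $a_q$ and comparing with $a_n$ --- here $a_p<a_n<a_q$, so the ratio of $E$-distances is $<1$ --- it gives $(c+1)-g(a_n)<\mu$, i.e.\ $c<g(a_n)+\mu-1$. Centred at $a_p$ and comparing with $a_{n+1}$ --- here $a_p<a_{n+1}<a_q$, so again the ratio is $<1$ --- it gives $g(a_{n+1})-c<\mu$, i.e.\ $c>g(a_{n+1})-\mu$. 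Together these force $g(a_{n+1})-\mu<g(a_n)+\mu-1$, that is $d<2\mu-1$, contradicting $d\geq 2\mu$ and establishing $|g(a_n)-g(a_{n+1})|<2\mu$.

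The step I expect to be most delicate is the choice of transition: one must use a pair in which a left value is immediately followed in $\mathbb{Z}$ by a right value, and not the reverse, since a right-then-left pair only yields the weaker $d<2\mu+1$. Such a left-then-right pair exists precisely because the type of the intermediate integers is \emph{left} at the bottom and \emph{right} at the top, as established in the previous step. The remaining degenerate cases are immediate: $d\leq 1$ is trivial since $2\mu\geq 2$, and $d=2$ is impossible because its unique intermediate integer would have to be at once the left value $g(a_n)+1$ and the right value $g(a_{n+1})-1$. Ultimately the sharp constant $2\mu$ emerges exactly because consecutive integers produce a denominator equal to $1$ in (\ref{qs}).
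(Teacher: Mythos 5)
Your proof is correct: every application of the quasisymmetry inequality (\ref{qs}) is legitimate (in each case the centre differs from the denominator point and the ratio of distances in $E$ is strictly less than $1$, so the strict bound by $\mu=\eta'(1)$ is justified), the discrete intermediate-value argument does produce a left-then-right transition pair, and the degenerate cases $d\leq 2$ are handled. However, your organization differs genuinely from the paper's. Both proofs turn on the same trick --- a pair of points of $E$ lying on \emph{opposite} sides of the gap between $a_n$ and $a_{n+1}$ whose $g$-values are \emph{consecutive} integers, so that the denominator in (\ref{qs}) equals $1$ --- but the paper produces such a pair directly, with no contradiction argument: it takes $m\leq n$ maximizing $g(a_m)$ over $\{j\leq n:\ g(a_n)\leq g(a_j)<g(a_{n+1})\}$ and lets $\ell$ be the index with $g(a_\ell)=g(a_m)+1$, maximality forcing $\ell\geq n+1$. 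Since that pair is allowed to degenerate ($m=n$ or $\ell=n+1$ are possible), a two-sided sandwich like yours is not available to the paper; instead it splits into two cases according to whether $g(a_m)$ lies in the upper or lower half of $[g(a_n),g(a_{n+1})]$, and in each case a single application of (\ref{qs}) centred at $a_m$ with denominator point $a_\ell$ gives $d/2<\mu$. What the paper's route buys is brevity: no typing of the extreme intermediate integers (your Step 1, which costs two extra quasisymmetry applications and inherently requires arguing by contradiction), no intermediate-value search, and no separate $d=2$ case. What your route buys is a transparent geometric picture (every strictly intermediate integer has a well-defined side, and the side must flip from left at the bottom to right at the top), the avoidance of any case split, and a numerically slightly stronger inequality $d<2\mu-1$ once a strict transition pair exists; your closing observation that the transition must be taken left-then-right rather than right-then-left is a real subtlety, which the paper's maximality construction sidesteps automatically.
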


\begin{proof}
	Since $\mu=\eta'(1)\geq 1$, it suffices to consider the case that
	$|g(a_n)-g(a_{n+1})|\geq 2$. Then we may assume $g(a_{n+1})>g(a_n)$ 
	since the same argument mentioned below can be applied to the case
	$g(a_n)>g(a_{n+1})$.

	Letting $m\in \mathbb{Z}_{\leq n}$ satisfy
	\[
		g(a_m)=\max \left\{ g(a_j);\  j\in \mathbb{Z}_{\leq n} 
		\ \text{such that}\ g(a_n)\leq g(a_j)< g(a_{n+1})\right\}
	\]
	and $\ell \in \mathbb{Z}$ satisfy $g(a_{\ell})=g(a_m)+1$ (
	then $\ell \geq n+1$ by the construction),
	we can construct $m, \ell \in \mathbb{Z}$ which satisfy the following conditions
	(see Figure \ref{img2});
	\begin{enumerate}
		\item $m\leq n$ and $n+1 \leq \ell$,
		\item $g(a_n)\leq g(a_m) < g(a_{\ell})=g(a_m)+1 \leq g(a_{n+1})$.
	\end{enumerate}
	
	
	\begin{figure}[htbp]
	       \begin{center}
	           \includegraphics[width=12cm]{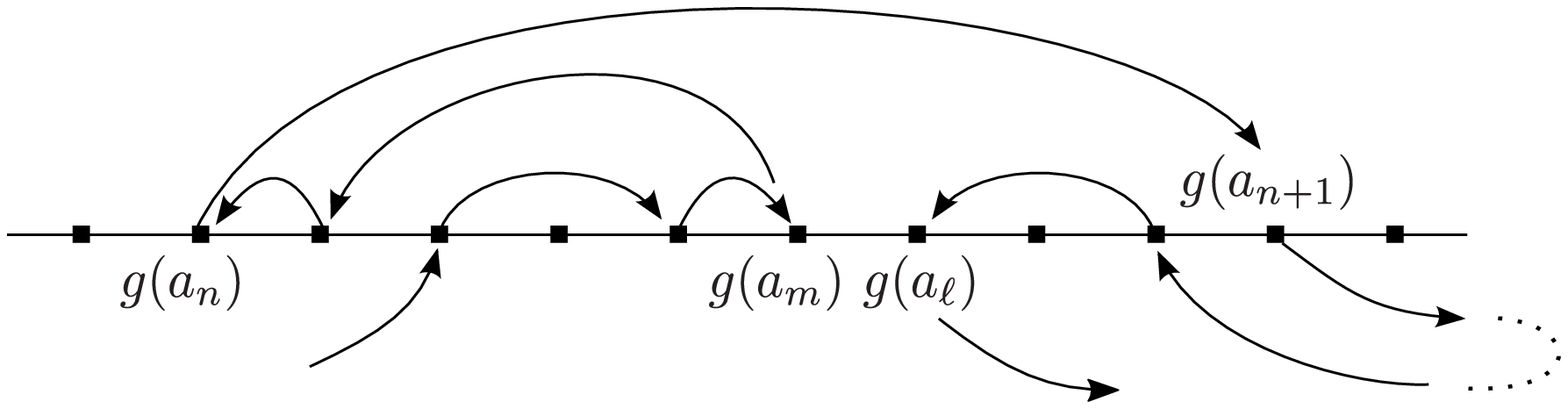} 
	       \end{center}\ \vspace{3ex}
	       \caption{}   \label{img2}
	\end{figure}
	
	First, suppose $g(a_m)-g(a_n)\geq \left( g(a_{n+1})-g(a_n)\right)/2(\geq 1)$. 
	Then $g(a_m),g(a_n),g(a_{\ell})$ are distinct and 
	\begin{eqnarray*}
		\mu &> & \eta'\left(\left|\frac{a_m-a_n}{a_m-a_{\ell}}\right|\right)\\
			&\geq & \left|\frac{g(a_m)-g(a_n)}{g(a_m)-g(a_{\ell})}\right|\\
  			&=& g(a_m)-g(a_n) \geq \frac{g(a_{n+1})-g(a_n)}{2}.
	\end{eqnarray*}
	Thus we have $g(a_{n+1})-g(a_n)< 2\mu$.

	Next, suppose $g(a_m)-g(a_n)< \left( g(a_{n+1})-g(a_n)\right)/2$. 
	Then $g(a_{n+1})-g(a_m)>(g(a_{n+1})-g(a_n))/2$ holds.
	Since $g(a_{n+1})-g(a_n)\geq 2$,
	\[
		g(a_{n+1})-g(a_{\ell})> \frac{g(a_{n+1})-g(a_n)}{2}-1 \geq 0,
	\]
	that is, $\ell \neq n+1$. Therefore $g(a_m),g(a_{n+1}),g(a_{\ell})$ are distinct. 
	Similarly we have $g(a_{n+1})-g(a_n)< 2\mu$.
\end{proof}

\begin{defclaim} \label{subclaim2}
	For any $n\in \mathbb{Z}$ and $k\in\mathbb{N}\ (k\neq 1)$, the following 
	inequality holds;
	\[
		\frac{k-1}{2\mu}< |g(a_n)-g(a_{n+k})|< 2\mu k.
	\]
\end{defclaim}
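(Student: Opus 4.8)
The plan is to treat the two inequalities separately: the upper bound follows at once from Claim \ref{subclaim1} by telescoping, while the lower bound is the substantive part and will use the bijectivity of $g$ together with the quasisymmetry inequality applied at the two endpoints of the block.

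For the upper bound I would write $g(a_n)-g(a_{n+k})=\sum_{j=0}^{k-1}\bigl(g(a_{n+j})-g(a_{n+j+1})\bigr)$ and apply the triangle inequality. By Claim \ref{subclaim1} each of the $k$ summands is strictly smaller than $2\mu$, so $|g(a_n)-g(a_{n+k})|<2\mu k$ immediately.

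For the lower bound, set $D:=|g(a_n)-g(a_{n+k})|$ and, after possibly interchanging the roles of the two endpoints, assume $g(a_n)<g(a_{n+k})$. Let $M$ and $m$ be the maximum and minimum of $g$ over the block $a_n,a_{n+1},\dots,a_{n+k}$, say $M=g(a_{n+p})$ and $m=g(a_{n+q})$ with $0\le p,q\le k$. Since $g$ is injective into $\mathbb{Z}$, the numbers $g(a_n),\dots,g(a_{n+k})$ are $k+1$ distinct integers, so that $M-m\ge k$. The key idea is to control how far $M$ and $m$ can protrude beyond the endpoint values, using quasisymmetry. Because $E$ is monotone increasing and $0\le p\le k$, we have $a_n<a_{n+p}\le a_{n+k}$, hence $|a_n-a_{n+p}|/|a_n-a_{n+k}|\le 1$; applying (\ref{qs}) to $g$ at $x=a_n$, together with the fact that $\eta'$ is increasing with $\eta'(1)=\mu$, gives $M-g(a_n)=|g(a_n)-g(a_{n+p})|\le\mu D$ (the degenerate cases $p=0$ or $p=k$ being harmless, since (\ref{qs}) is also valid when $y=z$). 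Symmetrically, applying (\ref{qs}) at $x=a_{n+k}$ to the minimizing index yields $g(a_{n+k})-m\le\mu D$.

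Combining these estimates through the identity $M-m=(M-g(a_n))+(g(a_{n+k})-m)-D$ gives $k\le M-m\le(2\mu-1)D$, and therefore $D\ge k/(2\mu-1)>(k-1)/(2\mu)$, which is in fact slightly stronger than claimed. I expect the lower bound to be the only delicate step, and within it the decisive observation is that the quantity forced to be large is the integer width $M-m\ge k$ of the block, not $D$ itself; once the two overshoots $M-g(a_n)$ and $g(a_{n+k})-m$ are bounded by $\mu D$ via the endpoint applications of quasisymmetry, the estimate closes with no further computation.
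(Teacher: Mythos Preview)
Your argument is correct, and for the upper bound it coincides with the paper's. For the lower bound, however, the paper proceeds differently: it uses a pigeonhole observation at a \emph{single} endpoint. Since the open interval $\bigl(g(a_n)-\tfrac{k-1}{2},\,g(a_n)+\tfrac{k-1}{2}\bigr)$ contains at most $k-1$ integers, one of which is $g(a_n)$ itself, at least one of the $k-1$ distinct values $g(a_{n+1}),\dots,g(a_{n+k-1})$ must lie outside it; choosing such an index $m$ with $n<m<n+k$ gives $|g(a_n)-g(a_m)|\ge (k-1)/2$, and a single application of quasisymmetry at $x=a_n$ (using $|a_n-a_m|/|a_n-a_{n+k}|<1$) then yields $\mu>|g(a_n)-g(a_m)|/D\ge (k-1)/(2D)$.

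Your route instead looks at the full range $M-m\ge k$ of the block and controls the two overshoots $M-g(a_n)$ and $g(a_{n+k})-m$ via quasisymmetry at \emph{both} endpoints. This costs a second application of (\ref{qs}) but buys a cleaner and slightly sharper bound $D\ge k/(2\mu-1)$, and it avoids the parity-flavoured interval count. The paper's version is marginally shorter and uses only one endpoint, at the price of the weaker constant $(k-1)/(2\mu)$; either proof is perfectly adequate here since only a bound depending on $\mu$ alone is needed downstream.
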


\begin{proof}
	(\textit{Upper bound}) 
	By the triangle inequality, it immediately follows from Claim \ref{subclaim1} that
	$|g(a_n)-g(a_{n+k})|< 2\mu k$.\\

	(\textit{Lower bound}) Suppose $k\neq 1$. 
	Since the open interval
	\[ 
		\left(g(a_n)-\frac{k-1}{2},\ g(a_n)+\frac{k-1}{2}\right)
	\]
	contains at most $(k-1)$ integer points, 
	there exists an integer $m\in \mathbb{Z}\ (n<m<n+k)$ such that
	\[
		|g(a_n)-g(a_m)|\geq \frac{k-1}{2}.
	\] 
	By the quasisymmetry, we obtain
	\begin{eqnarray*}
		\mu &>& \eta'\left(\left|\frac{a_n-a_m}{a_n-a_{n+k}}\right|\right)\\
			&\geq& \left| \frac{g(a_n)-g(a_m)}{g(a_n)-g(a_{n+k})}\right| 
			\geq \frac{k-1}{2|g(a_n)-g(a_{n+k})|},
	\end{eqnarray*}
	that is, $|g(a_n)-g(a_{n+k})|> (k-1)/2\mu$.
\end{proof}

\begin{defclaim} \label{subclaim3}
	Lemma \ref{L4.2} holds.
\end{defclaim}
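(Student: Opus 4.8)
The plan is to derive Lemma~\ref{L4.2} directly from Claims~\ref{subclaim1} and~\ref{subclaim2}, estimating the numerator and the denominator of the target ratio separately and then combining. The key structural observation is that the denominator has exactly the same shape as the numerator: setting $n' := n-k$ gives $g(a_n)-g(a_{n-k}) = g(a_{n'+k})-g(a_{n'})$, so any two-sided bound for $|g(a_{m+k})-g(a_m)|$ that is valid for every $m$ applies simultaneously to both $|g(a_{n+k})-g(a_n)|$ and $|g(a_n)-g(a_{n-k})|$.

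First I would dispose of the case $k=1$. Since $g$ is a bijection onto $\mathbb{Z}$, the values $g(a_{n+1})$ and $g(a_n)$ are distinct integers, whence $|g(a_{n+1})-g(a_n)|\geq 1$; combined with the upper bound $|g(a_{n+1})-g(a_n)|<2\mu$ of Claim~\ref{subclaim1}, this pins both numerator and denominator into $[1,2\mu)$, so the ratio $|g(a_{n+1})-g(a_n)|/|g(a_n)-g(a_{n-1})|$ lies strictly between $1/(2\mu)$ and $2\mu$.

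For $k\geq 2$ I would invoke Claim~\ref{subclaim2} for both the top and the bottom, giving
\[
  \frac{k-1}{2\mu} < |g(a_{n+k})-g(a_n)| < 2\mu k,
  \qquad
  \frac{k-1}{2\mu} < |g(a_n)-g(a_{n-k})| < 2\mu k.
\]
Dividing the extreme estimates yields
\[
  \frac{k-1}{4\mu^2 k}
  < \left|\frac{g(a_{n+k})-g(a_n)}{g(a_n)-g(a_{n-k})}\right|
  < \frac{4\mu^2 k}{k-1}.
\]
Since $k\geq 2$ forces $1/2\leq (k-1)/k<1$, and hence $k/(k-1)\leq 2$, the two outer quantities are dominated by $1/(8\mu^2)$ and $8\mu^2$ respectively, uniformly in $n$ and $k$.

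Finally I would set $L:=8\mu^2$ and verify it works for all $k\in\mathbb{N}$ at once: because $\mu\geq 1$ we have $2\mu<8\mu^2$, so the $k=1$ bounds are absorbed into the required two-sided estimate, while the $k\geq 2$ computation gives it directly (with the strict inequalities surviving, the worst case being $k=2$). Thus $L=8\mu^2$ depends only on $\mu=\eta'(1)$, as demanded. There is no genuine obstacle here; the only point needing care is the integrality argument furnishing the lower bound $\geq 1$ in the case $k=1$, which is precisely what allows a single uniform constant $L$ to cover the small-$k$ regime where Claim~\ref{subclaim2} alone does not suffice.
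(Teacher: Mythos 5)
Your proof is correct and follows essentially the same route as the paper: apply Claim \ref{subclaim2} to both numerator and denominator when $k\geq 2$ (using $k/(k-1)\leq 2$ to get the uniform constant $L=8\mu^2$), and handle $k=1$ separately via Claim \ref{subclaim1} together with the integrality bound $|g(a_{n+1})-g(a_n)|\geq 1$. The paper's version is merely terser, leaving the division of bounds and the integrality observation implicit.
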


\begin{proof}
	If $k\neq 1$, it immediately follows from Claim \ref{subclaim2} that 
	\[
		\frac{1}{L} < \left| \frac{g(a_{n+k})-g(a_n)|}{g(a_n)-g(a_{n-k})} 
		\right| < L
	\]
	for $L=8\mu^2$. Moreover, even if $k=1$, it follows from Claim \ref{subclaim1}
	\[
		8\mu^2>2\mu > \left| \frac{g(a_{n+1})-g(a_n)}{g(a_n)-g(a_{n-1})} 
		\right| > \frac{1}{2\mu} >\frac{1}{8\mu^2}.
	\]
\end{proof}

By the above lemmas, we obtain the following;

\setcounter{alpthm}{2}
\begin{alpthm} \label{T4.1}
	For a subset $E\subset \mathbb{R}$, the following conditions
	are quantitatively equivalent;
	\begin{enumerate}
		\item There exists an $\eta-$quasisymmetric bijection 
			$f:\mathbb{Z}\rightarrow E$.
		\item $E$ can be written as a monotone increasing sequence $E=\{a_n\}_{n\in \mathbb{Z}}$
			with $a_n\rightarrow\pm \infty\ (n\rightarrow\pm \infty)$, and
			there exists a constant $M\geq 1$ such that the following inequality holds 
			for all $n\in \mathbb{Z}$ and $k\in \mathbb{N}$;
			\[
				\frac{1}{M} \leq \frac{a_{n+k}-a_n}{a_n-a_{n-k}} \leq M.
			\]
		\item There exists a $K-$quasiconformal mapping $F:\mathbb{C}\rightarrow\mathbb{C}$,
			such that $F(\mathbb{Z})=E$.
	\end{enumerate}
\end{alpthm}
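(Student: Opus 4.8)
The plan is to treat the equivalence $(2)\Leftrightarrow(3)$ as already available and to splice condition $(1)$ into the picture. Indeed, Theorem \ref{oldthm} asserts exactly that $(2)$ and $(3)$ are quantitatively equivalent, so it suffices to establish $(3)\Rightarrow(1)$ and $(1)\Rightarrow(2)$; this closes the cycle $(1)\Rightarrow(2)\Rightarrow(3)\Rightarrow(1)$ and gives all three equivalences.

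The implication $(3)\Rightarrow(1)$ is immediate. A $K$-quasiconformal self-homeomorphism $F$ of $\mathbb{C}$ is $\eta$-quasisymmetric with an $\eta$ depending only on $K$ (the egg-yolk principle recalled in Section 2), so its restriction $f:=F|_{\mathbb{Z}}:\mathbb{Z}\to E$ is an $\eta$-quasisymmetric bijection with the same $\eta$, which is precisely $(1)$.

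The substance is $(1)\Rightarrow(2)$, and here I would lean on the two lemmas just proved. First I would record that $E$ is closed and discrete in $\mathbb{R}$ (quasisymmetric maps send Cauchy sequences to Cauchy sequences) and that, by Lemma \ref{L4.1}, $\sup E=+\infty$ and $\inf E=-\infty$. Hence $E$ enumerates as a strictly increasing bi-infinite sequence $E=\{a_n\}_{n\in\mathbb{Z}}$ with $a_n\to\pm\infty$, which is the first half of $(2)$. For the quantitative half I would pass to the inverse $g:=f^{-1}:E\to\mathbb{Z}$, which is $\eta'$-quasisymmetric with $\eta'(t)=1/\eta^{-1}(1/t)$, and invoke Lemma \ref{L4.2} to produce a constant $L=L(\eta)$ with $1/L<|(g(a_{n+k})-g(a_n))/(g(a_n)-g(a_{n-k}))|<L$ for all $n\in\mathbb{Z}$ and $k\in\mathbb{N}$.

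The one genuine step is transferring this combinatorial bound on the $g$-values back to a metric bound on the $a_n$. I would apply the quasisymmetry inequality \eqref{qs} for $g$ to the triple $(x,y,z)=(a_n,a_{n-k},a_{n+k})$: since the $a_n$ increase, this reads $|(g(a_n)-g(a_{n-k}))/(g(a_n)-g(a_{n+k}))|\le\eta'\bigl((a_n-a_{n-k})/(a_{n+k}-a_n)\bigr)$, and combining with the lower estimate $>1/L$ from Lemma \ref{L4.2} yields $(a_{n+k}-a_n)/(a_n-a_{n-k})\le 1/(\eta')^{-1}(1/L)=:M$. Running the same computation with $y$ and $z$ interchanged gives the reciprocal bound $(a_{n+k}-a_n)/(a_n-a_{n-k})\ge 1/M$, so $(2)$ holds with a constant $M=M(\eta)$. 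Since every constant along the way ($\eta'$, $L$, $M$, and the $K$ from Theorem \ref{oldthm}) depends only on the preceding data, the equivalence is quantitative, as required. I do not expect a real obstacle here once the lemmas are in hand; the only points needing care are the bookkeeping of the distortion functions $\eta\mapsto\eta'$ and keeping the two applications of \eqref{qs} (with $y,z$ swapped) straight, so that both the upper and the lower inequality of $(2)$ are produced.
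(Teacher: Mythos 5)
Your proposal is correct and follows essentially the same route as the paper: $(2)\Leftrightarrow(3)$ by Theorem \ref{oldthm}, $(3)\Rightarrow(1)$ by the egg-yolk principle, and $(1)\Rightarrow(2)$ by combining Lemmas \ref{L4.1} and \ref{L4.2} with one application of the quasisymmetry inequality. The only cosmetic difference is the final conversion step, where the paper applies the quasisymmetry of $f$ to the integer triple (yielding $M=\eta(L)$) whereas you invert $\eta'$ for $g=f^{-1}$; since $1/(\eta')^{-1}(1/L)=\eta(L)$, the two even produce the same constant.
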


\begin{proof}
	The equivalence $(2)\Leftrightarrow (3)$ is already confirmed by Theorem \ref{oldthm}
	(see \cite[Theorem A]{fujino2}). Further, for the same reason as the proof of 
	Theorem \ref{T1}, $(3)\Rightarrow (1)$ follows. Thus it suffices to show
	$(1)\Rightarrow (2)$.
	
	Let us assume that there exists an $\eta-$quasisymmetric bijection 
	$f:\mathbb{Z}\rightarrow E$. By Lemma \ref{L4.1}, $E$ can be 
	written as a monotone increasing sequence
	$E=\{a_n\}_{n\in\mathbb{Z}}$ with $a_n\rightarrow \pm \infty$ as $n\rightarrow 
	\pm \infty$ (recall $E$ must be closed and discrete in $\mathbb{R}$). 
	Let $g:=f^{-1}$. Then $g$ is $\eta'-$quasisymmetric where
	$\eta'(t)=1/\eta^{-1}(1/t)$. By Lemma \ref{L4.2}, there exists a constant
	$L\geq 1$ depending only on $\eta'(1)=1/\eta^{-1}(1)$ 
	which satisfies the following inequality
	for any $n\in\mathbb{Z}$ and $k\in\mathbb{N}$;
	\[
		\frac{1}{L}< \left|\frac{g(a_{n+k})-g(a_n)}{g(a_n)-g(a_{n-k})}\right|
		< L.
	\]
	Therefore we obtain
	\[
		\left|\frac{a_{n+k}-a_n}{a_n-a_{n-k}}\right|
		\leq \eta\left( \left|\frac{g(a_{n+k})-g(a_n)}{g(a_n)-g(a_{n-k})}
		\right|\right) < \eta(L).
	\]
	and 
	\[
		\left|\frac{a_{n+k}-a_n}{a_n-a_{n-k}}\right|
		\geq \eta\left( \left|\frac{g(a_{n+k})-g(a_n)}{g(a_n)-g(a_{n-k})}
		\right|^{-1}\right)^{-1} > \frac{1}{\eta(L)}.
	\]
\end{proof}

    \medskip
\section{Extensibility of quasisymmetric embeddings}
\label{extension}
 
We would like to complete this paper, proving the following theorem;

\setcounter{alpthm}{0}
\begin{alpthm} \label{T5.1}
	Every $\eta-$quasisymmetric embedding $f:\mathbb{Z}\rightarrow 
	\mathbb{R}$ admits a $K=K(\eta)-$quasiconformal extension
	$\widetilde{f}:\mathbb{C}\rightarrow \mathbb{C}$ where
	$K=K(\eta)$ is a constant depending only on $\eta$.
\end{alpthm}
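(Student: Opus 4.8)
The plan is to factor $f$ through a quasiconformal realization of its image, reducing Theorem \ref{T5.1} to the two cases already settled---Theorem \ref{T4.1} (controlling the image geometry) and Theorem \ref{T3.2} (controlling a self-permutation of $\mathbb{Z}$). First I would set $E:=f(\mathbb{Z})$ and observe that $f$ is an $\eta$-quasisymmetric bijection of $\mathbb{Z}$ onto $E$. By Theorem \ref{T4.1} $(1)\Rightarrow(3)$ this produces a $K_1=K_1(\eta)$-quasiconformal mapping $F:\mathbb{C}\rightarrow\mathbb{C}$ with $F(\mathbb{Z})=E$. Crucially, $F$ need not agree with $f$ on $\mathbb{Z}$: it realizes the same target \emph{set}, but possibly with a different correspondence, and the discrepancy is exactly a self-bijection of $\mathbb{Z}$.

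Next I would form $g:=F^{-1}\circ f:\mathbb{Z}\rightarrow\mathbb{Z}$, which is a well-defined bijection because $F|_{\mathbb{Z}}:\mathbb{Z}\rightarrow E$ is a bijection. The key step is to check that $g$ is quasisymmetric with control depending only on $\eta$. For this I would invoke the egg-yolk principle (\cite[Theorem 11.14]{heinonen1}): the global $K_1$-quasiconformal mapping $F^{-1}$ is $\eta_1$-quasisymmetric on $\mathbb{C}$ with $\eta_1=\eta_1(K_1)$, hence depending only on $\eta$, and its restriction to $E$ inherits the same modulus of quasisymmetry. Since a composition of an $\eta$-quasisymmetric map followed by an $\eta_1$-quasisymmetric map is $(\eta_1\circ\eta)$-quasisymmetric, $g$ is quasisymmetric with control depending only on $\eta$. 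Theorem \ref{T3.2} $(1)\Rightarrow(3)$ then yields a $K_2=K_2(\eta)$-quasiconformal $G:\mathbb{C}\rightarrow\mathbb{C}$ extending $g$, i.e.\ $G(n)=g(n)$ for all $n\in\mathbb{Z}$.

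Finally I would set $\widetilde{f}:=F\circ G$. As a composition of a $K_2$- and a $K_1$-quasiconformal mapping it is $K_1K_2$-quasiconformal, and for each $n$ one has
\[
	\widetilde{f}(n)=F\bigl(G(n)\bigr)=F\bigl(F^{-1}(f(n))\bigr)=f(n),
\]
so $\widetilde{f}$ extends $f$ with $K=K_1K_2=K(\eta)$. I expect no serious obstacle once Theorems \ref{T4.1} and \ref{T3.2} are in hand: the essential difficulties---building the image by a quasiconformal map and realizing a permutation of $\mathbb{Z}$ quasiconformally---have been isolated into those results. The one point demanding care is the quantitative bookkeeping: I must verify that every intermediate modulus of quasisymmetry and every dilatation bound depends on $\eta$ alone, which holds because each is produced by one of those theorems, whose constants are functions of $\eta$ only. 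A secondary subtlety worth flagging is that $f$ may reverse the order of $\mathbb{Z}$, but this causes no trouble, since Theorem \ref{T3.2} covers arbitrary quasisymmetric bijections of $\mathbb{Z}$, including order-reversing ones.
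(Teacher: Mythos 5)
Your proposal is correct and coincides with the paper's own proof: the paper likewise applies Theorem \ref{T4.1} to obtain $F$ with $F(\mathbb{Z})=E$, notes that $F^{-1}\circ f$ is an $\eta'$-quasisymmetric automorphism of $\mathbb{Z}$ with $\eta'$ depending only on $\eta$, extends it via Theorem \ref{T3.2}, and composes. Your additional justification of the quasisymmetry of $F^{-1}\circ f$ via the egg-yolk principle simply makes explicit what the paper's one-line appeal to composition of quasisymmetric maps leaves implicit.
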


\begin{proof}
	Let $f:\mathbb{Z}\rightarrow \mathbb{R}$ be an $\eta-$quasisymmetric
	embedding, and let $E:=f(\mathbb{Z})$. Then, by Theorem \ref{T4.1},
	there exists a $K'-$quasiconformal mapping $F:\mathbb{C}\rightarrow
	\mathbb{C}$ such that $F(\mathbb{Z})=E$, where $K'$ depends only
	on $\eta$. Since compositions of quasisymmetric mappings are
	also quasisymmetric, $F^{-1}\circ f:\mathbb{Z}\rightarrow\mathbb{Z}$
	becomes an $\eta'-$quasisymmetric automorphism where $\eta'$ depends 
	only on $\eta$.	By Theorem \ref{T3.2}, $F^{-1}\circ f$ admits a
	$K''-$quasiconformal extension $G:\mathbb{C}\rightarrow \mathbb{C}$,
	where $K''$ depends only on $\eta$. 
	Therefore, we obtain a $K=K'K''-$quasiconformal extension 
	$\widetilde{f}=F\circ G:\mathbb{C}\rightarrow \mathbb{C}$ of $f$.
	The proof is completed. 
\end{proof}
    


\subsection*{Acknowledgements}
I am deeply grateful to Professor Takeo Ohsawa for his guidance and helpful advices.
This research is partially supported by Grant-in-Aid for JSPS Fellow 16J02185.

\bibliographystyle{amsxport} 


\begin{bibdiv}
\begin{biblist}

\bib{alestalo1}{article}{
      author={Alestalo, P.},
      author={V{\"a}is{\"a}l{\"a}, J.},
       title={{Uniform domains of higher order. {III}}},
        date={1997},
        ISSN={0066-1953},
     journal={Ann. Acad. Sci. Fenn. Math.},
      volume={22},
      number={2},
       pages={445\ndash 464},
      review={\MR{1469802}},
}

\bib{beurling1}{article}{
      author={Beurling, A.},
      author={Ahlfors, L.~V.},
       title={{The boundary correspondence under quasiconformal mappings}},
        date={1956},
        ISSN={0001-5962},
     journal={Acta Math.},
      volume={96},
       pages={125\ndash 142},
      review={\MR{0086869 (19,258c)}},
}

\bib{fujino2}{article}{
      author={Fujino, H.},
       title={{The existence of quasiconformal homeomorphism between planes
  with countable marked points}},
        date={2015},
        ISSN={0386-5991},
     journal={Kodai Math. J.},
      volume={38},
      number={3},
       pages={732\ndash 746},
         url={http://dx.doi.org/10.2996/kmj/1446210604},
      review={\MR{3417531}},
}

\bib{heinonen1}{book}{
      author={Heinonen, J.},
       title={{Lectures on analysis on metric spaces}},
      series={{Universitext}},
   publisher={Springer-Verlag, New York},
        date={2001},
        ISBN={0-387-95104-0},
  url={http://dx.doi.org.ejgw.nul.nagoya-u.ac.jp/10.1007/978-1-4613-0131-8},
      review={\MR{1800917}},
}

\bib{trotsenko1}{article}{
      author={Trotsenko, D.~A.},
      author={V{\"a}is{\"a}l{\"a}, J.},
       title={{Upper sets and quasisymmetric maps}},
        date={1999},
        ISSN={1239-629X},
     journal={Ann. Acad. Sci. Fenn. Math.},
      volume={24},
      number={2},
       pages={465\ndash 488},
      review={\MR{1724387}},
}

\bib{vaisala4}{incollection}{
      author={V{\"a}is{\"a}l{\"a}, J.},
       title={{Questions on quasiconformal maps in space}},
   booktitle={{Quasiconformal mappings and analysis ({A}nn {A}rbor, {MI},
  1995)}},
      review={\MR{1488460}},
}

\bib{vellis1}{unpublished}{
      author={Vellis, V.},
       title={{Quasisymmetric extension on the real line}},
        date={2015},
        note={arXiv:1509.06638 [math.MG]},
}

\end{biblist}
\end{bibdiv}



\end{document}